\newtheorem{theorem}{Theorem}[section]
\newtheorem{lemma}[theorem]{Lemma}
\newtheorem{proposition}[theorem]{Proposition}
\newtheorem{corollary}[theorem]{Corollary}
\theoremstyle{definition}
\newtheorem{definition}[theorem]{Definition}
\newtheorem{remark}[theorem]{Remark}
\newtheorem{claim}[theorem]{Claim}
\newcommand{\ReDeclareMathOperator}[2]{\let#1\relax\DeclareMathOperator{#1}{#2}}
\DeclareMathOperator{\at}{\textit{at}}
\DeclareMathOperator{\pt}{\textit{pt}}
\ReDeclareMathOperator{\int}{int}
\ReDeclareMathOperator{\O}{O}
\ReDeclareMathOperator{\C}{C}
\ReDeclareMathOperator{\cmp}{cmp}
\ReDeclareMathOperator{\CL}{clp}
\ReDeclareMathOperator{\CLP}{Clp}
\ReDeclareMathOperator{\Uf}{Uf}
\ReDeclareMathOperator{\K}{K}
\ReDeclareMathOperator{\S}{Sat}
\ReDeclareMathOperator{\CS}{CSat}
\ReDeclareMathOperator{\KS}{KS}
\ReDeclareMathOperator{\LC}{LC}
\ReDeclareMathOperator{\LO}{CLC}
\ReDeclareMathOperator{\CSAT}{CSAT}
\ReDeclareMathOperator{\SFilt}{SFilt}
\ReDeclareMathOperator{\WLC}{WLC}
\ReDeclareMathOperator{\WLO}{WCLC}
\ReDeclareMathOperator{\GC}{AC}
\ReDeclareMathOperator{\P}{\mathcal{P}}
\newcommand{\categoryname}[1]{\ensuremath{\mathbf{#1}}\xspace}
\newcommand{\DeclareCategory}[2]{\newcommand{#1}{\categoryname{#2}}}
\DeclareCategory{\SMT}{SMT}
\DeclareCategory{\BA}{BA}
\DeclareCategory{\KMT}{KMT}
\DeclareCategory{\KHMT}{KHMT}
\DeclareCategory{\HMT}{HMT}
\DeclareCategory{\SobMT}{SobMT}
\DeclareCategory{\SobSMT}{SobSMT}
\DeclareCategory{\LCSobMT}{LCSobMT}
\DeclareCategory{\LCSobSMT}{LCSobSMT}
\DeclareCategory{\StLCSMT}{StLCSMT}
\DeclareCategory{\StLCMT}{StLCMT}
\DeclareCategory{\LCHMT}{LCHMT}
\DeclareCategory{\StKMT}{StKMT}
\DeclareCategory{\StKSMT}{StKSMT}
\DeclareCategory{\MT}{MT}
\DeclareCategory{\LStoneMT}{LStoneMT}
\DeclareCategory{\StoneMT}{StoneMT}
\DeclareCategory{\ZDMT}{ZMT}
\DeclareCategory{\StoneFrm}{StoneFrm}
\DeclareCategory{\StCFrm}{StCFrm}
\DeclareCategory{\StKFrm}{StKFrm}
\DeclareCategory{\Stone}{Stone}
\DeclareCategory{\LStone}{LStone}
\DeclareCategory{\LStoneFrm}{LStoneFrm}
\DeclareCategory{\ZDFrm}{ZFrm}
\DeclareCategory{\Frm}{Frm}
\DeclareCategory{\BFrm}{BFrm}
\DeclareCategory{\CFrm}{ContFrm}
\DeclareCategory{\SFrm}{SFrm}
\DeclareCategory{\RegFrm}{RegFrm}
\DeclareCategory{\CRegFrm}{ContRegFrm}
\DeclareCategory{\KRegFrm}{KRegFrm}
\DeclareCategory{\KHaus} {KHaus}
\DeclareCategory{\LCHaus} {LCHaus}
\DeclareCategory{\LCSob} {LCSob}
\DeclareCategory{\StLCSob} {StLCSob}
\DeclareCategory{\StKSob} {StKSob}
\DeclareCategory{\Top}{Top}
\DeclareCategory{\Sob}{Sob}
\DeclareCategory{\IA}{IA}
\DeclareCategory{\Reg}{Reg}
\DeclareCategory{\Set}{Set}
\DeclareCategory{\CABA}{CABA}
\DeclareCategory{\HA}{HA}
\DeclareCategory{\cHA}{cHA}
\newcommand{\nest}{\vartriangleleft}
\newcommand{\sq}{\underline{\mkern-1mu\square\mkern-1mu}\mkern2mu}
\renewcommand{\diamond}{\lozenge}
\let\ampersand\&
\renewcommand{\&}{\mathbin{\ampersand}}
\newcommand{\upset}{\mathord{\uparrow}}
\setlist[enumerate]{font=\normalfont}
\DeclareFontFamily{U}{mathb}{\hyphenchar\font45}
\DeclareFontShape{U}{mathb}{m}{n}{
<-6> mathb5 <6-7> mathb6 <7-8> mathb7
<8-9> mathb8 <9-10> mathb9
<10-12> mathb10 <12-> mathb12
}{}
\DeclareSymbolFont{mathb}{U}{mathb}{m}{n}
\DeclareMathSymbol{\llcurly}{\mathrel}{mathb}{"CE}
\DeclareMathSymbol{\ggcurly}{\mathrel}{mathb}{"CF}
\tikzset{
  symbol/.style={
    draw=none,
    every to/.append style={
      edge node={node [sloped, allow upside down, auto=false]{$#1$}}}
  }
}
\title{Local compactness in MT-algebras}
\author{Guram Bezhanishvili and Ranjitha Raviprakash}
\address{Department of Mathematical Sciences\\
New Mexico State University\\
Las Cruces NM 88003\\
USA}
\email{guram@nmsu.edu, prakash2@nmsu.edu}
\subjclass[2020]{18F60; 18F70; 06D22; 54D45; 54D30} \keywords{Interior operator, pointfree topology, duality theory, local compactness, compactness} 
\begin{document}

\begin{abstract}
In our previous work, we introduced McKinsey-Tarski algebras (MT-algebras for short) as an alternative pointfree approach to topology. Here we study local compactness in MT-algebras.  
We establish the Hofmann-Mislove theorem 
for sober MT-algebras, using which we develop the MT-algebra versions of such well-known dualities in pointfree topology as
Hofmann-Lawson, Isbell, and Stone dualities. This yields
a new perspective on these classic results. 
\end{abstract}

\maketitle

\tableofcontents 

\section{Introduction}

Pointfree topology is the study of topological spaces through their frames of open sets 
\cite{Johnstone, PicadoPultr2012, PicadoPultr2021}.
There is a classic dual adjunction between the categories $\Frm$ of frames and $\Top$ of topological spaces, which  
restricts to a dual equivalence between the full subcategories $\SFrm$ of spatial frames and 
$\Sob$ of sober spaces. Further restrictions yield the following well-known dualities in pointfree topology:
\begin{itemize}
	\item {\em Hofmann-Lawson duality} between the categories $\CFrm$ of continuous frames and $\LCSob$ of locally compact sober spaces \cite{HofmannLawson1978}.
	\item {\em Isbell duality} between the categories $\KRegFrm$ of compact regular frames and $\KHaus$ of compact Hausdorff spaces \cite{Isbell} (see also \cite{BanMul1980}).
	\item {\em Stone duality} between the categories $\BA$ of boolean algebras, $\StoneFrm$ of Stone frames, and $\Stone$ of Stone spaces \cite{Stone1936,banaschewski1989universal}.
\end{itemize}

Topological spaces can alternatively be studied using interior operators on their powerset algebras \cite{Kuratowski1922}. This approach was further developed by McKinsey and Tarski \cite{McKinseyTarski}, who proposed interior algebras as the {\em algebra of topology}. In \cite{BezhanishviliRR2023} we proposed complete interior algebras as an alternative pointfree approach to topology. We termed them McKinsey-Tarski algebras or simply MT-algebras. We thus arrived at the category 
$\MT$ of MT-algebras, and explored its connections to $\Frm$ and $\Top$, focusing on separation axioms within the context of MT-algebras. 

In this paper, we continue our study of MT-algebras, and mainly concentrate on local compactness in the setting of MT-algebras. We obtain an analogue of the Hofmann-Mislove theorem for sober MT-algebras, using which 
we establish the MT-versions of Hofmann-Lawson, Isbell, and Stone dualities. We also generalize the latter two to obtain dualities for locally compact Hausdorff and locally Stone MT-algebras, which yield dualities for locally compact regular and locally Stone frames. 

The paper is organized as follows. In \cref{sec: Prelims}, we recall the dual adjunction between $\Frm$ and $\Top$, 
and recap some basic facts about MT-algebras that will be used in the paper. In \cref{sec 3}, we develop the Hofmann-Mislove theorem for sober MT-algebras. In \cref{sec: H-L}, we define locally compact MT-algebras and obtain Hofmann-Lawson duality for locally compact sober spatial MT-algebras. In \cref{sec: 5}, we restrict the Hofmann-Lawson duality for MT-algebras to the setting of stably locally compact MT-algebras, which further restricts to the setting of stably compact MT-algebras. 
In \cref{sec: 6}, we show that these dualities restrict to locally compact Hausdorff MT-algebras. Further restriction yields Isbell duality for compact Hausdorff MT-algebras.
Finally, in \cref{sec: 7}, we define zero-dimensional, locally Stone, and Stone MT-algebras, and develop the MT-algebra version of Stone duality.

\section{Preliminaries} \label{sec: Prelims}

In this section, we recall the classic adjunction 
between frames and topological spaces. We also recall some basic facts about MT-algebras, 
which will play an important role in 
this paper.

\subsection{Frames}

A {\em frame} is a complete lattice $L$ satisfying the join-infinite distributive law
$$
a\wedge\bigvee S=\bigvee\{a\wedge s\mid s\in S\}
$$
for all $a\in L$ and $S\subseteq L$. A {\em frame homomorphism} is a map $h:L \to M$ between frames preserving finite meets and arbitrary joins. Let $\Frm$ be the category of frames and frame homomorphisms. 

We also let $\Top$ be the category of topological spaces and continuous maps. There is a well-known dual adjunction between $\Top$ and $\Frm$ (see, e.g., \cite[p.~16]{PicadoPultr2012}). 
The contravariant functor $\Omega :\Top \to \Frm$ sends a space $X$ to the frame $\Omega(X)$ of opens of $X$ and a continuous map $f:X\to Y$ to the frame homomorphism $\Omega(f):\Omega(Y)\to\Omega(X)$ given by $\Omega(f)(U)=f^{-1}[U]$ for each $U\in\Omega(Y)$. 

We recall that a {\em point} of a frame $L$ is a completely prime filter of $L$. The contravariant functor $\pt:\Frm \to \Top$ then sends each frame $L$ to the space $\pt(L)$ of points of $L$. The topology of $\pt(L)$ is given by $\zeta[L]$ where $\zeta(a)=\{ p\in\pt(L) \mid a\in p\}$ for each $a\in L$. Moreover, $\pt$ sends a frame homomorphism $h:L \to M$ to the continuous map $\pt(h):\pt(M)\to\pt(L)$ given by $\pt(h)(p)=h^{-1}[p]$ for each $p\in\pt(M)$. For $X\in\Top$ and $x\in X$, let $\delta(x)=\{ U\in\Omega(X) \mid x\in U \}$. 

\begin{theorem}[see, e.g., {\cite[pp.~18, 20]{PicadoPultr2012}}] \label{pointfree equivalence}
$ $
\begin{enumerate} [ref=\thetheorem(\arabic*)]

\item $\zeta : L \to \Omega(\pt(L))$ is an isomorphism iff $L$ is spatial.\label [theorem]{frames equivalence} 
\item $\delta : X \to \pt(\Omega(X))$ is a homeomorphism iff $X$ is sober.\label [theorem] {spaces equivalence}
\end{enumerate}
\end{theorem}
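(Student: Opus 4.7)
The plan is to prove each part by splitting the claim ``is an isomorphism/homeomorphism'' into injectivity, surjectivity, and (for $\delta$) openness onto the image, and then matching these conditions against the definitions of spatial and sober. Throughout I would rely on the two adjunct functors as already defined.

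\textbf{Part (1).} I would first verify that $\zeta$ is always a frame homomorphism: preservation of finite meets and arbitrary joins follows directly from the defining properties of a completely prime filter (upward closure and closure under finite meets; and for joins, that $\bigvee S\in p$ iff some $s\in S$ lies in $p$). Surjectivity of $\zeta$ onto $\Omega(\pt(L))$ is immediate, since by definition the topology of $\pt(L)$ is the image $\zeta[L]$. Therefore $\zeta$ is an isomorphism precisely when it is injective, and injectivity translates (via contrapositive on the order) into the statement that whenever $a\not\leq b$ in $L$ there is a point $p\in\pt(L)$ with $a\in p$ and $b\notin p$, which is the standard definition of $L$ being spatial.

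\textbf{Part (2).} For $\delta$ I would first establish the two identities
\[
\delta^{-1}(\zeta(U))=U \quad\text{and}\quad \delta[U]=\zeta(U)\cap \delta[X]
\]
for each $U\in\Omega(X)$. Together they show at once that $\delta$ is continuous and that $\delta$ is open as a map onto its image; hence as soon as $\delta$ is a bijection it is automatically a homeomorphism. Injectivity of $\delta$ is equivalent to $X$ being $T_0$, because $\delta(x)=\delta(y)$ just says $x$ and $y$ have the same open neighborhoods. For surjectivity I would invoke the bijective correspondence between completely prime filters $p\subseteq\Omega(X)$ and irreducible closed subsets of $X$ given by $p\mapsto X\setminus\bigcup p$ and $F\mapsto\{U\in\Omega(X)\mid U\cap F\neq\emptyset\}$, under which $\delta(x)$ corresponds to $\overline{\{x\}}$. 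Thus $\delta$ is surjective iff every irreducible closed set is the closure of a point, and combined with injectivity, $\delta$ is bijective iff $X$ is sober.

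\textbf{Main obstacle.} The checks that $\zeta$ is a frame homomorphism and that the two identities above hold are routine, and the equivalence of $T_0$ with injectivity of $\delta$ is transparent. The step that carries the real content is the correspondence between completely prime filters of $\Omega(X)$ and irreducible closed subsets of $X$, together with the identification $\delta(x)\leftrightarrow\overline{\{x\}}$. Verifying that $p\mapsto X\setminus\bigcup p$ does land in irreducible closed sets (and that the inverse assignment yields a completely prime filter) requires careful use of complete primality of $p$ and is the nontrivial core of the argument; granted this, matching it to the definition of sobriety is essentially a translation of language.
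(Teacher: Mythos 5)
This theorem is stated in the paper as a recalled classical result, with only a citation to Picado--Pultr and no proof, so there is no in-paper argument to compare against; what you have written is the standard textbook proof. Part (1) is correct as written: $\zeta$ is a surjective frame homomorphism by construction of the topology on $\pt(L)$, so being an isomorphism reduces to order-reflection, which is precisely spatiality. The identities $\delta^{-1}(\zeta(U))=U$ and $\delta[U]=\zeta(U)\cap\delta[X]$ in Part (2) are also correct and do give continuity and openness onto the image, and injectivity of $\delta$ is indeed equivalent to $T_0$.

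There is, however, a concrete error in the step you yourself single out as the core of Part (2). The assignment $p\mapsto X\setminus\bigcup p$ sends \emph{every} completely prime filter to the empty set: $X\in\Omega(X)$ lies in every point $p$ of $\Omega(X)$, so $\bigcup p=X$. The correct assignment sends $p$ to the complement of the largest open set \emph{not} in $p$, that is, $p\mapsto X\setminus\bigcup\{U\in\Omega(X)\mid U\notin p\}$; under this map $\delta(x)$ goes to $\overline{\{x\}}$, since $\bigcup\{U\mid x\notin U\}$ is exactly $X\setminus\overline{\{x\}}$. Your inverse assignment $F\mapsto\{U\mid U\cap F\neq\emptyset\}$ is the right one, but note that complete primality of the resulting filter is automatic ($F$ meets a union iff it meets some member); it is the \emph{filter} property (closure under finite intersections) that uses irreducibility of $F$, while complete primality of $p$ is what is needed to show $U\in p\Rightarrow U\cap F_p\neq\emptyset$ when checking that the two maps are mutually inverse. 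With the corrected formula the argument goes through and the rest of your outline is sound; as written, the verification you defer would fail at the first step.
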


\subsection{MT-algebras} \label{sec: MT-algebras}

With each topological space $X$ we can also associate the powerset algebra $\P(X)$ equipped with the interior operator $\int$. Then $\Omega(X)$ is recovered as the fixed points of $\int$. This perspective originates in \cite{Kuratowski1922}. McKinsey and Tarski \cite{McKinseyTarski} generalized Kuratowski's approach to the study of interior operators on arbitrary boolean algebras. For further results in this direction we refer to \cite{Rasiowasikorski,Esakia,naturman1990interior}. The following definition and subsequent results are taken from \cite{BezhanishviliRR2023}. 

\begin{definition}[McKinsey-Tarski algebras]
   An \emph{MT-algebra} is a pair $M:=(B,\square)$ such that $B$ is a complete boolean algebra and $\square$ is an interior operator on $B$ (that is, $\square$ satisfies the Kuratowski axioms $\square 1=1$, $\square(a \wedge b) = \square a \wedge \square b$, $\square a \leq a$, and $\square a \leq \square \square a$ for all $a , b \in B$).
                   \end{definition}
 
 If $f: X \to Y$ is a continuous map,  
   then $f^{-1}: \mathcal{P}(Y) \to \mathcal{P}(X)$ is a complete boolean homomorphism satisfying $f^{-1}(\int(A)) \subseteq \int(f^{-1}(A))$.  This generalizes to the following: 
      
   \begin{definition}     A mapping $ f : M \to N$ between MT-algebras is an \emph{MT-morphism} if $f$ is a complete boolean homomorphism such that $f(\square_M a) \leq \square_N f(a)$ for all $ a \in M$.
    \end{definition}
    
 Let $\MT$ be the category of MT-algebras and MT-morphisms.

\begin{definition}\
Let $M$ be an MT-algebra. An element $a \in M$ is {\em open} if $a= \square a$. Let $\O(M)$ be the collection of open elements of $M$.
\end{definition}

\begin{lemma}
$ $
\begin{enumerate}[ref=\thelemma(\arabic*)]
\item If $M$ is an MT-algebra, then $\O(M)$ is a frame 
\item The restriction of an MT-morphism $f:M \to N$ to $f|_{\O(M)}:\O(M)\to\O(N)$ is a frame homomorphism. \label[lemma]{restriction to frame homomorphism}
\end{enumerate}
\end{lemma}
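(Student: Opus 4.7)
For part (1), I would verify that $\O(M)$, considered as a subposet of $M$, satisfies the frame axioms. Closure under finite meets is immediate from the Kuratowski axioms $\square 1 = 1$ and $\square(a \wedge b) = \square a \wedge \square b$: if $a, b \in \O(M)$ then $a \wedge b = \square a \wedge \square b = \square(a \wedge b)$, so $a \wedge b \in \O(M)$, and meets in $\O(M)$ agree with meets in $M$. For arbitrary joins, the natural candidate is $\bigvee^{\O(M)} S := \square\bigl(\bigvee^M S\bigr)$. This element lies in $\O(M)$ since it is of the form $\square x$ (using $\square\square = \square$). It is an upper bound: for $s \in S$, $s \leq \bigvee^M S$ implies $s = \square s \leq \square\bigl(\bigvee^M S\bigr)$ by monotonicity of $\square$. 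And it is the least upper bound in $\O(M)$: any open $b$ above all $s \in S$ satisfies $\bigvee^M S \leq b$, hence $\square\bigl(\bigvee^M S\bigr) \leq \square b = b$.

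The join-infinite distributivity in $\O(M)$ then follows by a short calculation using that $B$ is a complete boolean algebra (so the frame law already holds in $B$) and that $\square$ preserves binary meets. For $a \in \O(M)$ and $S \subseteq \O(M)$,
\[
a \wedge \bigvee\nolimits^{\O(M)} S \;=\; \square a \wedge \square\bigl(\bigvee\nolimits^M S\bigr) \;=\; \square\bigl(a \wedge \bigvee\nolimits^M S\bigr) \;=\; \square\bigl(\bigvee\nolimits^M \{a \wedge s \mid s \in S\}\bigr),
\]
which is exactly $\bigvee^{\O(M)} \{a \wedge s \mid s \in S\}$ since each $a \wedge s$ is already open.

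For part (2), I would first check that $f$ sends opens to opens: if $a = \square_M a$, then the MT-morphism inequality $f(\square_M a) \leq \square_N f(a)$ combined with $\square_N f(a) \leq f(a)$ forces $f(a) = \square_N f(a)$. Preservation of finite meets is then inherited from $f$ being a boolean homomorphism, since meets in $\O(M)$ and $\O(N)$ coincide with those in $M$ and $N$.

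The step I expect to require the most care is preservation of arbitrary joins, because the MT-morphism axiom only gives an inequality $f \square_M \leq \square_N f$, not an equality. The trick is that we only need the equality when evaluated on joins of open elements. Concretely, for $S \subseteq \O(M)$,
\[
f\bigl(\bigvee\nolimits^{\O(M)} S\bigr) \;=\; f\bigl(\square_M\bigl(\bigvee\nolimits^M S\bigr)\bigr),
\]
and this element is open in $N$ by the first bullet, lies below $f\bigl(\bigvee^M S\bigr) = \bigvee^N f[S]$ (using that $f$ is a complete boolean homomorphism), and is above every $f(s)$ for $s \in S$ since $s \leq \square_M\bigl(\bigvee^M S\bigr)$ in $M$. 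Thus $f\bigl(\square_M\bigl(\bigvee^M S\bigr)\bigr)$ is the least upper bound of $f[S]$ in $\O(N)$, which equals $\square_N\bigl(\bigvee^N f[S]\bigr) = \bigvee^{\O(N)} f[S]$. This completes the frame homomorphism property.
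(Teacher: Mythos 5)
Your proof is correct and is the standard verification that the paper itself omits (the lemma is imported from \cite{BezhanishviliRR2023} without proof): fixed points of an interior operator on a complete boolean algebra form a frame, and the inequality $f(\square_M a)\leq\square_N f(a)$ upgrades to equality on opens. One small streamlining you could make: since each $s\in S\subseteq\O(M)$ satisfies $s=\square s\leq\square\bigl(\bigvee^M S\bigr)$ by monotonicity of $\square$, the join $\bigvee^M S$ is itself already open, so joins in $\O(M)$ coincide with joins in $M$ and both the distributivity calculation and the join-preservation argument in part (2) become immediate from $f$ being a complete boolean homomorphism.
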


It follows that we have a functor $\O:\MT\to\Frm$. 
For $L\in\Frm$, let $(B(L),e)$ be the free boolean extension of $L$ (see, e.g., \cite[Sec.~V.4]{DL}). Then the right adjoint to the inclusion $e:L\to B(L)$ is an interior operator on $B(L)$ and $L$ is isomorphic to the fixpoints of $\square$. 

Let $\overline{B(L)}$ be the MacNeille completion of $B(L)$ (see, e.g., \cite[Sec.~XII.3]{DL}). Define the lower extension $\sq:\overline{B(L)}\to\overline{B(L)}$ of $\square:B(L)\to B(L)$ by 
\[
\sq x = \bigvee\{\square a\mid a\in B(L) \mbox{ and } a\leq x\}. 
\] 
Then $\left(\overline{B},\sq\right)$ is an MT-algebra, and we arrive at the following:

\begin{theorem} \label{thm: essentially surj}
The functor $\O:\MT\to\Frm$ is essentially surjective.
\end{theorem}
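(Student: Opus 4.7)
My plan is to prove essential surjectivity by verifying that for an arbitrary frame $L$, the MT-algebra $M := (\overline{B(L)}, \sq)$ constructed in the preceding paragraphs satisfies $\O(M) \cong L$ as frames. Since the excerpt already asserts that $(\overline{B(L)}, \sq)$ is an MT-algebra (which amounts to verifying the four Kuratowski axioms for $\sq$ using that $\square$ is an interior operator on $B(L)$ together with the standard preservation properties of the MacNeille completion), the real content is to identify $\O(M)$, the fixpoints of $\sq$, with $L$.

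Let $i : L \to \overline{B(L)}$ denote the composition of $e : L \to B(L)$ with the MacNeille embedding $B(L) \hookrightarrow \overline{B(L)}$. Both components are injective, so $i$ is injective, and I will show its image is exactly $\O(M)$. For $a \in L$, the element $e(a)$ is a fixpoint of $\square$ in $B(L)$ (by the isomorphism $L \cong $ fixpoints of $\square$ recorded in the excerpt), so $e(a) = \square(e(a))$ occurs as a term in the defining join of $\sq(e(a))$; this yields $\sq(e(a)) \geq e(a)$, and combined with $\sq x \leq x$ gives $\sq(i(a)) = i(a)$. Conversely, suppose $\sq x = x$ for $x \in \overline{B(L)}$. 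Then
\[
x = \bigvee\nolimits^{\overline{B(L)}} \{\square b \mid b \in B(L),\ b \leq x\},
\]
a join of elements of $\square[B(L)] = e[L]$. Writing each $\square b = e(a_b)$ with $a_b \in L$, and using that $e$ preserves all joins (being a left adjoint to the right adjoint from which $\square$ is defined) and that the MacNeille completion preserves existing joins of $B(L)$, this join equals $e\bigl(\bigvee_L \{a_b\}\bigr)$, which lies in $i[L]$. Hence $\O(M) = i[L]$.

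It remains to verify that $i : L \to \O(M)$ is a frame isomorphism. Order-preservation and injectivity are clear, and surjectivity onto $\O(M)$ was just established. Preservation of arbitrary joins is precisely the argument just given: for $S \subseteq L$,
\[
i\bigl(\textstyle\bigvee_L S\bigr) = e\bigl(\textstyle\bigvee_L S\bigr) = \bigvee\nolimits^{\overline{B(L)}} e[S] = \bigvee\nolimits^{\O(M)} i[S],
\]
the last equality because joins in $\O(M)$ (as a subframe of fixpoints of an interior operator) are computed as in $\overline{B(L)}$. Preservation of finite meets follows from $e$ preserving finite meets (as a Boolean homomorphism) together with the MacNeille completion preserving existing finite meets.

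The main obstacle is the converse direction of $\O(M) = i[L]$, which hinges on the subtle interplay between joins in three algebras — $L$, $B(L)$, and $\overline{B(L)}$. The proof goes through only because the two nontrivial preservation facts happen to align: $e$ is a left adjoint and so preserves arbitrary joins into $B(L)$, and the MacNeille completion preserves whichever joins happen to exist in $B(L)$. Without these, the join defining $\sq x$ would not be forced back into $e[L]$, and the fixpoint set could be strictly larger than $i[L]$.
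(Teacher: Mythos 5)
Your proof is correct and follows essentially the same route as the paper: it realizes $L$ as $\O\left(\overline{B(L)},\sq\right)$ via the free boolean extension followed by the MacNeille completion, with the key point being exactly the one you isolate — $e$ preserves existing joins (being a left adjoint, which for a frame $L$ is where the join-infinite distributive law enters) and the MacNeille embedding preserves them as well, so the fixpoints of $\sq$ are precisely the image of $L$. (One minor slip: $e:L\to B(L)$ is a lattice embedding, not a ``Boolean homomorphism,'' since $L$ need not be Boolean; this does not affect the argument.)
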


Clearly the assignment $X \mapsto (\P(X),\int)$ and $f \mapsto f^{-1}$ defines a contravariant functor $\P : \Top \to \MT$. In the other direction, for
$M \in \MT$, let $X = \at(M)$ be the set of atoms of $M$. Define 
$\eta : M \to \P(X)$ by 
$
\eta (a) =\{ x \in X \mid x \leq a \}.
$
Then $\eta$ is an onto complete boolean homomorphism. Therefore, the restriction of $\eta$ to $\O(M)$ is a frame homomorphism. Hence, the image $\tau:=\eta[\O(M)]$ is a topology on $X$. Moreover, for each MT-morphism $f:L\to M$, define 
$\at(f):\at(M)\to\at(L)$ by $\at(f)(x)=f^*(x)$, where $f^* : M \to L$ is the left adjoint of $f$ (given by $f^*(x)=\bigwedge\{a \in M \mid x \leq f(a)\}$). Then $\at(f)$ is a continuous map, and the assignment $M \mapsto \at(M)$ and $f \mapsto \at(f)$ defines a contravariant functor $\at:\MT \to \Top$.

\begin{theorem} \label{Top and MT}
$(\P, \at)$ is a dual adjunction between $\Top$ and $\MT$. 
\end{theorem}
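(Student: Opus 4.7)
The plan is to exhibit a natural bijection
\[
\Hom_{\MT}(M,\P(X))\;\longleftrightarrow\;\Hom_{\Top}(X,\at(M))
\]
using maps already present in the paper. Given an MT-morphism $\phi:M\to\P(X)$, send it to $\widehat\phi:X\to\at(M)$ defined by $\widehat\phi(x)=\phi^{*}(\{x\})$, where $\phi^{*}$ is the left adjoint of $\phi$. In the other direction, given a continuous $g:X\to\at(M)$, send it to $\widetilde g:M\to\P(X)$ defined by $\widetilde g(a)=g^{-1}(\eta(a))=\{x\mid g(x)\leq a\}$; equivalently $\widetilde g=\P(g)\circ\eta$, where $\eta:M\to\P(\at(M))$ is the map introduced in the paragraph preceding the theorem.

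First I would check that $\widehat\phi$ and $\widetilde g$ are well-defined morphisms in their respective categories. The core observation is that $\phi^{*}(\{x\})$ is always an atom of $M$: writing $b=\phi^{*}(\{x\})$, preservation of arbitrary meets by $\phi$ gives $x\in\phi(b)$, and the boolean identity $\phi(b\setminus c)=\phi(b)\setminus\phi(c)$ shows that any $0<c<b$ would force $x\in\phi(b\setminus c)$, contradicting the minimality of $b$; this rules out $b$ being $0$ or non-atomic. Continuity of $\widehat\phi$ follows from
\[
\widehat\phi^{-1}(\eta(\square a))=\{x\mid\phi^{*}(\{x\})\leq\square a\}=\phi(\square a),
\]
which is open in $X$ because $\phi(\square a)=\phi(\square\square a)\leq\int\phi(\square a)$, and these basic opens generate the topology $\tau=\eta[\O(M)]$ of $\at(M)$. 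For $\widetilde g$, the map $\eta$ is already stated to be a (complete boolean) MT-morphism, $\P(g)=g^{-1}$ is an MT-morphism by definition of the functor $\P$, and composites of MT-morphisms are MT-morphisms.

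The two assignments are mutually inverse by a direct computation: starting from $\phi$ and going around, the adjunction $\phi^{*}\dashv\phi$ gives
\[
\widetilde{\widehat\phi}(a)=\{x\mid\phi^{*}(\{x\})\leq a\}=\{x\mid\{x\}\subseteq\phi(a)\}=\phi(a),
\]
while starting from $g$,
\[
\widehat{\widetilde g}(x)=(\widetilde g)^{*}(\{x\})=\bigwedge\{a\in M\mid g(x)\leq a\}=g(x),
\]
the last equality because $g(x)$ is itself an atom of $M$ and hence is the least element lying above itself. Naturality in both $M$ and $X$ is then a routine unfolding: for an MT-morphism $f:M\to N$, the adjunction $f^{*}\dashv f$ gives $\P(\at(f))\circ\eta_M=\eta_N\circ f$, and for continuous $h:X\to Y$, the left adjoint of $h^{-1}$ sends singletons to singletons, yielding the dual naturality square. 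The only genuinely delicate step, and what I expect to be the main obstacle, is the verification that $\phi^{*}$ carries singletons to atoms rather than to arbitrary elements; everything else reduces to bookkeeping that exploits the pre-existing fact that $\eta$ is a surjective complete boolean homomorphism.
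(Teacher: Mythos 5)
Your proposal is correct, and it realizes exactly the dual adjunction the paper records: the bijection $\phi\mapsto\widehat\phi$, $g\mapsto\widetilde g=\P(g)\circ\eta$ is the hom-set form of the adjunction whose units $\eta$ and $\varepsilon$ are given in \cref{MT adjunction} (note $\widehat\phi=\at(\phi)\circ\varepsilon$), and your key verification that $\phi^{*}$ sends singletons to atoms is the substantive point, since the paper states the theorem without proof, citing \cite{BezhanishviliRR2023}. The only tiny gap is that you invoke ``$\eta$ is already stated to be an MT-morphism,'' which appears in \cref{MT adjunction} \emph{after} the theorem; to avoid circularity, just observe directly that $\eta(\square a)\in\eta[\O(M)]=\tau$ and $\eta(\square a)\subseteq\eta(a)$, so $\eta(\square a)\subseteq\int\eta(a)$.
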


\begin{remark}\label{MT adjunction}
The units of the dual adjunction $(\P,\at)$ are given by 
\[
\eta:1_{\MT}\to\P\circ\at \quad \mbox{and} \quad \varepsilon:1_{\Top}\to\at\circ\P,
\] 
where for  $M \in \MT$ and $X \in \Top$, we have 
\[
\eta(a) =\{ x \in \at(M) \mid x \leq a \} \quad \mbox{and} \quad \varepsilon (x) =\{ x \}.
\]
We then obtain that $\varepsilon$ is a homeomorphism for each $X\in\Top$, $\eta$ is an onto MT-morphism for each $M\in\MT$, and $\eta$ is an isomorphism iff $M$ is atomic.
\end{remark}

Since points of an MT-algebra correspond to its atoms, we call atomic MT-algebras {\em spatial}. Let $\SMT$ be the full subcategory of $\MT$ consisting of spatial MT-algebras. 
Then the dual adjunction 
of \cref{Top and MT} restricts to yield:

\begin{theorem}\label{MT equivalence}
$\Top$ is dually equivalent to $\SMT$.
\end{theorem}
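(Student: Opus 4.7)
The plan is to derive \cref{MT equivalence} directly by restricting the dual adjunction $(\P,\at)$ of \cref{Top and MT}, using the description of the units given in \cref{MT adjunction}.

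First I would verify that the two functors cut down correctly. For the contravariant functor $\P:\Top\to\MT$, observe that for any space $X$ the complete boolean algebra $\P(X)$ is atomic (its atoms are the singletons $\{x\}$), so $\P(X)\in\SMT$. Hence $\P$ factors through the inclusion $\SMT\hookrightarrow\MT$, giving a functor $\P:\Top\to\SMT$. In the other direction, the functor $\at:\MT\to\Top$ restricts tautologically to $\at:\SMT\to\Top$.

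Next I would show that this restricted adjunction has invertible unit and counit. By \cref{MT adjunction}, the counit $\varepsilon_X:X\to\at(\P(X))$ is a homeomorphism for \emph{every} topological space $X$, so no further work is needed there. The unit $\eta_M:M\to\P(\at(M))$ is, by the same remark, an MT-isomorphism precisely when $M$ is atomic, i.e.\ when $M\in\SMT$. Thus on the subcategory $\SMT$ the unit is also a natural isomorphism.

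Having both unit and counit as natural isomorphisms is the definition of a (dual) equivalence, so this immediately yields that $\P:\Top\to\SMT$ and $\at:\SMT\to\Top$ are quasi-inverse contravariant equivalences. The only real point to check is the first one — that $\P$ actually lands in $\SMT$ — and this is essentially trivial since powerset algebras are atomic; every other ingredient is already packaged in \cref{Top and MT} and \cref{MT adjunction}.
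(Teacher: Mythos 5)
Your proposal is correct and matches the paper's (implicit) argument exactly: the paper derives \cref{MT equivalence} by restricting the dual adjunction of \cref{Top and MT}, using precisely the facts recorded in \cref{MT adjunction} that $\varepsilon$ is always a homeomorphism and that $\eta$ is an isomorphism iff $M$ is atomic, together with the observation that powerset algebras are atomic so $\P$ lands in $\SMT$. The only cosmetic difference is that the paper calls both $\eta$ and $\varepsilon$ ``units'' of the dual adjunction rather than unit and counit.
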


\subsection{Lower separation axioms in MT-algebras}

We conclude this preliminary section with a brief account of lower separation axioms for MT-algebras, taken from \cite{BezhanishviliRR2023}. 

Let $M$ be an MT-algebra. An element $a\in M$  is {\em closed} if $a=\diamond a$, and \emph{locally closed} if it is a meet of an open element and a closed element. Let $\C(M)$ be the set of closed elements and $\LC(M)$ the set of locally closed elements of $M$.

We also call $a\in M$ \emph{saturated} if $a$ is a meet of open elements, and \emph{weakly locally closed} if 
it is a meet of a saturated element and a closed element. Let $\S(M)$ be the set of saturated elements and $\WLC(M)$ the set of weakly locally closed elements of $M$.

\begin{definition} Let $M\in\MT$. 
\begin{enumerate}
\item $M$ is a {\em $T_0$-algebra} if $\WLC(M)$ join-generates $M$.
\item $M$ is a {\em $T_{1/2}$-algebra} if $\LC(M)$ join-generates $M$. 
\item $M$ is {\em sober} if $M$ is a $T_0$-algebra and for each join-irreducible element $p$ of $\C(M) $ there is an atom $x$ in $M$ such that $p = \diamond x$.
 \end{enumerate}
\end{definition}

\begin{theorem}
Let $M\in\MT$.
\begin{enumerate} [ref=\thetheorem(\arabic*)]
\item $M$ is a $T_{1/2}$-algebra iff $M$ is isomorphic to $\overline{B(\O(M))}$.\label[theorem]{T_1/2} 
\item If $M$ is a sober $T_{1/2}$-algebra, then $M$ is spatial iff $\O(M)$ is spatial.\label [theorem]{ when is M spatial}
\item If $M$ is sober, then $\vartheta : \at(M) \to \pt(\O(M))$, defined by $\vartheta(x)= \upset x  \cap \O(M)$, is a homeomorphism.\label [theorem]{vartheta homeomorphism}
\end{enumerate}
\end{theorem}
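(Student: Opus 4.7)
The plan is to establish the three parts in order, using (1) structurally in (2), and invoking the $T_0$ clause and the join-irreducible-closed clause of sobriety separately in (3).

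For (1), the inclusion $\O(M)\hookrightarrow M$ is an injective bounded lattice homomorphism, so the universal property of the free boolean extension supplies a boolean homomorphism $\bar f:B(\O(M))\to M$. Putting any $a\in B(\O(M))$ in disjunctive normal form $\bigvee_i(u_i\wedge\neg v_i)$ with $u_i,v_i\in\O(M)$, the relation $\bar f(a)=0$ in $M$ forces $u_i\le v_i$ in $\O(M)$ for each $i$, which in turn makes each summand zero in $B(\O(M))$, so $\bar f$ is injective. If $M$ is $T_{1/2}$, every $a\in M$ is a join of locally closed elements $u\wedge\neg v$, all of which lie in the image of $\bar f$; thus the image is join-dense in $M$, and boolean complementation then gives meet-density. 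Since $M$ is already complete, this realizes $M$ as the MacNeille completion of $B(\O(M))$. The MT-structures agree: both interior operators coincide on $B(\O(M))$ with the right adjoint of $\O(M)\hookrightarrow B(\O(M))$, and for arbitrary $x\in M$ both $\sq x$ and the given interior of $M$ compute to $\bigvee\{u\in\O(M):u\le x\}$. Conversely, if $M\cong\overline{B(\O(M))}$, then join-density of $B(\O(M))$ together with the fact that its elements are finite joins of locally closed pieces shows $\LC(M)$ join-generates $M$.

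For (2), I would combine (1) with the homeomorphism $\vartheta$ from (3). If $M$ is spatial, then $\eta$ is an isomorphism and its restriction $\eta|_{\O(M)}$ is a frame isomorphism onto the topology of $\at(M)$; identifying $\at(M)$ with $\pt(\O(M))$ via $\vartheta$ gives $\O(M)\cong\Omega(\pt(\O(M)))$, i.e., $\O(M)$ is spatial. Conversely, assume $\O(M)$ is spatial; by $T_{1/2}$ it suffices to produce an atom below every nonzero locally closed element $u\wedge\neg v$, i.e., whenever $u\not\le v$ in $\O(M)$. Spatiality supplies $p\in\pt(\O(M))$ with $u\in p$ and $v\notin p$; part (3) then provides an atom $x=\vartheta^{-1}(p)$ with $x\le u$ and $x\not\le v$, and atom-ness promotes the latter to $x\le\neg v$, so $x\le u\wedge\neg v$.

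For (3), first check that $\vartheta(x)$ is a completely prime filter: from $x\le\bigvee_i u_i$ one gets $x=\bigvee_i(x\wedge u_i)$, and atom-ness forces $x\wedge u_i=x$ for some $i$. For injectivity, suppose $\vartheta(x)=\vartheta(y)$ for atoms $x,y$. The $T_0$ clause expresses $x$ as a join of weakly locally closed elements, and join-irreducibility of the atom $x$ collapses this to $x=s\wedge c$ with $s$ saturated and $c$ closed. Writing $s=\bigwedge_k u_k$, each $u_k$ contains $x$ and hence (by $\vartheta(x)=\vartheta(y)$) contains $y$, so $y\le s$; the open $\neg c$ does not contain $x$, hence not $y$, so $y\le c$; thus $y\le s\wedge c=x$ and $y=x$. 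For surjectivity, given $p\in\pt(\O(M))$, set $v_p=\bigvee(\O(M)\setminus p)$, the largest open not in $p$. The filter property of $p$ makes $v_p$ meet-prime in $\O(M)$, so $\neg v_p$ is a nonzero join-irreducible element of $\C(M)$; the second sobriety clause yields an atom $x$ with $\diamond x=\neg v_p$, and for any $u\in\O(M)$ the equivalences $x\le u\iff u\not\le v_p\iff u\in p$ give $\vartheta(x)=p$. Finally, $\vartheta[\eta(u)]=\zeta(u)$ matches the subbases of the two topologies, so the bijection $\vartheta$ is a homeomorphism. The main obstacle is surjectivity, where the second sobriety clause is indispensable, mirroring the classical identification of irreducible closed sets with closures of points in sober spaces.
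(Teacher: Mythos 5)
The paper states this theorem without proof, quoting it from \cite{BezhanishviliRR2023}, so there is no in-paper argument to compare against. Your reconstruction is correct and follows the natural lines: the MacNeille-completion characterization (join- and meet-density of the embedded $B(\O(M))$) for (1), the completely-prime-filter/meet-prime-open correspondence together with the join-irreducible-closed clause of sobriety for (3), and the reduction of spatiality of $M$ to finding atoms below nonzero locally closed elements via $\vartheta^{-1}$ for (2); all the individual steps (injectivity of $\bar f$ via disjunctive normal form, agreement of $\sq$ with $\square_M$, the equivalences $x\le u\iff u\not\le v_p\iff u\in p$) check out.
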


Let  $\Sob$ be the full subcategory of $\Top$  consisting of sober spaces, and let $\SobMT$ be the full subcategory of $\MT$ consisting of sober MT-algebras. We then have: 

\begin{theorem} \label{thm: MTSob and Sob}
    The dual adjunction between $\Top$ and $\MT$ restricts to a dual adjunction between $\Sob$ and $\SobMT$, and the dual equivalence between $\Top$ and $\SMT$ restricts to a dual equivalence between $\Sob$ and $\SobSMT$.
\end{theorem}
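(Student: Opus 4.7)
The plan is to observe that $\Sob \hookrightarrow \Top$ and $\SobMT \hookrightarrow \MT$ are both full subcategories, so the claim about the adjunction reduces to checking that the functor $\P$ sends sober spaces to sober MT-algebras and that $\at$ sends sober MT-algebras to sober spaces; the units $\eta$ and $\varepsilon$ then automatically restrict. For the second part, I will use that $\P(X)$ is always atomic and invoke \cref{MT equivalence} on top of the sober restriction.

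The direction ``$\at$ preserves sobriety'' is immediate from the results already available. Given $M \in \SobMT$, \cref{vartheta homeomorphism} gives a homeomorphism $\at(M) \cong \pt(\O(M))$. Since $\pt(L)$ is a sober space for every frame $L$ (a standard fact recalled in the preliminaries), it follows that $\at(M) \in \Sob$.

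The main work is showing that $X \in \Sob$ implies $\P(X) \in \SobMT$. I will verify the two conditions separately. For the $T_0$-algebra property, since $\P(X)$ is atomic, it suffices to show that every singleton $\{x\}$ lies in $\WLC(\P(X))$. Let $S_x = \bigcap\{U \in \Omega(X) : x \in U\}$, which is a saturated element, and let $C_x = \overline{\{x\}}$, which is closed. In any $T_0$ space — and in particular in a sober space — one has $S_x \cap C_x = \{x\}$ (because $y \in S_x \cap C_x$ forces $y$ and $x$ to be topologically indistinguishable). For the second condition, let $p$ be a join-irreducible element of $\C(\P(X))$. Then $p$ is, concretely, an irreducible closed subset of $X$, and sobriety of $X$ produces a unique $x \in X$ with $p = \overline{\{x\}} = \diamond\{x\}$; the singleton $\{x\}$ is an atom of $\P(X)$, as required. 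Hence $\P(X) \in \SobMT$.

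Combining, the dual adjunction $(\P,\at)$ restricts to $\Sob$ and $\SobMT$. For the equivalence statement, I specialise to $\SobSMT = \SobMT \cap \SMT$. If $M \in \SobSMT$, then $M$ is atomic, so \cref{MT adjunction} yields $\eta_M$ an isomorphism, and we just showed $\at(M) \in \Sob$. Conversely, if $X \in \Sob$, then $\varepsilon_X$ is a homeomorphism, $\P(X)$ is automatically atomic (its atoms are singletons), and $\P(X) \in \SobMT$ by the previous paragraph, so $\P(X) \in \SobSMT$. The equivalence of \cref{MT equivalence} therefore restricts to a dual equivalence between $\Sob$ and $\SobSMT$.

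The main obstacle is verifying the $T_0$-algebra condition for $\P(X)$: everything else either follows from \cref{vartheta homeomorphism} and sobriety of $\pt(L)$, or is essentially the definition of sobriety transported across $\P$. The decomposition $\{x\} = \upset x \cap \overline{\{x\}}$ (specialization order) is the only genuinely new computation, but it is standard and unproblematic in $T_0$ spaces.
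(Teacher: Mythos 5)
Your proof is correct. Note that the paper itself gives no argument for this statement -- it is recalled from \cite{BezhanishviliRR2023} as part of the preliminaries -- so there is no in-paper proof to compare against; your decomposition (fullness of both subcategories, so it suffices that $\P$ and $\at$ preserve sobriety, with $\at$ handled by \cref{vartheta homeomorphism} and sobriety of $\pt(L)$, and $\P$ handled by the splitting $\{x\}=S_x\cap\overline{\{x\}}$ together with the identification of join-irreducible closed elements of $\P(X)$ with irreducible closed subsets) is the natural and essentially forced route, and every step checks out.
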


\section{Sober MT-algebras and Hofmann-Mislove theorem}
\label{sec 3}
The Hofmann-Mislove theorem 
\cite{HofmannMislove1981} establishes an isomorphism between the poset of compact saturated subsets of a sober space $X$ and the poset of Scott open filters of the frame $\O(X)$ of open subsets of $X$. This isomorphism generalizes to an isomorphism between the poset of Scott open filters of an arbitrary frame $L$ and the poset of compact saturated subsets of the space $\pt(L)$ of points of $L$ 
\cite[Thm.~8.2.5]{Vickers}. 
In this section we further generalize it to the setting of MT-algebras.

We start by recalling the way-below relation (see, e.g., \cite[Def.~I.1.1]{Compendium}). 
Let $L$ be a frame and  
$a,b \in L$. We say that $a$ is {\em way-below} b and write $a \ll b$ provided for each $S\subseteq L$, from $b \leq \bigvee S$ it follows that $a \leq \bigvee T$ for some finite $T \subseteq S$. 
We next specialize the notion of an open filter of an interior algebra (see, e.g., \cite[p.~26]{Esakia}) to MT-algebras.

\begin{definition}
Let $M \in \MT$. We call a filter $F$ of $M$ an {\em open filter} if $a \in F$ implies $\square a \in F$. 
\end{definition}

It is well known (see, e.g., \cite[p.~26]{Esakia}) 
that the map $F \mapsto F \cap \O(M)$ is an isomorphism between the poset of open filters of $M$ and the poset of filters of $\O(M)$, both ordered by inclusion. Its inverse is give by the map $G \mapsto \upset G$. Thus,
$F$ is an open filter iff $F= \upset (F \cap \O(M))$.

\begin{definition}
    Let $M \in \MT$. We call an open filter $F$ of $M$ {\em Scott-open} if for each directed family 
            $S \subseteq \O(M)$, from $\bigvee S \in F$ it follows that $S \cap F \ne \varnothing$.
                      \end{definition}

\begin{remark} \label{rem: Scott-open filter}
Equivalently, an open filter $F$ of an MT-algebra $M$ is Scott-open provided for each $S\subseteq\O(M)$, from $\bigvee S \in F$ it follows that $\bigvee T \in F$ for some finite $T \subseteq S$.
\end{remark}

For $M\in\MT$, clearly $F$ is a Scott-open filter of $M$ iff $F\cap M$ is a Scott-open filter of $\O(M)$. Thus, the isomorphism between the posets of open filters of $M$ and filters of $\O(M)$ restricts to an isomorphism between the posets of Scott-open filters of $M$ and Scott-open filters of $\O(M)$. 

We require the following generalization of \cite [p.~302]{KiemelPaseka}:

\begin{theorem} [Keimel-Paseka Lemma] \label{KP} 
  Let $M \in \SobMT$. If $F$ is a Scott-open filter of $M$, then $\bigwedge F \leq u$ implies $u \in F$ for each $u \in \O(M)$. 
\end{theorem}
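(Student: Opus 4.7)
The plan is to reduce to the classical frame-theoretic Keimel--Paseka argument applied to $\O(M)$, and then transport the resulting completely prime filter back to a genuine atom of $M$ via the homeomorphism $\vartheta$ that sobriety gives us.

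First I would clean up the hypothesis. Since $F$ is an open filter, $F_0 := F \cap \O(M)$ is a Scott-open filter of the frame $\O(M)$, we have $F = \upset F_0$, and consequently $\bigwedge F = \bigwedge F_0$ when computed in $M$. Arguing by contrapositive, assume $u \in \O(M)$ with $u \notin F$, equivalently $u \notin F_0$; the goal becomes the production of a nonzero element of $M$ lying below every element of $F_0$ but disjoint from $u$.

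Next, I would run the standard Zorn-type argument inside the frame $\O(M)$. The family $\{v \in \O(M) : v \geq u,\ v \notin F_0\}$ is nonempty (it contains $u$) and is closed under directed joins by the Scott-openness of $F_0$, so it has a maximal element $p$. The usual frame calculation shows $p$ is meet-prime: if $a \wedge b \leq p$ with $a,b \not\leq p$, then maximality forces $p \vee a$ and $p \vee b$ to lie in $F_0$, and frame distributivity yields $(p \vee a) \wedge (p \vee b) = p \vee (a \wedge b) = p \in F_0$, a contradiction. Hence $P := \O(M) \setminus \mathord{\downarrow} p$ is a completely prime filter of $\O(M)$ containing $F_0$ and missing $u$.

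Now sobriety enters. By \cref{vartheta homeomorphism}, $\vartheta : \at(M) \to \pt(\O(M))$ is a bijection, so there is an atom $x \in \at(M)$ with $\upset x \cap \O(M) = P$. For every $f \in F_0 \subseteq P$ this gives $x \leq f$, whence $x \leq \bigwedge F_0 = \bigwedge F$ in $M$. Since $u \notin P$, we have $x \not\leq u$, and because $x$ is an atom of the Boolean algebra $M$ this upgrades to $x \leq \neg u$. Therefore $0 < x \leq \bigwedge F \wedge \neg u$, so $\bigwedge F \not\leq u$, contradicting the hypothesis.

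The technical core of the proof is the standard Zorn step that extracts the meet-prime $p$; this is where the Scott-openness is actually consumed, and I expect it to be the main piece of work. The genuinely MT-algebraic ingredient, and the only place the sobriety of $M$ (rather than merely the framehood of $\O(M)$) is used, is the surjectivity of $\vartheta$, which converts the abstract point $P$ of $\O(M)$ into a concrete atom of $M$ and lets Boolean complementation turn the separation ``$x \not\leq u$'' into the witness ``$x \leq \bigwedge F \wedge \neg u$'' that we need.
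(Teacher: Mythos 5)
Your proof is correct, and at the strategic level it mirrors the paper's: a Zorn argument on the open elements above $u$ outside the filter, followed by an appeal to sobriety to produce an atom below $\bigwedge F$ but not below $u$, with the Boolean complement delivering the final contradiction. The packaging of the sobriety step is where you diverge. The paper shows the maximal element $m$ is meet-irreducible in $\O(M)$ (directly from $m=a\wedge b$ and the filter property, with no appeal to distributivity), observes that $\neg m$ is then join-irreducible in $\C(M)$, and invokes the \emph{definition} of a sober MT-algebra to get an atom $x$ with $\neg m=\diamond x$, i.e.\ $m=\square\neg x$. You instead prove the stronger meet-primeness of $p$ via frame distributivity, pass to the completely prime filter $\O(M)\setminus\mathord{\downarrow}p$, and use the surjectivity of $\vartheta$ from \cref{vartheta homeomorphism} to realize that point as an atom. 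Both are legitimate; the paper's version is marginally more self-contained (it consumes sobriety in its raw form and avoids the detour through $\pt(\O(M))$), while yours makes transparent that the argument is the classical Keimel--Paseka lemma for the frame $\O(M)$ plus a single transfer step, which is a clean conceptual reduction. All the small verifications in your write-up (that $\bigwedge F=\bigwedge F_0$, that the complement of $\mathord{\downarrow}p$ is automatically completely prime once it is a filter, that $F_0\subseteq P$ and $u\notin P$) check out.
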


\begin{proof}
    Suppose there is $u \in \O(M)$ such that $\bigwedge F \leq u$ but $ u \notin F$. Let 
    \[
    Z=\{ v \in \O(M) \mid u \leq v \mbox{ and } v \notin F\}.
    \] 
    Clearly $Z\ne\varnothing$ since $u\in Z$. Also, $Z$ is inductive because $F$ is Scott-open. 
        Therefore, $\max Z\ne\varnothing$ by Zorn's Lemma. 
        Let $m\in\max Z$. We show that $m$ is meet-irreducible. Clearly $m\ne 1$ since $m\notin F$. Suppose that $m=a \wedge b$. Then 
        $a \wedge b \notin F$. Since $F$ is a filter, 
        $a\notin F$ or $b \notin F$. Thus, $a \in Z $ or $b \in Z$, 
        and hence $m=a$ or $m=b$. This proves that $m$ is meet-irreducible, which implies that $\neg m$ is join-irreducible in $\C(M)$. Because $M$ is sober, there is $x\in\at(M)$ such that $\neg m=\diamond x$, and so $m=\square \neg x$. 
    
    We show that $x\le a$ for each $a\in F$. If $x\not\le a$, then since $x$ is an atom, $x \leq \neg a$, so $a\le\neg x$, and hence $a\le \square\neg x=m$ because $a\in\O(M)$. Therefore, $a\notin F$, a contradiction. Thus, 
    \[
    x \le \bigwedge F \le u \le m = \square\neg x \le \neg x.
    \] 
    The obtained contradiction proves that such a $u$ does not exist.
 \end{proof}

The next definition is an obvious generalization of compactness to MT-algebras. 
  
 \begin{definition}
 Let $M$ be an MT-algebra.
 \begin{enumerate}
\item An element $a \in M$ is {\em compact} if for each $S \subseteq \O(M)$, from $a \leq \bigvee S$ it follows that $a \leq \bigvee T$ for some finite $T \subseteq S$. 
\item We call $M$ {\em compact} if its top element is compact.
\end{enumerate}
\end{definition}

The next result is a direct generalization to MT-algebras of the finite intersection property for compact spaces (see, e.g., \cite[Thm.~3.1.1]{Engelking}), so we skip the proof.

\begin{lemma}\label{FIP}
Let $M$ be an MT-algebra.
\begin{enumerate}[ref=\thelemma(\arabic*)]
\item  $a \in M$ is compact iff for each $F \subseteq \C(M)$ with $\left(\bigwedge F\right) \wedge a =0$, there is a finite $G \subseteq F$ with $\left(\bigwedge G\right) \wedge a=0$. \label[lemma]{compactelement property}
\item $M$ is compact iff  for each $F \subseteq \C(M)$ with $\bigwedge F =0$, there is a finite $G \subseteq F$ with $\bigwedge G=0$. \label[lemma]{compact property}
\end{enumerate}
\end{lemma}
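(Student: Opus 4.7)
The plan is to exploit the de Morgan duality in the underlying complete boolean algebra of $M$. Since $M$ is a complete boolean algebra, complementation is an order-reversing bijection that interchanges arbitrary joins and meets. Moreover, $a\in\O(M)$ iff $\neg a\in\C(M)$ (because $\neg\square a=\diamond\neg a$), so complementation restricts to a bijection between $\O(M)$ and $\C(M)$. Consequently, for any $F\subseteq\C(M)$ the condition $(\bigwedge F)\wedge a=0$ is equivalent to $a\le\bigvee\{\neg c\mid c\in F\}$, where the dual family sits inside $\O(M)$; the analogous equivalence holds for finite subfamilies.

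For (1), starting from $a$ compact and $F\subseteq\C(M)$ with $(\bigwedge F)\wedge a=0$, I would apply the definition of compactness to the open family $\{\neg c\mid c\in F\}$ to extract a finite $G\subseteq F$ satisfying $a\le\bigvee\{\neg c\mid c\in G\}$, and then re-complement to conclude $(\bigwedge G)\wedge a=0$. The converse runs symmetrically: given an open cover $S\subseteq\O(M)$ of $a$, I would form $F=\{\neg s\mid s\in S\}\subseteq\C(M)$, observe that $(\bigwedge F)\wedge a=0$, and feed this into the assumed FIP condition to obtain a finite subcover. Part (2) then falls out as the special case $a=1$, since $(\bigwedge F)\wedge 1=\bigwedge F$ and likewise for $G$.

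I expect no real obstacle here: this is a direct transcription of the classical topological finite intersection property argument, and the MT-setting supplies everything needed, namely the complete boolean structure of $M$, the de Morgan laws for arbitrary joins and meets, and the complementation bijection between $\O(M)$ and $\C(M)$. The only mild point to keep in mind is that compactness of a non-top element is defined by covers drawn from $\O(M)$ (not from $M$), but this matches perfectly with the fact that complementation sends $\C(M)$ exactly onto $\O(M)$.
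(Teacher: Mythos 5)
Your argument is correct and is exactly the standard de Morgan/complementation argument that the paper has in mind (the paper explicitly omits the proof, calling it a direct generalization of the classical finite intersection property). The one point you rightly flag --- that complementation carries $\C(M)$ bijectively onto $\O(M)$ because $\neg\square a=\diamond\neg a$ --- is the only MT-specific ingredient, and your handling of it is fine.
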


For an MT-algebra $M$, let $\K(M)$ be the set of compact elements of $M$ and $\KS(M)$ the set of compact saturated elements of $M$. Let also $\SFilt(M)$ be the set of Scott-open filters of $M$.

 \begin{theorem} [Hofmann-Mislove for MT-algebras] \label{Hofmann-Mislove}
     Let $M\in \SobMT$. Then $(\SFilt(M),\subseteq)$ is isomorphic to $(\KS(M),\ge)$. 
 \end{theorem}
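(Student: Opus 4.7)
The plan is to exhibit mutually inverse order-reversing maps $\Phi\colon \SFilt(M)\to\KS(M)$ and $\Psi\colon \KS(M)\to\SFilt(M)$ defined by $\Phi(F)=\bigwedge F$ and $\Psi(a)=\upset\{u\in\O(M)\mid a\leq u\}$, where the upset is taken in $M$. The proof leans on two facts already in hand: the correspondence $F\mapsto F\cap\O(M)$, with inverse $G\mapsto\upset G$, between open filters of $M$ and filters of $\O(M)$; and the Keimel--Paseka Lemma (\cref{KP}), which bridges ``$\bigwedge F\leq u$'' and ``$u\in F$'' for open $u$.

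First I would verify that $\Phi(F)$ is compact saturated. Saturation is immediate: since $F=\upset(F\cap\O(M))$ as an open filter, $\bigwedge F=\bigwedge(F\cap\O(M))$ is a meet of opens. For compactness, suppose $\bigwedge F\leq\bigvee S$ with $S\subseteq\O(M)$. Because $\bigvee S\in\O(M)$, \cref{KP} gives $\bigvee S\in F$. The family $S'=\{\bigvee T\mid T\subseteq S\text{ finite}\}\subseteq\O(M)$ is directed with $\bigvee S'=\bigvee S\in F$, so by Scott-openness some $\bigvee T$ for finite $T\subseteq S$ lies in $F$, yielding $\bigwedge F\leq\bigvee T$.

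Next, $\Psi(a)$ is a Scott-open filter. The set $\{u\in\O(M)\mid a\leq u\}$ is a filter of $\O(M)$ (closure under finite meets in $\O(M)$), so the correspondence makes $\Psi(a)$ an open filter of $M$. Scott-openness is essentially the compactness of $a$: for directed $S\subseteq\O(M)$ with $\bigvee S\in\Psi(a)$, pick an open $u$ with $a\leq u\leq\bigvee S$; compactness of $a$ combined with directedness of $S$ produces $s\in S$ with $a\leq s$, so $s\in\Psi(a)$.

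For the round-trips, $a\mapsto\Psi(a)\mapsto\bigwedge\Psi(a)=\bigwedge\{u\in\O(M)\mid a\leq u\}=a$ uses saturation of $a$. Conversely, $F\mapsto\bigwedge F\mapsto\Psi(\bigwedge F)$ returns $\upset\{u\in\O(M)\mid\bigwedge F\leq u\}$; by \cref{KP} the set in braces is exactly $F\cap\O(M)$, and upsetting recovers $F$. Order-reversing is then painless: $F\subseteq F'$ forces $\bigwedge F'\leq\bigwedge F$, and the converse follows because saturation of $\bigwedge F$ together with \cref{KP} translates $\bigwedge F'\leq\bigwedge F$ into $F\cap\O(M)\subseteq F'\cap\O(M)$, hence $F\subseteq F'$. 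The main obstacle is the compactness check for $\bigwedge F$, where passing from $\bigwedge F\leq\bigvee S$ to a finite subcover essentially requires \cref{KP}; every other step is routine bookkeeping.
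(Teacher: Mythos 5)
Your proposal is correct and follows essentially the same route as the paper: your $\Psi(a)=\upset\{u\in\O(M)\mid a\leq u\}$ coincides with the paper's $\alpha(a)=\{b\in M\mid a\leq\square b\}$, Scott-openness of $\Psi(a)$ is obtained from compactness of $a$ exactly as in the paper's first claim, and compactness and saturation of $\bigwedge F$ via the Keimel--Paseka Lemma is the paper's second claim. The only cosmetic difference is that you verify two explicit mutually inverse maps, whereas the paper checks that the single map $\alpha$ is an order-embedding and then onto.
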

 
 \begin{proof}
    Define $\alpha : \KS(M) \to \SFilt(M)$ by $\alpha (s)= \{ a \in M \mid s \le \square a \}$. We first show that $\alpha$ is well defined. 
     
     \begin{claim}\label{porism up k}
    $\alpha(s) \in \SFilt(M)$. 
    \end{claim}
   
   \begin{proof} 
   It is straightforward to see that $\alpha(s)$ is an open filter. 
             To see that it is Scott-open, let $T\subseteq\O(M)$ be directed and $s\le\bigvee T$. Since $s$ is compact and $T$ is directed, $s\le t$ for some $t\in T$. Because $t\in\O(M)$, we have $t=\square t$. Thus, $t\in\alpha(s)$.
    \end{proof}       
       
    We next show that $\alpha$ preserves and reflects order.
    If $s,t\in\KS(M)$ with $s \ge t$, then it follows from definition that $\alpha(s) \subseteq \alpha(t)$. 
        For the converse, since $s$ is saturated, $s=\bigwedge\alpha(s)$, and similarly for $t$. Therefore, $\alpha(s) \subseteq \alpha(t)$ implies that $s=\bigwedge\alpha(s) \ge \bigwedge\alpha(t)=t$. Thus, $\alpha$ is an order-embedding. 
    It is left to prove that $\alpha$ is onto. 
    
        \begin{claim}\label{HM porism}
    If $F \in \SFilt(M)$ then $\bigwedge F \in \KS(M)$.
    \end{claim} 
    
    \begin{proof}
    We first show that $s:=\bigwedge F$ is compact. Let $T\subseteq\O(M)$ with $s\le\bigvee T$. By the Keimel-Paseka Lemma, $\bigvee T\in F$. Since 
        $F$ is Scott-open, $s\le \bigvee T_0$ for some finite $T_0\subseteq T$ (see \cref{rem: Scott-open filter}). Therefore, $s$ is compact. Moreover, since $\bigwedge F=\left(\bigwedge F\right) \cap \O(M)$, we conclude that $s$ is saturated. Thus, $s\in\KS(M)$. 
    \end{proof}
        
    Using the Keimel-Paseka Lemma yields that $\alpha(s) = F$. Consequently, $\alpha$ is onto, hence an order-isomoprhism.
    \end{proof}
        				
We next connect the Hofmann-Mislove theorem for frames 
to \cref{Hofmann-Mislove}. For this we require the following lemma. For a frame $L$, let $\SFilt(L)$ be the set of Scott-open filters of $L$.

\begin{lemma} \label{Hofmann-Mislove for frames}
 For a frame $L$, the posets $(\SFilt(L),\subseteq)$ and $(\SFilt(\Omega(\pt(L))),\subseteq)$ are isomorphic.
 \end{lemma}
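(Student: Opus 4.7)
The plan is to deduce the isomorphism by combining two applications of the Hofmann--Mislove theorem for frames \cite[Thm.~8.2.5]{Vickers}, glued together using the fact that $\pt(L)$ is always sober. A direct argument through the surjective frame map $\zeta:L\to\Omega(\pt(L))$ seems to require essentially the same machinery (one would need to show that every Scott-open filter of $L$ is saturated under the kernel of $\zeta$, i.e.\ $\zeta^{-1}[\zeta[F]]=F$), so it is cleaner to go through the point-set side.

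Concretely, I would first apply Vickers's Hofmann--Mislove theorem to $L$ and then to $\Omega(\pt(L))$ in turn, producing order isomorphisms
\[
(\SFilt(L),\subseteq)\cong(\KS(\pt(L)),\supseteq) \quad\text{and}\quad (\SFilt(\Omega(\pt(L))),\subseteq)\cong(\KS(\pt(\Omega(\pt(L)))),\supseteq),
\]
where in the first case a Scott-open filter $F$ is sent to $\bigcap\{\zeta(a)\mid a\in F\}$, and analogously in the second.

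To bridge the two right-hand sides, I would use that $\pt(L)$ is sober: this is a standard property of the space of points of any frame, and one can see it either directly or by noting that $\delta:\pt(L)\to\pt(\Omega(\pt(L)))$ is always a bijection and invoking \cref{spaces equivalence}. Since $\delta$ is then a homeomorphism, taking images and preimages along $\delta$ yields a poset isomorphism $(\KS(\pt(L)),\supseteq)\cong(\KS(\pt(\Omega(\pt(L)))),\supseteq)$.

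Composing the three isomorphisms gives the claim. I do not expect a serious obstacle; the only bookkeeping required is to confirm that the resulting composite coincides with the natural candidate $G\mapsto\zeta^{-1}[G]$ induced by $\zeta$, which falls out by unwinding the Hofmann--Mislove correspondence together with the fact that $\delta$ intertwines the two occurrences of $\zeta$. This verification is what makes the lemma usable later for transferring Scott-open filters between a frame and its spatialization.
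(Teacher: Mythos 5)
Your argument is mathematically sound if one is willing to take the frame-theoretic Hofmann--Mislove theorem (Vickers, Thm.~8.2.5) as an external black box: $\pt(L)$ is indeed sober, $\delta$ is then a homeomorphism onto $\pt(\Omega(\pt(L)))$, and composing the two instances of that theorem with the induced isomorphism on compact saturated sets gives the claim. The problem is what the lemma is for. In the paper it is precisely the bridge by which \cref{cor: HM for frames} --- which \emph{is} the statement of Vickers's Thm.~8.2.5 --- gets derived from the MT-algebraic Hofmann--Mislove theorem (\cref{Hofmann-Mislove}). If the lemma itself is proved by invoking Thm.~8.2.5 twice, that derivation becomes vacuous: you would be using the classical theorem to ``re-derive'' the classical theorem. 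So, while your proof establishes the bare statement, it is the wrong approach in context and cannot be substituted for the paper's.

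The paper's proof is exactly the direct route you set aside. It takes $\phi(H)=\zeta^{-1}[H]$; well-definedness and the order-embedding property are immediate, the latter because $\zeta$ is onto, so $H=\zeta[\zeta^{-1}[H]]$ --- this is the ``saturation under the kernel of $\zeta$'' point you worried about, and it costs nothing, rather than requiring ``essentially the same machinery.'' The only nontrivial part is surjectivity, for which the paper uses a more basic ingredient, namely that every Scott-open filter of a frame is the intersection of the completely prime filters containing it (Vickers, Lem.~8.2.2): given $F\in\SFilt(L)$ one forms $H=\bigcap\{H_x\mid F\subseteq x\}$ with $H_x=\{\zeta(a)\mid x\in\zeta(a)\}$, checks that $H$ is Scott-open, and verifies $\zeta^{-1}[H]=F$. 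Since that lemma is a distinct, citable fact rather than the full poset isomorphism, no circularity arises. If you wanted to keep your version honest you would have to supply a proof of Thm.~8.2.5 from scratch --- at which point you are back to doing the direct argument anyway. (Your closing remark about identifying the composite with $G\mapsto\zeta^{-1}[G]$ is also not needed: the lemma, and its later uses, only require the existence of an order-isomorphism.)
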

 
  \begin{proof}
  Define $\phi:\SFilt(\Omega(\pt(L)) \to \SFilt(L)$ by $\phi(H)=\zeta^{-1}[H]$. It is straightforward to see that $\zeta^{-1}[H]$ is a filter of $L$. To see that $\zeta^{-1}[H]$ is Scott-open, let $S\subseteq L$ be a directed family such that $\bigvee S \in \zeta^{-1}[H]$. Then $\bigcup 
 \left(\{\zeta(s) \mid s \in S\}\right) = \zeta(\bigvee S) \in H$. Since $H$ is Scott-open, there is $t \in S$ such that $\zeta(t) \in H$. Hence, $t \in \zeta^{-1}[H]$, and so $\phi$ is well defined. 

 We next show that $\phi$ is an order-embedding. Let $H, G \in\SFilt(\Omega(\pt(L))$. Clearly, $F \subseteq G$ implies $\zeta^{-1}[H] \subseteq \zeta^{-1}[G]$. Conversely, since $\zeta$ is onto, $\zeta^{-1}(H) \subseteq \zeta^{-1}(G)$ implies 
 $
 H=\zeta[\zeta^{-1}(H)] \subseteq \zeta[\zeta^{-1}(G)] = G.
 $

Finally, we show that $\phi$ is onto. Let $F \in \SFilt(L)$. Since Scott-open filters are intersections of completely prime filters (see, e.g., \cite[Lem.~8.2.2]{Vickers}), $F = \bigcap \{x \in \pt(L) \mid F \subseteq x\}$. For each $x \in \pt(L)$, define $H_x = \{\zeta(a) \mid x \in \zeta(a)\}$. Then each $H_x$ is a completely prime filter of $\Omega(\pt(L))$. Let $H = \bigcap \{H_x \mid F \subseteq x\}$. Clearly $H$ is a filter of $\Omega(\pt(L))$. We show it is Scott-open. 
Let $S \subseteq L$ be such that $\{\zeta(s) \mid s\in S\}$ is directed and $\bigcup \{\zeta(s) \mid s\in S\} \in H$. Since $\zeta$ is a frame homomorphism, without loss of generality we may assume that $S$ is directed.
We have $\bigcup \{\zeta(s) \mid s\in S\} \in H_x$ for each $x\supseteq F$, so 
$x \in \bigcup\{\zeta(s) \mid s \in S\} = \zeta(\bigvee S)$, and hence $\bigvee S\in x$. Since this is true for each $x\supseteq F$, we obtain that $\bigvee S\in F$, so there is $s\in S\cap F$ because $F$ is Scott-open.
Therefore, $\zeta(s) \in \bigcap \{H_x \mid F \subseteq x\} = H$, and so $H$ is Scott-open. It remains to prove that $\zeta^{-1}[H] = F$. 
We have:
\begin{align*}
  a\in \zeta^{-1}[H] &\iff 
  \zeta(a) \in H \iff \zeta(a) \in H_x \ \ \forall x \supseteq F 
  \iff x \in \zeta(a) \ \ \forall x \supseteq F \\
  &\iff a \in x \ \ \forall x \supseteq F \iff a \in F.
\end{align*}
Thus, $\zeta^{-1}[H] = F$, as required.
\end{proof} 
  
  For a topological space $X$, let $\KS(X):=\KS(\P(X),\int)$ be the set of compact saturated subsets of $X$.

\begin{corollary}\ 
\begin{enumerate}[ref=\thecorollary(\arabic*)]
\item {\em (Hofmann-Mislove for frames)}
For a frame $L$, the posets $(\SFilt(L),\subseteq)$ and $(\KS(\pt(L)),\supseteq)$ are isomorphic. \label[corollary]{cor: HM for frames}
\item {\em (Hofmann-Mislove for sober spaces)}
For a sober space $X$, the posets $(\SFilt(\Omega(X)),\subseteq)$ and $(\KS(X),\supseteq)$ are isomorphic. \label[corollary]{cor: HM for spaces}
\end{enumerate}
\end{corollary}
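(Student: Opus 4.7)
The plan is to deduce both parts from the MT-algebra Hofmann-Mislove theorem (\cref{Hofmann-Mislove}) together with \cref{Hofmann-Mislove for frames}. I would prove (2) first and then bootstrap (1) from it.

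\textbf{Part (2).} Given a sober space $X$, I would consider the MT-algebra $\P(X) = (\P(X),\int)$. By \cref{thm: MTSob and Sob}, the functor $\P$ restricts to $\Sob \to \SobSMT$, so $\P(X)$ is a sober spatial MT-algebra. \cref{Hofmann-Mislove} then gives an order-isomorphism
\[
(\SFilt(\P(X)),\subseteq) \cong (\KS(\P(X)),\ge).
\]
Now $\O(\P(X)) = \Omega(X)$, and as recalled in the paragraph preceding \cref{KP}, the map $F \mapsto F \cap \O(M)$ is an isomorphism between the posets of open filters of $M$ and of filters of $\O(M)$, which restricts to Scott-open filters. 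Hence $\SFilt(\P(X)) \cong \SFilt(\Omega(X))$, both ordered by inclusion. By definition, $\KS(X) = \KS(\P(X))$, and the order $\ge$ on elements of $\P(X)$ is just $\supseteq$ on subsets of $X$. Composing these identifications yields
\[
(\SFilt(\Omega(X)),\subseteq) \cong (\KS(X),\supseteq).
\]

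\textbf{Part (1).} For a frame $L$, the space $\pt(L)$ is sober (a standard pointfree topology fact; see, e.g., \cite[p.~19]{PicadoPultr2012}). By \cref{Hofmann-Mislove for frames},
\[
(\SFilt(L),\subseteq) \cong (\SFilt(\Omega(\pt(L))),\subseteq),
\]
and applying part (2) to the sober space $\pt(L)$ gives
\[
(\SFilt(\Omega(\pt(L))),\subseteq) \cong (\KS(\pt(L)),\supseteq).
\]
Composing these two isomorphisms yields the claim.

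\textbf{Main obstacle.} The only nontrivial point is ensuring that $\P(X)$ is a sober MT-algebra when $X$ is sober. This is handled by invoking \cref{thm: MTSob and Sob}, so no new argument is required. Everything else is routine bookkeeping: matching $\O(\P(X))$ with $\Omega(X)$, translating order-dualities correctly, and applying the standard sobriety of $\pt(L)$.
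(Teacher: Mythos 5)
Your proof is correct, and it relies on exactly the same ingredients as the paper's --- \cref{Hofmann-Mislove}, the Scott-open filter correspondence between an MT-algebra $M$ and $\O(M)$, \cref{thm: MTSob and Sob}, and \cref{Hofmann-Mislove for frames} --- but with the logical order of the two parts reversed. The paper proves (1) first, by applying \cref{Hofmann-Mislove} to the sober MT-algebra $\P(\pt(L))$ and then invoking \cref{Hofmann-Mislove for frames}, and it deduces (2) from (1) via the homeomorphism $X \cong \pt(\Omega(X))$ for sober $X$. You instead prove (2) directly by applying \cref{Hofmann-Mislove} to $\P(X)$ itself, which makes the soberification homeomorphism unnecessary, and then obtain (1) by composing \cref{Hofmann-Mislove for frames} with (2) applied to the sober space $\pt(L)$. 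Both routes are valid and of essentially equal length; yours is marginally more economical for part (2), while the paper's keeps part (1) self-contained. Your bookkeeping --- the identification $\O(\P(X)) = \Omega(X)$, the fact that $\KS(X)$ is by definition $\KS(\P(X),\int)$, and the translation of $\ge$ on $\KS(\P(X))$ into $\supseteq$ on subsets --- is accurate.
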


\begin{proof}
(1) Since
$\pt(L)$ is a sober space, $\P(\pt(L))$ is a sober MT-algebra by \cref{thm: MTSob and Sob}. Therefore, by \cref{Hofmann-Mislove}, $(\SFilt(\P(\pt(L))),\subseteq)$ is isomorphic to $(\KS(\P(\pt(L))),\supseteq)=(\KS(\pt(L)),\supseteq)$. But 
$(\SFilt(\P(\pt(L))),\subseteq)$ is isomorphic to $(\SFilt(\O(\P(\pt(L))),\subseteq)=(\SFilt(\Omega(\pt(L)),\subseteq)$. Thus, the result follows
from \cref{Hofmann-Mislove for frames}

(2) If $X$ is a sober space, then $X$ is homeomorphic to $\pt(\Omega(X))$ (see, e.g., 
\cite[p.~20]{PicadoPultr2012}). Now apply~(1). 
\end{proof}

\begin{remark}
A natural approach to \cref{cor: HM for frames} would be to take 
$\overline{B(L)}$ and apply \cref{Hofmann-Mislove} to it. However, $\overline{B(L)}$ may not be a sober MT-algebra, and hence \cref{Hofmann-Mislove} may not apply. For example, consider $\mathbb{R}$ with the right ray topology $\tau = \{(a, \infty) \mid a \in \mathbb{R}\} \cup \{\varnothing,\mathbb{R}\}$. 
Since elements of $B(\Omega(X))$ are finite unions of sets of the form $(-\infty, a] \cap (b , \infty)$, where $a , b \in \mathbb{R}$, 
we see that $B(\Omega(\mathbb{R}))$ is atomless. Therefore, $\overline{B(\Omega(\mathbb{R})}$ is atomless. 
Thus, $\overline{B(\Omega(\mathbb{R})}$ is not sober because $\mathbb R$ is a join-irreducible closed element of $\overline{B(\Omega(\mathbb{R})}$ that is not the closure of any atom.
\end{remark}

 \section{Locally compact MT-algebras and Hofmann-Lawson duality} \label{sec: H-L}
 
  Hofmann-Lawson duality 
  \cite[Thm.~9.6]{HofmannLawson1978} establishes that the category of continuous frames 
  is dually equivalent to the category of locally compact sober spaces. 
   In this section we 
  develop a version of Hofmann-Lawson duality 
  for MT-algebras.

\begin{definition}     Let $M\in\MT$. 
    \begin{enumerate}
    \item For $a,b \in M$ define $a \nest b$ if there is $k \in \K(M)$ such that $ a \leq k \leq b$.

    \item We call $M$ \emph{locally compact} if for all $u \in \O(M)$ we have 
    $
    u=\bigvee \{v \in \O(M) \mid v \nest u \}.
    $
    \end{enumerate}
\end{definition}

\begin{remark}\label{nest on opens}
If $b$ in the above definition is open, then the $k\in K(M)$ such that $a\le k\le b$ can be chosen to be saturated. Indeed, set  
$k'=\bigwedge\{u \in \O(M) \mid k \leq u \}$. Then $k'$ is saturated  by definition, and it is compact because $k$ is compact. Moreover, $k'\le b$ since $b$ is open. Thus, $k'\in\KS(M)$ and 
$a \leq k' \leq b$.  
\end{remark}

The next lemma is straightforward to prove.

\begin{lemma} 
Let $M\in\MT$ and $a,b,c,d\in M$. \begin{enumerate}[ref=\thelemma(\arabic*)]
\item  $a,b \in \K(M) \implies a \vee b \in \K(M)$. \label[lemma]{join of compacts is compact}
\item $a \nest b \implies a \leq b$.
\item $a \leq b \nest c \leq d \implies a \nest d$. \label [lemma] {transitive}
\item $a \nest b$ and $c \nest d \implies (a \vee c) \nest (b \vee d)$. \label [lemma]{preserved under join}
\end{enumerate}
\end{lemma}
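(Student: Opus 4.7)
The plan is to take the four parts essentially in the order (2), (1), (3), (4), since (2) is immediate from the definition, (3) just chains inequalities through the witness, and (4) is obtained by combining witnesses using (1).

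For (2), if $a \nest b$, then by definition there exists $k \in \K(M)$ with $a \leq k \leq b$, so $a \leq b$ by transitivity of $\leq$.

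For (1), I would argue directly from the definition of compactness. Suppose $a, b \in \K(M)$ and $S \subseteq \O(M)$ satisfies $a \vee b \leq \bigvee S$. Then $a \leq \bigvee S$ and $b \leq \bigvee S$, so by compactness of $a$ and $b$ there are finite subsets $T_a, T_b \subseteq S$ with $a \leq \bigvee T_a$ and $b \leq \bigvee T_b$. Then $T_a \cup T_b$ is a finite subset of $S$ with $a \vee b \leq \bigvee (T_a \cup T_b)$, witnessing that $a \vee b \in \K(M)$.

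For (3), suppose $a \leq b \nest c \leq d$. Pick $k \in \K(M)$ witnessing $b \nest c$, i.e.\ $b \leq k \leq c$. Then $a \leq b \leq k \leq c \leq d$, so the same $k$ witnesses $a \nest d$.

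For (4), suppose $a \nest b$ and $c \nest d$ with witnesses $k_1, k_2 \in \K(M)$ satisfying $a \leq k_1 \leq b$ and $c \leq k_2 \leq d$. Then $a \vee c \leq k_1 \vee k_2 \leq b \vee d$, and $k_1 \vee k_2 \in \K(M)$ by part~(1), so $k_1 \vee k_2$ witnesses $(a \vee c) \nest (b \vee d)$. There is no real obstacle here; the only step requiring any work is the closure of compact elements under binary joins in (1), and everything else is a one-line chase through the definition of $\nest$.
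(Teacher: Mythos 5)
Your proof is correct and is exactly the routine verification the authors have in mind — the paper omits the proof entirely, remarking only that the lemma is straightforward. All four parts check out, including the key observation that a finite union of finite subcovers witnesses compactness of $a\vee b$ in part~(1).
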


The next theorem is a direct generalization of \cite[Lem.~3.6]{HofmannLawson1978} to MT-algebras and we skip its proof.

\begin{theorem}\label{CF}
    Let $M\in\MT$ and $a,b\in\O(M)$.
    \begin{enumerate}[ref=\thelemma(\arabic*)]
    \item If $a \nest b$, then $a\ll b$. \label[theorem]{way below doesn't imply triangle}
    \item If $M$ is locally compact, then $a \nest b$ iff $a\ll b$. \label[theorem]{lcandcont}
        \end{enumerate}
\end{theorem}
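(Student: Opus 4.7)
My plan is to prove the two parts directly from the definitions, leveraging the elementary properties of $\nest$ already collected in the preceding lemma.

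For part~(1), I would argue as follows. Assume $a \nest b$ with $a, b \in \O(M)$, and pick $k \in \K(M)$ witnessing this, so $a \le k \le b$. To check $a \ll b$, take any $S \subseteq \O(M)$ with $b \le \bigvee S$. Then $k \le \bigvee S$, and since $k$ is compact and $S \subseteq \O(M)$, the definition of compactness yields a finite $T \subseteq S$ with $k \le \bigvee T$. Hence $a \le k \le \bigvee T$, proving $a \ll b$. This argument does not use local compactness.

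For part~(2), the forward implication is just~(1). For the converse, assume $M$ is locally compact and $a \ll b$ in $\O(M)$. Local compactness gives
\[
b = \bigvee \{v \in \O(M) \mid v \nest b\},
\]
and the way-below relation, applied to this family $S \subseteq \O(M)$, produces finitely many $v_1,\dots,v_n \nest b$ with $a \le v_1 \vee \cdots \vee v_n$. The key step is then to observe, by an easy induction using \cref{preserved under join}, that $v_1 \vee \cdots \vee v_n \nest b \vee \cdots \vee b = b$. Combining this with $a \le v_1 \vee \cdots \vee v_n \nest b$ and applying \cref{transitive} yields $a \nest b$.

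Honestly, the theorem is essentially bookkeeping once the right lemmas are in hand: compactness of the witness $k$ handles~(1), and the combination of local compactness (to generate a cover by $\nest$-predecessors), the way-below property (to extract a finite subcover), and closure of $\nest$ under finite joins (to reassemble a single witness) handles~(2). The only mild subtlety is recognising that one must pass from the individual $v_i \nest b$ to their join being $\nest b$, rather than being $\nest$ something smaller; this is where \cref{preserved under join} together with $b \vee b = b$ is used, and it is the single nonformal step of the argument.
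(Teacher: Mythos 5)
Your proof is correct and is exactly the standard argument the paper has in mind: the authors explicitly skip the proof, citing it as a direct generalization of \cite[Lem.~3.6]{HofmannLawson1978}, and your two steps (compactness of the interpolant $k$ for~(1); local compactness to cover $b$ by $\nest$-predecessors, way-below to extract a finite subcover, and \cref{preserved under join} with \cref{transitive} to reassemble a single witness for~(2)) reproduce that argument faithfully. The only point you leave implicit is the degenerate case where the finite subcover is empty, forcing $a=0$, which is handled since $0$ is trivially compact and hence $0\nest b$.
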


\begin{remark}
In general, the converse of \cref{way below doesn't imply triangle} is false, already for spatial MT-algebras (see, e.g., \cite[p.~ 309]{Johnstone}).  
\end{remark}

Recall (see e.g., \cite[p.~135]{PicadoPultr2012}) that a frame $L$ is \emph{continuous} if $a = \bigvee \{x \in L \mid x \ll a\}$ for each $a \in L$.
      We call a frame homomorphism $f : L_1 \to L_2$ {\em proper}
      if $a \ll b$ implies $f(a) \ll f(b)$  for all $a,b \in L_1$. Let $\CFrm$ be the category of continuous frames and proper frame homomorphisms. The next result is crucial for proving Hofmann-Lawson duality, and will also play an important role in what follows. 
   
\begin{theorem} 
$ $
\begin{enumerate}[ref=\thetheorem(\arabic*)] 
\item {\em \cite[Lem.~VII.6.3.2]{PicadoPultr2012}} Let $L$ be a continuous frame and $b \ll a$ in $L$. Then there is a Scott-open filter $F$ of $L$ such that $a \in F$ and $F \subseteq \upset{b}$.\label[theorem]{lemma to prove spatial} 
    \item {\em \cite[Prop.~VII.6.3.3]{PicadoPultr2012}} Each continuous frame is spatial.\label[theorem]{continuous}
    \end{enumerate}    
\end{theorem}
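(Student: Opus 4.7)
The plan is to prove both parts using the \emph{interpolation property} of the way-below relation in a continuous frame: if $b \ll a$ then there is $c$ with $b \ll c \ll a$. This property follows from the directedness of $\{x \in L : x \ll a\}$ together with the definition of $\ll$, and is the main tool throughout.

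For Part~(1), note first that the set $U = \{x \in L : b \ll x\}$ is Scott-open: if $\bigvee S \in U$ for a directed $S$, interpolate $b \ll c \ll \bigvee S$; then $c \leq s$ for some $s \in S$ by directedness, whence $b \ll s$ and so $s \in U$. Moreover, $a \in U$ and $U \subseteq \upset b$. The snag is that $U$ is only an up-set; it need not be closed under finite meets, so it is not automatically a filter. The approach would then be to extract a genuine Scott-open filter $F$ sitting inside $U$ and still containing $a$. One strategy is to iterate interpolation starting from $b \ll a$ to build a rich supply of way-below witnesses and explicitly carve out a filter base from this data; another is to invoke Zorn's lemma on a suitable poset of Scott-open pre-filters inside $U$ containing $a$, taking care that directed unions preserve both the filter and Scott-open conditions.

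For Part~(2), given $a \not\leq c$ in the continuous frame $L$, continuity gives $a = \bigvee\{x : x \ll a\}$, so some $b \ll a$ must fail to lie below $c$. Apply Part~(1) to obtain a Scott-open filter $F$ with $a \in F$ and $F \subseteq \upset b$; then $c \notin F$. A standard Zorn argument now produces a Scott-open filter $p \supseteq F$ maximal with respect to $c \notin p$. By the distributivity of $L$, maximality forces $p$ to be prime, and Scott-openness upgrades primality to complete primality (every directed sup can be replaced by a finite join, which then splits). Hence $p$ is a point separating $a$ from $c$, and since this works for every pair with $a \not\leq c$, $L$ is spatial.

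The hard part is Part~(1): marrying the Scott-open condition (handled cleanly by interpolation) with the finite-meet-closure axiom (which fails for the natural candidate $U$). Constructing a filter inside $U$ by either a transfinite interpolation scheme or a Zorn-type maximality argument, and verifying that Scott-openness survives the construction, is the delicate technical step; Part~(2) is then a comparatively routine distillation of points from Scott-open filters.
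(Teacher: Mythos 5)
The paper does not prove this statement at all: both parts are imported verbatim from Picado--Pultr (Lem.~VII.6.3.2 and Prop.~VII.6.3.3), so the only comparison available is with the standard proof in that source, which is exactly the iterated-interpolation argument you gesture at. Your Part~(2) is fine in outline and matches the standard route: a Zorn-maximal Scott-open filter extending $F$ and missing $c$ is prime (the usual device is to pass from a maximal $P$ with $u\vee v\in P$, $u,v\notin P$ to the strictly larger Scott-open filter $\{z\mid z\vee v\in P\}$ and derive a contradiction via frame distributivity), and Scott-openness plus primeness gives complete primeness after closing an arbitrary family under finite joins to make it directed. Those details are compressible and I would accept them as routine.

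The genuine gap is in Part~(1), which you yourself flag as ``the delicate technical step'' and then leave unexecuted. Neither of your two proposed strategies is carried out, and the Zorn-type one is the wrong direction: a maximal Scott-open up-set inside $U=\{x\mid b\ll x\}$ containing $a$ has no mechanism forcing closure under finite meets, so it is unclear what poset you would apply Zorn to or why a maximal element would be a filter. The missing idea is a single concrete construction. Use interpolation countably often to build a chain $a=c_0$ with $b\ll c_{n+1}\ll c_n$ for all $n$, and set $F=\bigcup_n \upset c_n$. Since $c_{n+1}\le c_n$, the principal filters $\upset c_n$ form an increasing chain, so their union is a filter; $a=c_0\in F$; and $b\le c_n$ for all $n$ gives $F\subseteq\upset b$. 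Scott-openness is where the interpolating chain earns its keep: if $S$ is directed and $\bigvee S\in F$, then $c_n\le\bigvee S$ for some $n$, hence $c_{n+1}\ll c_n\le\bigvee S$ forces $c_{n+1}\le s$ for some $s\in S$, so $s\in F$. This five-line argument is the entire content of Part~(1); without it your proposal identifies the obstruction (that $U$ is Scott-open but not a filter) without overcoming it.
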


      We relate locally compact MT-algebras to continuous frames.
      
\begin{theorem} \label{CFrm=LC}
    Let $M\in\MT$. 
    \begin{enumerate} [ref=\thelemma(\arabic*)] 
    \item If $M$ is locally compact, then $\O(M)$ is a continuous frame. \label[lemma]{LC implies CF} 
    \item If $M$ is sober and $\O(M)$ is a continuous frame, then $M$ is locally compact. \label[lemma]{CF implies LC}
    \end{enumerate}
\end{theorem}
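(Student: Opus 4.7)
\textbf{Proof plan for \cref{CFrm=LC}.}

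For part (1), the plan is simply to combine the definition of local compactness with \cref{way below doesn't imply triangle}. Fix $u\in\O(M)$. Local compactness gives $u=\bigvee\{v\in\O(M)\mid v\nest u\}$, and each such $v$ satisfies $v\ll u$ in $\O(M)$ by \cref{way below doesn't imply triangle}. Hence
\[
u \;=\; \bigvee\{v\in\O(M)\mid v\nest u\} \;\le\; \bigvee\{v\in\O(M)\mid v\ll u\} \;\le\; u,
\]
where the last inequality holds because $v\ll u$ implies $v\le u$. So $\O(M)$ is continuous.

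For part (2), the key idea is to use the Hofmann-Mislove theorem for sober MT-algebras (\cref{Hofmann-Mislove}) to manufacture a compact saturated witness for $\nest$. Fix $u\in\O(M)$. Since $\O(M)$ is continuous, $u=\bigvee\{v\in\O(M)\mid v\ll u\}$. It therefore suffices to show $v\nest u$ whenever $v\ll u$ in $\O(M)$. Given such $v$, apply \cref{lemma to prove spatial} to obtain a Scott-open filter $F$ of $\O(M)$ with $u\in F$ and $F\subseteq{\upset}v$. Its upward closure $\upset F$ in $M$ is a Scott-open filter of $M$ (this is the standard correspondence between open filters of $M$ and filters of $\O(M)$, which restricts to Scott-open filters, as noted just before the Keimel-Paseka Lemma).

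Since $M$ is sober, \cref{Hofmann-Mislove} supplies a compact saturated element $k:=\bigwedge(\upset F)=\bigwedge F \in \KS(M)$ corresponding to $\upset F$. From $u\in F$ we get $k\le u$, and from $F\subseteq\upset v$ we get $v\le w$ for every $w\in F$, so $v\le\bigwedge F=k$. Hence $v\le k\le u$ with $k\in\K(M)$, i.e., $v\nest u$. Combining this with continuity yields
\[
u \;=\; \bigvee\{v\in\O(M)\mid v\ll u\} \;\le\; \bigvee\{v\in\O(M)\mid v\nest u\} \;\le\; u,
\]
so $M$ is locally compact.

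The only nontrivial step is the passage from the way-below relation to a genuine compact witness in part (2); this is precisely where sobriety is used, through \cref{Hofmann-Mislove}. The rest is routine bookkeeping with the $\nest$/$\ll$ relationship established in \cref{CF} and the filter correspondence between $M$ and $\O(M)$.
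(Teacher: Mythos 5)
Your proposal is correct and follows essentially the same route as the paper: part (1) is the definition of local compactness combined with \cref{way below doesn't imply triangle}, and part (2) uses \cref{lemma to prove spatial} to produce a Scott-open filter $F$ with $u\in F$ and $F\subseteq\upset v$, then extracts the compact witness $k=\bigwedge F$ via the Hofmann--Mislove machinery (the paper cites \cref{HM porism} directly, and you make explicit the passage from a Scott-open filter of $\O(M)$ to one of $M$, which the paper leaves implicit). No gaps.
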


\begin{proof}
    (1) Let $u \in \O(M)$. Since $M$ is locally compact, $u = \bigvee \{v \in \O(M) \mid v \nest u\}$, and hence $u = \bigvee \{v \in \O(M) \mid v \ll u\}$ by \cref{CF}(1). Thus, $\O(M)$ is a continuous frame.
    
    (2)     Let $u \in \O(M)$. Since $\O(M)$ is a continuous frame, $u=\bigvee\{v \in \O(M) \mid v \ll u \}$. We prove that $v \ll u$ implies $v \nest u$. 
        By \cref{lemma to prove spatial},
        from $v \ll u$ it follows that there is a Scott-open filter $F\subseteq \O(M)$ such that $u \in F$ and $F\subseteq{\uparrow}v$. 
                                    	By \cref{HM porism}, 
        $k:=\bigwedge F$ is a compact element of $M$. 
                        Therefore, $v \le k \le u$, which gives that $v \nest u$. Thus, $u = \bigvee \{v \in \O(M) \mid v \nest u\}$, and hence 
            $M$ is locally compact.
    \end{proof}

Let $X$ and $Y$ be locally compact sober spaces. Recall (see, e.g., \cite[Lem.~VI-6.21(ii)]{Compendium}) that a continuous map $f:X \to Y$ is {\em proper} if $ f^{-1}(K) $ is compact for each compact saturated $K \subseteq Y$. We generalize this to MT-morphisms. 

\begin{definition}\label{proper morphisms}
    Let $M,N$ be locally compact sober MT-algebras. We call an MT-morphism $f:M \to N$ {\em proper} if $a \in \KS(M)$ implies $f(a) \in \KS(N)$.
\end{definition}

\begin{remark} \label{proper}
Since every MT-morphism maps saturated elements to saturated elements, the condition in \cref{proper morphisms} is equivalent to $a \in \KS(M)$ implies $f(a) \in \K(N)$.
\end{remark}

Let $\LCSobMT$ be the category of locally compact sober MT-algebras and proper MT-morphisms, and let $\LCSobSMT = \LCSobMT \cap \SMT$. 
   
   \begin{lemma}\label{O}
       The restriction $\O : \LCSobMT\to \CFrm$ is well defined.
   \end{lemma}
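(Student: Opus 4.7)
The plan is to verify the two components of well-definedness for the restricted functor: first, that $\O$ sends each object of $\LCSobMT$ to an object of $\CFrm$ (i.e., a continuous frame), and second, that $\O$ sends each proper MT-morphism to a proper frame homomorphism. Both parts should follow by assembling results already proved earlier in the section.

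For the object part, let $M \in \LCSobMT$. Then $M$ is locally compact, so by \cref{LC implies CF}, the frame $\O(M)$ is continuous. (Note that soberness is not needed for this implication.) Hence $\O(M) \in \CFrm$.

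For the morphism part, let $f : M \to N$ be a proper MT-morphism between locally compact sober MT-algebras. By \cref{restriction to frame homomorphism}, the restriction $\O(f) = f|_{\O(M)} : \O(M) \to \O(N)$ is a frame homomorphism, so it suffices to verify that it preserves the way-below relation. Suppose $a \ll b$ in $\O(M)$. Since $M$ is locally compact and $a, b \in \O(M)$, \cref{lcandcont} gives $a \nest b$ in $M$, so there exists $k \in \K(M)$ with $a \le k \le b$. Because $b$ is open, \cref{nest on opens} lets us take $k \in \KS(M)$. Since $f$ is proper, $f(k) \in \KS(N) \subseteq \K(N)$, and since $f$ is monotonic we have $f(a) \le f(k) \le f(b)$, so $f(a) \nest f(b)$ in $N$. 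Finally, applying \cref{way below doesn't imply triangle} in $N$ yields $f(a) \ll f(b)$ in $\O(N)$, as required.

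There is no real obstacle here; the lemma is a bookkeeping consequence of \cref{CFrm=LC,CF,nest on opens} together with the definition of properness for MT-morphisms. The only point that could trip one up is remembering to upgrade $k \in \K(M)$ to $k \in \KS(M)$ (using that $b$ is open) in order to apply properness of $f$, which by \cref{proper morphisms} is formulated for compact saturated elements rather than arbitrary compact elements.
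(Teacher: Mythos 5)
Your proof is correct and follows essentially the same route as the paper's: objects via \cref{LC implies CF}, morphisms via \cref{restriction to frame homomorphism}, \cref{lcandcont}, properness of $f$, and \cref{way below doesn't imply triangle}. You are in fact slightly more careful than the paper, which applies properness directly to a $k\in\K(M)$ without first invoking \cref{nest on opens} to replace it by a compact \emph{saturated} element as \cref{proper morphisms} formally requires.
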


   \begin{proof}
   That $\O$ is well defined on objects follows from 
       \cref{LC implies CF}. 
              We show that $\O$ is well defined on morphisms. Let $M,N \in \LCSobMT$ and $f: M \to N$ be a proper MT-morphism. By \cref{restriction to frame homomorphism},
              the restriction $f|_{\O(M)}:\O(M)\to\O(N)$ is a frame homomorphism. Suppose $a,b\in\O(M)$ with $a \ll b$. 
              Then $a \nest b $ by \cref{lcandcont}. Therefore, there is $k \in \K(M)$ with $a \leq k \leq b$. 
              Thus, $f(a) \leq f(k) \leq f(b)$. Since $f$ is proper, $f(k)\in \K(N)$, and hence $f(a) \nest f(b)$. Consequently, $f(a) \ll f(b)$, and so the restriction $f|_{\O(M)}$ is a proper frame homomorphism, yielding that $\O : \LCSobMT\to \CFrm$ is well defined.
       \end{proof}

       \begin{lemma} \label{lem: P circ pt}
       The composition $\P\circ \pt: \CFrm \to \LCSobMT$ is well defined.
       \end{lemma}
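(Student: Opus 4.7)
My plan is to verify well-definedness separately on objects and on morphisms. For an object $L\in\CFrm$, continuity forces $L$ to be spatial by \cref{continuous}, so $\zeta\colon L\to\Omega(\pt(L))$ is an isomorphism by \cref{frames equivalence}; consequently $\Omega(\pt(L))$ is a continuous frame. Since $\pt(L)$ is a sober space, \cref{thm: MTSob and Sob} gives that $\P(\pt(L))$ is a sober MT-algebra, and combined with $\O(\P(\pt(L)))=\Omega(\pt(L))$ being continuous, \cref{CF implies LC} then yields local compactness of $\P(\pt(L))$. Hence $\P(\pt(L))\in\LCSobMT$.

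For morphisms, let $h\colon L_1\to L_2$ be a proper frame homomorphism. Then $\pt(h)\colon\pt(L_2)\to\pt(L_1)$ is continuous, so $\P(\pt(h))=\pt(h)^{-1}$ is an MT-morphism; by \cref{proper} it remains to show that $K\in\KS(\pt(L_1))$ implies $\pt(h)^{-1}(K)\in\K(\pt(L_2))$. I would translate through Scott-open filters using \cref{cor: HM for frames}: $K$ corresponds to $F=\{a\in L_1\mid K\subseteq\zeta(a)\}\in\SFilt(L_1)$ with $K=\bigcap_{a\in F}\zeta(a)$. Set $F':=\upset h[F]\subseteq L_2$. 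Since $h$ preserves finite meets, $F'$ is a filter. To see it is Scott-open, suppose $\bigvee S\in F'$, so $h(a)\le\bigvee S$ for some $a\in F$; continuity of $L_1$ writes $a$ as the directed join $\bigvee\{b\mid b\ll a\}$, and Scott-openness of $F$ then produces $b\in F$ with $b\ll a$, whence properness of $h$ gives $h(b)\ll h(a)\le\bigvee S$ and hence $h(b)\le\bigvee T$ for some finite $T\subseteq S$, so $\bigvee T\in F'$.

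Now \cref{cor: HM for frames} applied to $L_2$ makes $\bigcap_{c\in F'}\zeta(c)$ compact saturated in $\pt(L_2)$, and a direct computation using $\pt(h)^{-1}(\zeta(a))=\zeta(h(a))$ together with $F'=\upset h[F]$ gives $\pt(h)^{-1}(K)=\bigcap_{a\in F}\zeta(h(a))=\bigcap_{c\in F'}\zeta(c)$, finishing the argument. I expect the principal obstacle to be the Scott-openness of $F'$: this is precisely the step where properness of $h$ (preservation of $\ll$) and continuity of the domain $L_1$ (to approximate $a\in F$ from below inside $F$) both enter essentially. Without either hypothesis the bridge between the Hofmann-Mislove correspondences on $L_1$ and $L_2$ would break, so all the heavy lifting is concentrated there.
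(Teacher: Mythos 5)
Your proposal is correct and follows essentially the same route as the paper: spatiality of continuous frames plus \cref{CF implies LC} for objects, and for morphisms the pushforward of the Scott-open filter corresponding to $K$ along $h$, with continuity of $L_1$ and properness of $h$ combining to give Scott-openness, concluded via Hofmann--Mislove. The only (immaterial) difference is that you work with $\upset h[F]$ in $\SFilt(L_2)$ and invoke \cref{cor: HM for frames}, whereas the paper works with the corresponding filter $\upset\{\zeta(h(a)) \mid K \subseteq \zeta(a)\}$ directly in $\P(\pt(L_2))$ and invokes \cref{HM porism}.
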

       
       \begin{proof}
       Let $L \in \CFrm$. 
              Then $\pt(L)$ is sober (see, e.g., \cite[p.~20]{PicadoPultr2012}),
              and hence $\P(\pt(L))$ is a sober MT-algebra by \cref{thm: MTSob and Sob}.
              Since $L$ is continuous, $L$ is spatial by \cref{continuous},  
                     and so $L \cong \Omega(\pt(L))$. Therefore, $\Omega(\pt(L))$ is continuous and \cref{CF implies LC} implies that $\P(\pt(L))$ is locally compact. Thus, $\P\circ\pt$ is well defined on objects. 
       
       To show that $\P\circ\pt$ is well defined on morphisms, let $L_1,L_2 \in \CFrm$ and $h: L_1\to L_2$ be a proper frame homomorphism. Then $f:=(h^{-1})^{-1} : \P(\pt(L_1)) \to \P(\pt(L_2))$ is an MT-morphism. To see that $h$ is proper, let $K \in \KS(\P(\pt(L_1)))$. Then 
       \begin{eqnarray*}
       f[K] &=& f\left[\bigcap\{\zeta(a) \mid K\subseteq\zeta(a)\}\right] = (h^{-1})^{-1}\left[\bigcap\{\zeta(a) \mid K\subseteq\zeta(a)\}\right] \\
       &=& \bigcap\left\{ (h^{-1})^{-1}\zeta(a) \mid K\subseteq\zeta(a)\right\} = \bigcap \left\{\zeta(h(a)) \mid K\subseteq\zeta(a) \right\},
                            \end{eqnarray*}
       where the last equality holds because $\zeta$ is 
       a natural transformation (see, e.g., \cite[p.~17]{PicadoPultr2012}). We show that $F:=\upset \{\zeta(h(a)) \mid K \subseteq \zeta(a) \} $ is a Scott-open filter of $\P(\pt(L_2))$. Clearly $F$ is an open filter. Suppose $\bigcup_i \zeta(m_i) \in F$. Then there is  $a \in L_1$ such that $K \subseteq \zeta(a)$ and $\zeta(h(a)) \subseteq \bigcup_i \zeta(m_i)$. Since $L_1$ is continuous, 
              $a = \bigvee \{ b \in L_1 \mid b \ll a \}$. Therefore, $K \subseteq \zeta\left(\bigvee \{ b \in L_1 \mid b \ll a \} \right) = \bigcup \{ \zeta(b) \mid b \ll a \}$, 
              so there exist $b_1,\dots,b_n \ll a$ such that $K \subseteq \bigcup_{i=1}^n \zeta(b_i) = \zeta\left(\bigvee_{i=1}^n b_i\right)$. Set $c = \bigvee_{i=1}^n b_i$. Then $K \subseteq \zeta(c)$ and $c \ll a$, so $h(c) \ll h(a)$ because $h$ is proper. But then $\zeta(h(c)) \ll \zeta(h(a))$ since $\zeta$ is an isomorphism. Thus, $\zeta(h(c)) \ll \zeta(h(a)) \subseteq \bigcup_i \zeta(m_i)$, and hence $\zeta(h(c)) \subseteq \bigcup_{j=1}^t \zeta(m_j)$. Consequently, $\bigcup_{j=1}^t \zeta(m_j)\in F$, and so $F$ is Scott-open. 
Since $f[K] = \bigcap F$, we conclude that $f[K]$ is compact saturated by \cref{HM porism}, and hence $f$ is a proper map. This yields that the composition $\P\circ \pt: \CFrm \to \LCSobMT$ is well defined.
       \end{proof}
        
       \begin{theorem}\label{CFrm adjunction} 
       \
       \begin{enumerate}[ref=\thetheorem(\arabic*)]
       \item The functors $\O$ and $\P\circ\pt$ are adjoint, $\O$ to the left and $\P\circ\pt$ to the
right, with the unit $(\P\vartheta)\circ\eta : 1_{\LCSobMT} \to(\P\circ\pt)\circ\O$ and co-unit $\zeta^{-1}:\O \circ(\P\circ\pt) \to 1_{\CFrm}$.
	\item The adjunction between $\LCSobMT$ and $\CFrm$ restricts to an equivalence between $\LCSobSMT$ and $\CFrm$. \label[theorem]{SMT_LS reflective subcategory}
	\end{enumerate}
\end{theorem}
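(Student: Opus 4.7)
The plan is to define the unit and counit as described, verify naturality and the triangle identities, and then show the unit and counit become isomorphisms when restricted to $\LCSobSMT$. First I would disambiguate the notation for the unit. Since $\vartheta_M:\at M\to\pt(\O M)$ is a homeomorphism for $M$ sober (\cref{vartheta homeomorphism}), the MT-morphism $\P(\vartheta_M):\P(\pt(\O M))\to\P(\at M)$ is an isomorphism, and I read the stated unit as
\[
u_M := \P(\vartheta_M)^{-1}\circ\eta_M : M \longrightarrow \P(\at M) \longrightarrow \P(\pt(\O M)),
\]
where $\eta_M$ is the MT-adjunction unit from \cref{MT adjunction}. The counit $c_L := \zeta_L^{-1}:\O(\P\pt L)=\Omega(\pt L)\to L$ is well-defined because continuous frames are spatial (\cref{continuous}), so $\zeta_L$ is an isomorphism by \cref{frames equivalence}.

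Naturality of $u$ follows from naturality of $\eta$ (from the $(\P,\at)$ adjunction) together with naturality of $\vartheta$; naturality of $c$ follows from naturality of $\zeta$ in the classical frame-space dual adjunction. For the first triangle identity, the central calculation is that for $a\in\O M$,
\[
u_M(a) = \{\vartheta(x)\mid x\in\at M,\ x\le a\} = \{p\in\pt(\O M)\mid a\in p\} = \zeta_{\O M}(a),
\]
where the middle equality uses that $\vartheta$ is a bijection onto $\pt(\O M)$ and that, for $p=\vartheta(x)$, we have $a\in p$ iff $x\le a$ (since $a$ is open). Thus the restriction $\O(u_M):\O M\to\Omega(\pt(\O M))$ equals $\zeta_{\O M}$, and $c_{\O M}\circ\O(u_M)=\zeta^{-1}\circ\zeta=\id_{\O M}$. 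The second triangle identity $\P\pt(c_L)\circ u_{\P\pt L}=\id_{\P\pt L}$ is verified by an analogous computation: identifying $\at(\P X)\cong X$ via $x\mapsto\{x\}$ (\cref{MT adjunction}), the homeomorphism $\vartheta_{\P\pt L}$ becomes $\varepsilon_{\pt L}:\pt L\to\pt(\Omega(\pt L))$, after which the identity reduces to the classical $(\Omega,\pt)$ triangle identity.

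For part (2), I would show both $u_M$ and $c_L$ are isomorphisms on the restricted categories. The counit $c_L=\zeta_L^{-1}$ is already an isomorphism for every $L\in\CFrm$, as noted. For $M\in\LCSobSMT$, the algebra $M$ is atomic, so $\eta_M$ is an isomorphism by \cref{MT adjunction}, and since $\P(\vartheta_M)^{-1}$ is always an isomorphism for sober $M$, the composite $u_M$ is an isomorphism. Moreover, $\P\circ\pt$ lands in $\LCSobSMT$: \cref{lem: P circ pt} places it in $\LCSobMT$, and every powerset MT-algebra is atomic, hence spatial. Combining these with part (1) gives the equivalence.

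The main obstacle I anticipate is carrying out the second triangle identity cleanly. Although it is conceptually just a combination of the triangle identities of the $(\P,\at)$ and $(\Omega,\pt)$ adjunctions mediated by $\vartheta$, keeping track of the two natural identifications $\at(\P X)\cong X$ (via $\varepsilon$) and $\at M\cong\pt(\O M)$ (via $\vartheta$), and checking that they match up coherently when both are applied to the object $\P\pt L$, will require careful bookkeeping.
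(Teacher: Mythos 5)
Your proposal is correct and follows essentially the same route as the paper: the same unit $(\P\vartheta)\circ\eta$ (read, as you do, with the inverse to fix the variance of $\P$), the same co-unit $\zeta^{-1}$, naturality via naturality of $\eta$, $\vartheta$, and $\zeta$, and part~(2) via $\eta$ becoming an isomorphism on spatial algebras. In fact you are somewhat more thorough than the paper, whose proof only constructs the unit and co-unit as natural transformations and leaves the triangle identities implicit, whereas your computation $\O(u_M)=\zeta_{\O M}$ makes the first one explicit.
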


\begin{proof}
(1) By \cref{O,lem: P circ pt}, the functors $\O:\LCSobMT\to\CFrm$ and $\P\circ\pt:\CFrm\to\LCSobMT$ are well defined. Let $M \in \LCSobMT$. Since $M$ is sober, $\vartheta(x)= \upset x \cap \O(M)$ is a homeomorphism by \cref{vartheta homeomorphism}.

\begin{claim}\label{natural transformation}
$\vartheta: \at \to \pt \circ \O$ is a natural isomorphism.
\end{claim}

\begin{proof}
Since each $\vartheta:\at(M)\to\pt(\O(M))$ is a homeomorphism, it is enough to show that for each proper MT-morphism $f:M \to N$, where $M, N \in \LCSobMT$, the diagram below commutes.
\begin{center}
      \begin{tikzcd}[row sep=4em]
      \at(N) \arrow[->]{rr}{\vartheta_{N}}
      \ar[d, "\at(f)"']& & \pt(\O(N))
      \ar[d, "\pt(\O(f))",]\\
       \at(M) \ar[rr, "\vartheta_{M}"']
        &&\pt(\O(M)) 
      \end{tikzcd}
\end{center}
For each $x \in \at(N)$ and $a\in\O(M)$, 
\begin{align*}
	a \in \pt(\O(f))(\vartheta_N)(x) &\iff a\in(f|_{\O(M)})^{-1}(\vartheta_N)(x)\iff a \in f^{-1}[\upset x \cap \O(N)] \\
	&\iff f(a) \in \upset x \cap \O(N) \iff x \le f(a) \iff f^*(x) \leq a \\
	&\iff a \in \upset{f^*(x)} \iff a \in \vartheta_{M}(f^*(x)).
\end{align*}	
	 Thus, $\pt(\O(f))\circ \vartheta_N =\vartheta_{M}\circ f^*$. 
\end{proof}

Since $\vartheta: \at \to \pt \circ \O$ is a natural isomorphism, $(\P\vartheta):\P \circ \at \to \P \circ (\pt \circ \O)$ is a natural isomorphism. By \cref{MT adjunction}, $\eta :1_{\LCSobMT} \to \P \circ \at$ is 
a natural transformation. Thus, $(\P\vartheta) \circ \eta : 1_{\LCSobMT} \to \P \circ (\pt \circ \O)$ is a natural transformation.

 On the other hand, it follows from \cite [p.~17]{PicadoPultr2012} that
 $\zeta: 1_{\CFrm} \to \Omega \circ \pt$ is a natural transformation. 
 Since $\zeta: L \to \Omega(\pt(L))$ is an isomorphism for each $L \in \CFrm$, we see that $\zeta^{-1}: \Omega \circ \pt \to 1_{\CFrm}$ is a natural isomorphism. But $\Omega \circ \pt = (\O \circ \P) \circ \pt$. Thus, $\zeta^{-1}: \O \circ (\P\circ\pt) \to 1_{\CFrm}$ is a natural isomorphism. 

(2) It suffices to show that, when restricted to $\LCSobSMT$ and $\CFrm$, the unit and co-unit 
are natural isomorphisms. As we already pointed out, 
the co-unit $\zeta^{-1}:\O\circ(\P\circ\pt) \to 1_{\CFrm}$ is a natural isomorphism. Moreover, 
$\P\vartheta$ is a natural isomorphism and, by \cref{MT adjunction}, $\eta :1_{\LCSobSMT} \to \P \circ \at$ is a natural isomorphism. Thus, the unit $(\P\vartheta)\circ\eta : 1_{\LCSobSMT} \to(\P\circ\pt)\circ\O$ is a natural isomorphism.
\end{proof}

As an immediate consequence of \cref{CFrm adjunction}, we obtain: 

\begin{theorem}
$\LCSobSMT$ is a reflective subcategory of $\LCSobMT$.
\end{theorem}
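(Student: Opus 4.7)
The plan is to produce the reflector as the composite $r := \P \circ \pt \circ \O : \LCSobMT \to \LCSobSMT$, with reflection unit given by $(\P\vartheta) \circ \eta : 1_{\LCSobMT} \to i \circ r$, where $i : \LCSobSMT \hookrightarrow \LCSobMT$ is the inclusion. The first sanity check is that $r$ genuinely lands in $\LCSobSMT$: for any space $Y$ the powerset MT-algebra $\P(Y)$ is atomic, with atoms being the singletons, so $\P(\pt(\O(M))) \in \SMT$; combined with \cref{lem: P circ pt}, which guarantees that $\P(\pt(\O(M))) \in \LCSobMT$, we get that it lies in $\LCSobSMT$. Thus $r$ is a covariant functor $\LCSobMT \to \LCSobSMT$, and the unit $(\P\vartheta)\circ\eta$ of \cref{CFrm adjunction}(1) may be viewed as a natural transformation $1_{\LCSobMT} \to i \circ r$.

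The universal property of the reflection is then essentially immediate from \cref{SMT_LS reflective subcategory}. Given $M \in \LCSobMT$, $N \in \LCSobSMT$, and an MT-morphism $f : M \to i(N)$, I would set $\iota := (\P\vartheta)\circ\eta$ and define the mate $g : r(M) \to N$ by $g := \iota_N^{-1} \circ r(f)$. This makes sense because \cref{SMT_LS reflective subcategory} tells us that the restriction of $\iota$ to $\LCSobSMT$ is a natural isomorphism, so the component $\iota_N$ is invertible. Naturality of $\iota$ applied to $f$ gives $\iota_N \circ f = i(r(f)) \circ \iota_M$, whence $i(g) \circ \iota_M = f$; and the same naturality square together with invertibility of $\iota_N$ forces uniqueness of $g$. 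Hence $r \dashv i$, which is exactly the assertion that $\LCSobSMT$ is a reflective subcategory of $\LCSobMT$.

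The argument is entirely formal and instantiates the general fact that whenever one side of an adjunction restricts to an equivalence with a full subcategory of the other side, that subcategory is reflective. No real obstacle arises: the only content beyond \cref{CFrm adjunction} is the observation that $\P \circ \pt \circ \O$ lands in $\LCSobSMT$, which follows at once from the atomicity of any powerset algebra.
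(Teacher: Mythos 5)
Your proposal is correct and matches the paper's route: the paper derives this theorem as an immediate consequence of \cref{CFrm adjunction}, precisely by composing the adjunction $\O\dashv\P\circ\pt$ with the restricted equivalence and noting that $\P\circ\pt\circ\O$ lands in $\LCSobSMT$, as you do. The only point worth tightening is the uniqueness of the mate $g$: it does not follow from the naturality square for $f$ together with invertibility of $\iota_N$ alone, but rather from the fact that $\iota_M=(\P\vartheta)\circ\eta_M$ is an epimorphism ($\eta_M$ is onto by \cref{MT adjunction} and $\P\vartheta$ is an isomorphism), so $i(g)\circ\iota_M=i(g')\circ\iota_M$ already forces $g=g'$.
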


\begin{remark}
As we pointed out in \cref{continuous}, each continuous frame is spatial. While we expect that there exist non-spatial locally compact sober MT-algebras, such examples are still lacking.
\end{remark}

We conclude this section by establishing 
a dual adjunction between $\LCSobMT$ and the category $\LCSob$ of locally compact sober spaces and proper 
maps, which restricts to a dual equivalence between $\LCSobSMT$ and $\LCSob$. 
For this we require the following:

\begin{lemma}\label{eta injective on opens}
Let $M$ be a sober MT-algebra. If $\O(M)$ is a spatial frame, then $\eta: \O(M)\to \Omega(\at(M))$ is an isomorphism. 
\end{lemma}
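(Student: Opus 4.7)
The plan is to identify $\eta|_{\O(M)}$ as a composition of two known isomorphisms, using $\vartheta$ on the space side and $\zeta$ on the frame side.

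First, I would observe that by construction of the topology on $\at(M)$ (recall $\Omega(\at(M))=\tau=\eta[\O(M)]$), the map $\eta:\O(M)\to\Omega(\at(M))$ is a surjective frame homomorphism. So the content of the lemma is injectivity.

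To get injectivity, I would exploit the two given hypotheses. Since $M$ is sober, \cref{vartheta homeomorphism} gives that $\vartheta:\at(M)\to\pt(\O(M))$, $x\mapsto{\upset}x\cap\O(M)$, is a homeomorphism, and hence $\Omega(\vartheta):\Omega(\pt(\O(M)))\to\Omega(\at(M))$ is a frame isomorphism. Since $\O(M)$ is spatial, \cref{frames equivalence} gives that $\zeta:\O(M)\to\Omega(\pt(\O(M)))$ is a frame isomorphism. Composing yields a frame isomorphism $\Omega(\vartheta)\circ\zeta:\O(M)\to\Omega(\at(M))$.

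The key computation is to verify that this composition coincides with $\eta|_{\O(M)}$. For $a\in\O(M)$,
\[
\Omega(\vartheta)(\zeta(a)) = \vartheta^{-1}[\zeta(a)] = \{x\in\at(M)\mid a\in\vartheta(x)\} = \{x\in\at(M)\mid a\in {\upset}x\cap\O(M)\} = \{x\in\at(M)\mid x\le a\} = \eta(a).
\]
Thus $\eta|_{\O(M)}=\Omega(\vartheta)\circ\zeta$ is an isomorphism. There is no real obstacle here beyond tracing through the definitions; once $\vartheta$ and $\zeta$ are recognized as the right bridges, the identification of $\eta$ with their composition is immediate from the definitions of $\vartheta$ and $\zeta$.
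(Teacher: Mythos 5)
Your proof is correct and uses essentially the same ingredients as the paper's: the spatiality of $\O(M)$ and the homeomorphism $\vartheta$ supplied by soberness (\cref{vartheta homeomorphism}). The paper verifies order-reflection of $\eta|_{\O(M)}$ directly---given $a\not\le b$ it produces a point of $\O(M)$ separating them and then an atom beneath that point via $\vartheta$---whereas your factorization $\eta|_{\O(M)}=\Omega(\vartheta)\circ\zeta$ is a cleaner functorial packaging of the same argument.
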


\begin{proof}
 Since the restriction of $\eta$ to $\O(M)$ is a well-defined onto frame homomorphism (see \cref{sec: MT-algebras}), it is enough to show that $a\not\le b$ implies $\eta(a)\not\subseteq\eta(b)$ for each $a, b \in \O(M)$. Because $\O(M)$ is spatial, $a\not\le b$ implies that
there is $y \in \pt(\O(M)$ such that $a \in y$ and $b \notin y$. Since $M$ is sober, $\vartheta$ is a homeomorphism by \cref{vartheta homeomorphism}. Therefore, there is $x \in \at(M)$ such that $ y = \upset x \cap \O(M)$. Thus, $ x \leq a$ and $ x \nleq b$, and so $\eta(a) \not\subseteq \eta(b)$.
\end{proof}

\begin{lemma} \label{eta of Scott-open}
	Let $M \in \LCSobMT$. If $F$ is a Scott-open filter of $M$, then $\upset \eta[F]$ is a Scott-open filter of $\P(\at(M))$. 
\end{lemma}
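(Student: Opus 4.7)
\medskip

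\textbf{Proof proposal.} The plan is to verify the three defining properties of a Scott-open filter in turn: $\upset\eta[F]$ is a filter, it is an open filter, and it is Scott-open. Throughout the proof I will use that $\eta:M\to\P(\at(M))$ is an onto complete boolean homomorphism, so in particular it preserves all meets and joins, and that by \cref{restriction to frame homomorphism} its restriction $\eta|_{\O(M)}:\O(M)\to\Omega(\at(M))$ is a surjective frame homomorphism.

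First I would show $\upset\eta[F]$ is a filter: it is upward closed by construction, contains $\at(M)=\eta(1)$ since $1\in F$, and is closed under binary intersection because for $\eta(a)\subseteq A$ and $\eta(b)\subseteq B$ with $a,b\in F$ we have $\eta(a\wedge b)=\eta(a)\cap\eta(b)\subseteq A\cap B$ with $a\wedge b\in F$. For openness, given $A\in\upset\eta[F]$ choose $a\in F$ with $\eta(a)\subseteq A$; since $F$ is an open filter we have $\square a\in F$, and $\eta(\square a)\subseteq\eta(a)\subseteq A$ is open in $\at(M)$ by \cref{restriction to frame homomorphism}, so $\eta(\square a)\subseteq\int(A)$ and hence $\int(A)\in\upset\eta[F]$. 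In particular, $\upset\eta[F]=\upset\eta[F\cap\O(M)]$.

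The main step is Scott-openness, and this is where local compactness is used. Since $M$ is locally compact, \cref{LC implies CF} gives that $\O(M)$ is a continuous frame, hence spatial by \cref{continuous}. Because $M$ is sober, \cref{eta injective on opens} then implies that $\eta|_{\O(M)}:\O(M)\to\Omega(\at(M))$ is a frame isomorphism. Now suppose $\bigcup\{U_i\mid i\in I\}\in\upset\eta[F]$ with each $U_i\in\Omega(\at(M))$. Choose $a\in F$ with $\eta(a)\subseteq\bigcup U_i$; replacing $a$ by $\square a$ we may assume $a\in F\cap\O(M)$. Let $v_i\in\O(M)$ be the unique open with $\eta(v_i)=U_i$. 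Since $\eta|_{\O(M)}$ is an order-isomorphism and $\eta(a)\subseteq\bigcup\eta(v_i)=\eta\left(\bigvee v_i\right)$, we obtain $a\le\bigvee v_i$, so $\bigvee v_i\in F$ by filter upward-closure. As $F$ is Scott-open, \cref{rem: Scott-open filter} furnishes a finite $I_0\subseteq I$ with $\bigvee_{i\in I_0}v_i\in F$, and then $\bigcup_{i\in I_0}U_i=\eta\left(\bigvee_{i\in I_0}v_i\right)\in\eta[F]$, as required.

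The main obstacle is the Scott-open step: without local compactness one cannot in general pull an open cover in $\at(M)$ back to a cover in $\O(M)$, and the whole argument hinges on the isomorphism $\O(M)\cong\Omega(\at(M))$, which in turn rests on continuity (hence spatiality) of $\O(M)$. The remaining pieces are bookkeeping with $\eta$ using that it is a complete boolean homomorphism whose restriction to opens is a frame map.
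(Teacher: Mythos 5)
Your proposal is correct and follows essentially the same route as the paper's proof: the paper also notes that $\upset\eta[F]$ is clearly an open filter and then handles Scott-openness by using local compactness to get continuity of $\O(M)$ (\cref{LC implies CF}), hence spatiality (\cref{continuous}), hence that $\eta|_{\O(M)}$ is an isomorphism (\cref{eta injective on opens}), which lets one pull the cover back to $\O(M)$ and apply Scott-openness of $F$. The only difference is that you spell out the routine filter/openness verifications that the paper leaves implicit.
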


\begin{proof}
	Clearly $\upset \eta[F]$ is an open filter. 
		Let $\{ a_i \} \subseteq \O(M)$ and $\bigcup_i \eta(a_i) \in \upset \eta[F]$. 
		Then there is $b \in F$ with $\eta(b) \subseteq \bigcup_i \eta(a_i)$. Since $F$ is an open filter, $\square b \in F$. We have 
	\[
	\eta(\square b) \subseteq \eta(b) \subseteq \bigcup_i \eta(a_i) = \eta\left(\bigvee_i a_i\right).
	\] 
Because $M$ is locally compact, $\O(M)$ is continuous by \cref{LC implies CF}. Therefore, 
  $\O(M)$ is spatial by \cref{continuous}. Since $M$ is sober, it follows from \cref{eta injective on opens} that the restriction of $\eta$ to $\O(M)$ is an isomorphism,
      hence $\square b \leq \bigvee_i a_i$. 
    Consequently, $\bigvee_i a_i \in F$. Since $F$ is Scott-open, 	there exist $i_1,\dots,i_n$ such that $\bigvee_{t=1}^n a_{i_t} \in F$. 
		But then $\bigcup_{t=1}^n \eta(a_{i_t}) = \eta\left(\bigvee_{t=1}^n a_{i_t}\right) \in \eta[F]$, and so
		$\upset \eta[F]$ is a Scott-open filter. 
\end{proof}

\begin{lemma}\label{eta onto on KS}
Let $M \in \MT$ and $K \in \KS(\at(M))$. Then $\bigwedge\{u \in \O(M) \mid K \subseteq \eta(u)\} \in \KS(M)$.
\end{lemma}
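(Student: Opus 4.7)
Let $s := \bigwedge\{u \in \O(M) \mid K \subseteq \eta(u)\}$. I would first observe that $s$ is saturated: by definition, it is a meet of open elements of $M$. So the only real content is to show that $s$ is compact.

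The key preliminary step is to compute $\eta(s)$. Since $\eta : M \to \P(\at(M))$ is a complete boolean homomorphism (see \cref{sec: MT-algebras}), it preserves arbitrary meets, so
\[
\eta(s) = \bigcap\{\eta(u) \mid u\in\O(M),\ K \subseteq \eta(u)\}.
\]
Now the topology on $\at(M)$ is $\tau = \eta[\O(M)]$, so the displayed intersection is exactly the intersection of all open neighborhoods (in $\at(M)$) of $K$. Because $K$ is saturated in $\at(M)$, this intersection equals $K$. Thus $\eta(s) = K$.

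With this identity in hand, compactness of $s$ follows from compactness of $K$. The plan is: suppose $T \subseteq \O(M)$ with $s \le \bigvee T$; applying $\eta$ (which preserves joins) yields
\[
K = \eta(s) \;\subseteq\; \eta\Bigl(\bigvee T\Bigr) = \bigcup_{t \in T} \eta(t),
\]
an open cover of $K$ in $\at(M)$. Compactness of $K$ produces a finite subcover $\eta(t_1),\dots,\eta(t_n)$, so $K \subseteq \eta(\bigvee_{i=1}^n t_i)$. Since $\bigvee_{i=1}^n t_i \in \O(M)$, the definition of $s$ as the meet over all such opens gives $s \le \bigvee_{i=1}^n t_i$, as required. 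Therefore $s \in \KS(M)$.

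The only subtlety to guard against is the identification $\eta(s) = K$: one must invoke that $\eta$ preserves arbitrary meets (not only finite ones) and that saturation of $K$ in $\at(M)$ is with respect to the topology $\eta[\O(M)]$, so that the relevant neighborhoods of $K$ are exactly the sets $\eta(u)$ with $u \in \O(M)$ and $K \subseteq \eta(u)$. Everything else is a direct transfer of the classical argument that saturated intersections of open neighborhoods of a compact saturated set are again compact saturated.
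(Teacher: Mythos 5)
Your proposal is correct and follows essentially the same route as the paper: both use that $\eta$ preserves arbitrary meets to get $K \subseteq \eta(s)$, then transfer a cover $s \le \bigvee T$ through $\eta$ to an open cover of $K$, extract a finite subcover by compactness of $K$, and conclude $s \le \bigvee_{i=1}^n t_i$ since that finite join lies in the defining set of $s$. Your additional observation that $\eta(s)=K$ (using saturation of $K$) is true but not needed; the containment $K \subseteq \eta(s)$ suffices, which is all the paper uses.
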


\begin{proof}
Let $U=\{u \in \O(M) \mid K \subseteq \eta(u)\}$. Clearly $\bigwedge U$ is saturated. To see that it is compact, let $\bigwedge U \leq \bigvee S$, where $S\subseteq\O(M)$. Then 
\[
K \subseteq \eta\left(\bigwedge U\right) \subseteq \eta\left(\bigvee S\right) = \bigcup \{ \eta(s) \mid s\in S \}.
\] 
Since $K$ is compact, 
$K \subseteq \bigcup \{ \eta(t) \mid T\subseteq S$ finite$\}$. Therefore, $K \subseteq \eta\left( \bigvee T\right)$, 
Thus, $\bigvee T\in U$, so   
$\bigwedge U  \leq \bigvee T$, and hence $\bigwedge U \in \K(M)$.
\end{proof}

Since $\eta:1_{\MT}\to\P\circ\at$ is a unit of the dual adjunction $(\P,\at)$ (see \cref{MT adjunction}), we also have: 

\begin{lemma}\label{eta and MT-morphism}
Let $M, N \in \MT$ and $f:M \to N$ be an MT-morphism. Then for each $a \in M$,
\[
\eta_N(f(a))=\at(f)^{-1}\eta_M(a).
\] 
\end{lemma}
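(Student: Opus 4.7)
The plan is to prove the set equality by directly unfolding the definitions on both sides and invoking the adjunction $f^* \dashv f$, since this lemma is essentially an explicit restatement of the naturality of the unit $\eta : 1_{\MT} \to \P \circ \at$ of the dual adjunction $(\P, \at)$ established in \cref{Top and MT}.

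First I recall from \cref{MT adjunction} and the discussion preceding \cref{Top and MT} that $\at(f): \at(N) \to \at(M)$ is defined by $\at(f)(y) = f^*(y)$ for $y \in \at(N)$, where $f^*$ is the left adjoint of $f$ given by $f^*(y) = \bigwedge\{b \in M \mid y \leq f(b)\}$. Expanding the preimage on the right-hand side, I would compute
\[
\at(f)^{-1}\eta_M(a) = \{y \in \at(N) \mid \at(f)(y) \in \eta_M(a)\} = \{y \in \at(N) \mid f^*(y) \leq a\}.
\]
Then I would apply the defining equivalence of the adjunction $f^* \dashv f$, namely $f^*(y) \leq a \iff y \leq f(a)$, which rewrites this as $\{y \in \at(N) \mid y \leq f(a)\}$. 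By the definition of $\eta_N$ recorded in \cref{MT adjunction}, this last set is precisely $\eta_N(f(a))$, completing the equality.

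There is no substantial obstacle here; the argument is a one-line unpacking of the naturality square that is already implicit in the dual adjunction $(\P, \at)$. The only points requiring modest care are keeping track of which set each element belongs to (atoms of $N$ on the parameter side, with $a \in M$) and using the correct direction of the adjunction $f^* \dashv f$, which is available because $f$ is a complete boolean homomorphism and so in particular preserves arbitrary meets.
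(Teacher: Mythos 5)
Your proof is correct and matches the paper's intent exactly: the paper gives no separate argument, deriving the lemma directly from the fact that $\eta$ is the unit of the dual adjunction $(\P,\at)$, and your computation is precisely the standard unpacking of that naturality via the Galois connection $f^*(y)\leq a \iff y\leq f(a)$. No gaps.
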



\begin{lemma}\label{LS to LC}
The restriction $\at : \LCSobMT \to \LCSob$ is well defined.
\end{lemma}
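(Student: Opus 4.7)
The plan is to verify (a) $\at(M)\in\LCSob$ for each $M\in\LCSobMT$, and (b) $\at(f):\at(N)\to\at(M)$ is a proper continuous map for each proper MT-morphism $f:M\to N$ in $\LCSobMT$. For (a), soberness of $\at(M)$ is immediate from \cref{vartheta homeomorphism}, which gives a homeomorphism $\vartheta_M:\at(M)\to\pt(\O(M))$, combined with the classical fact that $\pt(L)$ is sober for any frame $L$. For local compactness of $\at(M)$, I invoke \cref{LC implies CF} so that $\O(M)$ is a continuous frame, then \cref{continuous} so that $\O(M)$ is spatial; \cref{eta injective on opens} then makes $\eta_M|_{\O(M)}:\O(M)\to\Omega(\at(M))$ a frame isomorphism, so $\Omega(\at(M))$ is a continuous frame. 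Adapting the argument of \cref{CF implies LC} to the setting of spaces---using \cref{lemma to prove spatial} to obtain, for each $U\ll V$ in $\Omega(\at(M))$, a Scott-open filter $F$ with $V\in F\subseteq\upset U$, and \cref{cor: HM for spaces} to realize $\bigcap F$ as a compact saturated set sandwiched between $U$ and $V$---then yields local compactness of $\at(M)$.

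For (b), continuity of $\at(f)$ is part of the dual adjunction of \cref{Top and MT}. To establish properness, fix $K\in\KS(\at(M))$; the idea is to exhibit $\at(f)^{-1}(K)$ as the image under $\eta_N$ of an explicit compact saturated element of $N$. By \cref{eta onto on KS}, the element $s:=\bigwedge\{u\in\O(M)\mid K\subseteq\eta_M(u)\}$ lies in $\KS(M)$. Since $\eta_M$ preserves arbitrary meets (as a complete boolean homomorphism), $\eta_M(s)=\bigcap\{\eta_M(u)\mid K\subseteq\eta_M(u)\}$; combining this with the isomorphism $\eta_M|_{\O(M)}$ from part~(a) and the fact that $K$ is saturated gives $\eta_M(s)=K$. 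Then \cref{eta and MT-morphism} yields $\at(f)^{-1}(K)=\at(f)^{-1}(\eta_M(s))=\eta_N(f(s))$, and properness of $f$ gives $f(s)\in\KS(N)$.

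It remains to show that $\eta_N$ sends $\KS(N)$ into $\KS(\at(N))$, which is the main technical step. Given $t\in\KS(N)$, \cref{Hofmann-Mislove} attaches to it the Scott-open filter $\alpha(t)=\{a\in N\mid t\le\square a\}$ of $N$. By \cref{eta of Scott-open}, $\upset\eta_N[\alpha(t)]$ is a Scott-open filter of $\P(\at(N))$. Since $\at(N)$ is sober by part~(a), \cref{thm: MTSob and Sob} makes $\P(\at(N))$ a sober MT-algebra, so \cref{Hofmann-Mislove} applies to it and yields that $\bigwedge\upset\eta_N[\alpha(t)]$ lies in $\KS(\P(\at(N)))=\KS(\at(N))$. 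A short calculation, using that $\eta_N$ preserves arbitrary meets and $t$ is saturated, identifies this meet with $\eta_N(t)$, completing the argument. The subtle point is that meets of opens computed in the frame $\O(N)$ need not agree with meets taken in $N$, but routing the argument through \cref{eta of Scott-open} and \cref{Hofmann-Mislove} at the level of $\P(\at(N))$ sidesteps this distinction cleanly.
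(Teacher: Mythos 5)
Your proposal is correct and follows essentially the same route as the paper: local compactness of $\at(M)$ via a Scott-open filter obtained from $b\ll a$ and Hofmann--Mislove, and properness of $\at(f)$ via \cref{eta onto on KS}, \cref{eta of Scott-open}, \cref{HM porism}, and \cref{eta and MT-morphism}. The only differences are cosmetic: you transport the continuity of $\O(M)$ to $\Omega(\at(M))$ through the isomorphism $\eta|_{\O(M)}$ and apply \cref{cor: HM for spaces} there, where the paper instead lifts the Scott-open filter of $\O(M)$ to $\P(\at(M))$ directly, and you factor out the claim that $\eta_N$ maps $\KS(N)$ into $\KS(\at(N))$, which the paper proves inline for the element $f\left(\bigwedge U\right)$.
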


\begin{proof}
Let $M \in \LCSobMT$. Then $\at(M)$ is sober by \cref{thm: MTSob and Sob}. We show that $\at(M)$ is locally compact. Let $a\in\O(M)$ and $x \in \eta(a)$. Then $x \leq a = \bigvee \{b \mid b \nest a\}= \bigvee \{b \mid b \ll a\}$, where the last equality follows from \cref{lcandcont}. Since $x$ is an atom, there is $b\ll a$ with $x \leq b$. By \cref{lemma to prove spatial}, there is a Scott-open filter $F\subseteq\O(M)$ such that $a\in F$ and $F\subseteq{\uparrow}b$. Let $G={\uparrow}F$. Then $G$ is a Scott-open filter of $M$.
By \cref{eta of Scott-open}, $\upset\eta[G]$ is a Scott-open filter of $\P(\at(M))$. 
By \cref{HM porism}, $\bigcap \upset \eta[G]$ is a compact saturated set of $\at(M)$. 
Moreover,
$x \in \eta(b) \subseteq \bigcap \upset \eta[G] \subseteq \eta(a)$. Thus, $\at(M)$ is locally compact, and hence the restriction of $\at$ is well defined on objects.

To see that it is also well defined on morphisms, suppose $M , N \in \LCSobMT $ and $f : M \to N$ is a proper MT-morphism. 
Since, $\at(f):\at(N) \to \at(M)$ is continuous, it is left to show that $\at(f)$ is proper. Let $K$ be a compact saturated subset of $\at(M)$. 
Since $K$ is saturated, $K=\bigcap\eta[U]$, where $U= \{u \in \O(M) \mid K \subseteq \eta(u)\}$. 
By \cref{eta onto on KS}, $\bigwedge U \in \KS(M)$. Since $f$ is proper, $f(\bigwedge U) \in \KS(N)$, 
so $\alpha( f(\bigwedge U))$ is a Scott-open filter of $N$ by \cref{porism up k}. Therefore, $\upset\eta[\alpha (f(\bigwedge U))]$ is a Scott-open filter by \cref{eta of Scott-open}. Hence, $\bigcap \upset \eta[\alpha (f(\bigwedge U))]$ is compact saturated by \cref{HM porism}. But 
\begin{align*}
\bigcap \upset \eta\left[\alpha\left( f\left(\bigwedge U\right)\right)\right] &= \bigcap \eta\left[\alpha \left(f\left(\bigwedge U\right)\right)\right] \\
&= \eta\left(\bigwedge\alpha\left( f\left(\bigwedge U\right)\right)\right)  \\
&= \eta\left(f\left(\bigwedge U\right)\right)  && \text{by \cref{Hofmann-Mislove} because } f\left(\bigwedge U\right) \in \KS(N) \\
&= \at(f)^{-1}\left(\eta\left(\bigwedge U\right)\right) && \text{by \cref{{eta and MT-morphism}}}\\ & = \at(f)^{-1}\left(K\right).
\end{align*}
Thus, $\at(f)^{-1}(K)$ is compact saturated, which shows that $\at(f)$ is proper.
\end{proof}

\begin{lemma}\label{LC to LS}
The restriction $\P:\LCSob \to \LCSobMT$ is well defined.
\end{lemma}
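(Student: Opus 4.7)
The plan is to verify that $\P$ lands in $\LCSobMT$ both on objects and on morphisms, using the fact that the MT-algebra notions (open element, compact element, compact saturated element, $\nest$) on $\P(X)$ are just restatements in set-theoretic language of the corresponding topological notions on $X$.

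For objects, let $X \in \LCSob$. That $\P(X)$ is sober as an MT-algebra follows at once from \cref{thm: MTSob and Sob}. For local compactness, I would first lay down the dictionary: under the identification $\O(\P(X)) = \Omega(X)$ the MT-algebra definition of compactness of an element $a \subseteq X$ (every cover of $a$ by opens admits a finite subcover) is verbatim the usual notion of a compact subset of $X$, so $\K(\P(X))$ is exactly the family of compact subsets of $X$. Consequently, for $U, V \in \Omega(X)$ the relation $V \nest U$ in $\P(X)$ becomes: there is a compact $K \subseteq X$ with $V \subseteq K \subseteq U$, and the MT-algebraic condition $U = \bigvee\{V \in \O(\P(X)) \mid V \nest U\}$ becomes the topological condition that every open $U$ is the union of those opens $V$ which admit a compact neighbourhood contained in $U$. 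This is precisely what it means for the sober space $X$ to be locally compact, so $\P(X) \in \LCSobMT$.

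For morphisms, let $g : X \to Y$ be a proper continuous map in $\LCSob$. We already know $\P(g) = g^{-1} : \P(Y) \to \P(X)$ is an MT-morphism, so by \cref{proper} it suffices to check that $g^{-1}[K] \in \K(\P(X))$ whenever $K \in \KS(\P(Y))$. But this is exactly the topological definition of $g$ being a proper map. (Saturation of $g^{-1}[K]$ comes for free: writing $K = \bigcap_i U_i$ with each $U_i \in \Omega(Y)$, we get $g^{-1}[K] = \bigcap_i g^{-1}[U_i]$, an intersection of opens in $X$.) Thus $\P(g)$ is a proper MT-morphism, completing the verification.

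The only substantive point is the matching of the two formulations of local compactness on $\P(X)$ versus $X$; once that dictionary is unfolded, the rest is a direct definitional check resting on the constructions recalled in \cref{sec: MT-algebras} and on \cref{thm: MTSob and Sob}.
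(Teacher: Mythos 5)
Your proof is correct and follows essentially the same route as the paper's: sobriety of $\P(X)$ via \cref{thm: MTSob and Sob}, local compactness by observing that the MT-condition $U=\bigvee\{V\mid V\nest U\}$ on $\P(X)$ is the topological condition on $X$, and properness of $\P(g)=g^{-1}$ directly from the definition of a proper map. The paper simply leaves implicit the dictionary between $\K(\P(X))$ and compact subsets of $X$ that you spell out.
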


\begin{proof}
Let $X \in \LCSob$. Then $(\P(X),\int)$ is sober by \cref{thm: MTSob and Sob}. Moreover, since $X$ is locally compact, $U=\bigcup\{V \in \O(\P(X)) \mid V \nest U\}$ for every $U \in \O(\P(X))$. Therefore, $\P$ is well defined on objects. Let $X, Y \in \LCSob$ and $f : X \to Y$ be proper. Then $\P(f)=f^{-1}:\P(Y) \to \P(X)$ is an MT-morphism. 
Moreover, it is a proper MT-morphism since if $K \in \KS((\P(Y),\int))$, then 
$f^{-1}(K)\in\KS((\P(X),\int))$ because $f$ is a proper map.
Thus, 
$\P$ is also well defined on morphisms. 
\end{proof}

By putting \cref{thm: MTSob and Sob} together with \cref{LS to LC,LC to LS}, we obtain: 

\begin{theorem}[Hofmann-Lawson duality for MT-algebras]\label{LC equivalent to LS}
 The dual adjunction between $\Sob$ and $\SobMT$ restricts to a dual adjunction between $\LCSob$ and  $\LCSobMT$, and the dual equivalence between $\Sob$ and $\SobSMT$ restricts to a dual equivalence between $\LCSob$ and  $\LCSobSMT$.
\end{theorem}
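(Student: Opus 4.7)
The plan is to assemble earlier results rather than do any fresh work. The dual adjunction $(\P,\at)$ between $\Top$ and $\MT$ from \cref{Top and MT} restricts to a dual adjunction between $\Sob$ and $\SobMT$ and to a dual equivalence between $\Sob$ and $\SobSMT$ by \cref{thm: MTSob and Sob}. So what remains is to verify that these further restrict to $\LCSob$ and $\LCSobMT$ (and to $\LCSob$ and $\LCSobSMT$ for the equivalence).

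First I would invoke \cref{LS to LC,LC to LS} to conclude that the functors $\at$ and $\P$ restrict, respectively, to $\at: \LCSobMT \to \LCSob$ and $\P: \LCSob \to \LCSobMT$. This is the substantive content, already proved: that $\at(M)$ is locally compact and $\at(f)$ is proper when $M\in\LCSobMT$ and $f$ is a proper MT-morphism, and symmetrically that $\P(X)$ is a locally compact sober MT-algebra and $\P(g)$ a proper MT-morphism when $X\in\LCSob$ and $g$ is proper. Once the functors are well defined on the restricted categories, since $\LCSob$ is a full subcategory of $\Sob$ and $\LCSobMT$ is a full subcategory of $\SobMT$ (properness of morphisms being the only extra condition, and it is preserved in both directions by construction), the unit $\eta$ and counit $\varepsilon$ of the original adjunction from \cref{MT adjunction} automatically restrict to natural transformations between the restricted functors, still satisfying the triangle identities.

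For the equivalence, I would restrict further to $\LCSobSMT$ on the MT side. Since $\LCSobSMT = \LCSobMT\cap\SMT$, by \cref{MT adjunction} the unit $\eta:1_{\LCSobSMT}\to\P\circ\at$ is a natural isomorphism (as each $M\in\LCSobSMT$ is atomic). The counit $\varepsilon: 1_{\LCSob}\to\at\circ\P$ is a homeomorphism for every $X\in\Top$, in particular for every $X\in\LCSob$. Hence the restricted adjunction is a dual equivalence.

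No step is a genuine obstacle here: all the difficulty has already been absorbed into \cref{LS to LC,LC to LS}. If anything, the only mildly delicate point worth flagging is that $\LCSob$ and $\LCSobMT$ use the non-full class of \emph{proper} morphisms, so one must confirm (as in the two lemmas just cited) that $\at$ and $\P$ send proper morphisms to proper morphisms; once this is in hand, the assembly into a dual adjunction and dual equivalence is formal.
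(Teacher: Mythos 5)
There is a genuine gap. You assert that $\LCSob$ and $\LCSobMT$ are \emph{full} subcategories of $\Sob$ and $\SobMT$ and conclude that the unit and counit ``automatically restrict.'' But these subcategories are not full: their morphisms are only the \emph{proper} ones, as you yourself note at the end. Consequently, knowing that the functors $\at$ and $\P$ restrict (\cref{LS to LC,LC to LS}) is not enough; for the adjunction to restrict you must also verify that each component of the unit and counit is itself a morphism of the restricted category, i.e.\ that $\varepsilon_X : X \to \at(\P(X))$ is a proper map for $X \in \LCSob$ and that $\eta_M : M \to \P(\at(M))$ is a proper MT-morphism for $M \in \LCSobMT$. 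The first is immediate ($\varepsilon$ is a homeomorphism), but the second is not automatic, since for non-spatial $M$ the map $\eta_M$ is merely an onto MT-morphism, and nothing formal guarantees that it sends elements of $\KS(M)$ to compact saturated subsets of $\at(M)$. This verification is the entire content of the paper's proof: given $k \in \KS(M)$, one shows $\alpha(k) \in \SFilt(M)$ (\cref{porism up k}), that $\upset\eta[\alpha(k)]$ is a Scott-open filter of $\P(\at(M))$ (\cref{eta of Scott-open}), and then uses the Hofmann--Mislove theorem to conclude that $\bigcap \upset\eta[\alpha(k)] = \eta(k)$ is compact saturated.

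Your argument for the dual \emph{equivalence} part survives, since there $\eta_M$ is an MT-isomorphism (as $M$ is spatial) and isomorphisms are trivially proper; but for the dual \emph{adjunction} between $\LCSob$ and $\LCSobMT$ the properness of $\eta_M$ must be proved, and your proposal omits exactly that step.
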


\begin{proof}
Since $\varepsilon:X\to\at(\P(X))$ is a homeomorphism (see \cref{MT adjunction}), hence a proper map for all $X\in\LCSob$, it suffices to show that $\eta:M \to \P(\at(M))$ is a proper MT-morphism for all $M \in \LCSobMT$. 
Let $k \in \KS(M)$. Then $\alpha(k) \in \SFilt(M)$ by \cref{porism up k}, and $\upset{\eta[\alpha(k)]}$ is a Scott-open filter of $\P(\at(M))$ by \cref{eta of Scott-open}. By \cref{cor: HM for spaces},
$\bigcap \upset{\eta[\alpha(k)]}$ is compact. 
But
\[
	\bigcap \upset\eta[\alpha(k)] = \bigcap \eta[\alpha(k)] = \eta\left(\bigwedge \alpha(k)\right) = \eta(k),
\]
where the last equality holds because $\bigwedge \alpha(k) = k$ by 
\cref{Hofmann-Mislove}.
Thus, 
$\eta:M \to \P(\at(M))$ is a proper MT-morphism. 
\end{proof}

\begin{corollary}[Hofmann-Lawson duality] 
$\LCSob$ is dually equivalent to $\CFrm$.
\end{corollary}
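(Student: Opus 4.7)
The plan is simply to chain the two equivalences established above. \cref{LC equivalent to LS} yields a dual equivalence between $\LCSob$ and $\LCSobSMT$ via the restrictions of $\P$ and $\at$, while \cref{SMT_LS reflective subcategory} yields a (covariant) equivalence between $\LCSobSMT$ and $\CFrm$ via $\O$ and $\P\circ\pt$. Composing a dual equivalence with an equivalence is again a dual equivalence (the units and co-units of the composite are obtained by conjugating one pair through the natural isomorphisms witnessing the other), so the pair $\O\circ\P:\LCSob\to\CFrm$ and $\at\circ\P\circ\pt:\CFrm\to\LCSob$ implements the desired dual equivalence.

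To recover the classical formulation, observe that for a space $X$ the open elements of $(\P(X),\int)$ are precisely the open subsets of $X$, so $\O\circ\P=\Omega$. In the other direction, $\varepsilon:X\to\at(\P(X))$ from \cref{MT adjunction} is a natural homeomorphism, whence $\at\circ\P\cong 1_{\LCSob}$ and therefore $\at\circ\P\circ\pt\cong\pt$. Consequently the composite dual equivalence is naturally isomorphic to the classical pair $(\Omega,\pt)$, which is the standard phrasing of Hofmann-Lawson duality.

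No substantive obstacle remains once \cref{LC equivalent to LS} and \cref{SMT_LS reflective subcategory} are in hand: all the real work has been done, and the corollary is simply a matter of composing functors and transporting natural isomorphisms through the chain.
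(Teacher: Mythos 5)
Your proof is correct and follows the same route as the paper, which also obtains the corollary by composing the dual equivalence of \cref{LC equivalent to LS} with the equivalence of \cref{SMT_LS reflective subcategory}. The additional identification of the composite with the classical pair $(\Omega,\pt)$ is accurate and matches the commuting triangle the paper records immediately after the corollary.
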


\begin{proof}
Apply \cref{SMT_LS reflective subcategory,LC equivalent to LS}. 
\end{proof}

We thus arrive at the following diagram that commutes up to natural isomorphism: 

\begin{center}
      \begin{tikzcd}[row sep=2em]
      \LCSobMT \arrow[->]{rr}{\O}
      \ar[shift right=.5ex,"\at"',  shorten <= 5pt]{dr}
      & & \CFrm 
      \ar[dl, "pt", shift left=.5ex, shorten <= 5pt]
      \\
      &\LCSob 
      \ar[shift right=.5ex,"\P"', shorten >= 5pt]{ul} 
      \arrow[ur, "\Omega", shift left=.5ex,  shorten >= 5pt] &
      \end{tikzcd}
\end{center}
                                                                                                                   
\section{Stably locally compact and stably compact MT-algebras} \label{sec: 5}

Hofmann-Lawson duality restricts to a duality between the categories of stably continuous frames and stably locally compact spaces, which further restricts to a duality between the categories of stably compact frames and stably compact spaces (see \cite{GierzKeimel1977,Johnstone1980,Banaschewski1981,Simmons1982}). 
In this section we introduce stably locally compact and stably compact MT-algebras and show that \cref{CFrm adjunction,LC equivalent to LS} restrict to yield the corresponding results in the stably locally compact and stably compact settings. 

\begin{definition}\label{stably locally compact}
 Let $M$ be a locally compact sober MT-algebra. We call $M$ \emph{stably locally compact} provided $k,m \in \KS(M) \Longrightarrow k \wedge m \in \KS(M)$. 
\end{definition}

\begin{remark}
Since the meet of saturated elements is saturated, the above condition 
is equivalent to $k,m \in \KS(M) \Longrightarrow k \wedge m \in \K(M)$.
\end{remark}

Recall (see e.g., \cite[p.~488]{Compendium}) that a frame $L$ is {\em stably continuous} if $L$ is continuous and the way-below relation is stable (meaning that $a \ll b,c$ implies $ a \ll b \wedge c$ for all $a,b,c \in L$).

\begin{theorem} \label{StCFrm=StLC}
    Let $M\in\MT$. 
    \begin{enumerate} [ref=\thelemma(\arabic*)] 
    \item If $M$ is stably locally compact, then $\O(M)$ is a stably continuous frame.\label [lemma] {StCfrm implies StLC}
    \item If $M$ is sober and $\O(M)$ is a stably continuous frame, then $M$ is stably locally compact.\label [lemma] {StCF implies StLC}
    \end{enumerate}
\end{theorem}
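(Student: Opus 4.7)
The plan is to reduce both implications to the Hofmann-Mislove isomorphism $\alpha : \KS(M) \cong \SFilt(M)$ for sober MT-algebras together with the characterization $a \nest b \Leftrightarrow a \ll b$ for open elements in a locally compact MT-algebra (\cref{lcandcont}). The same bridge, translating between stability of $\ll$ on $\O(M)$ and closure of $\KS(M)$ under binary meets, handles both directions.

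For (1), suppose $M$ is stably locally compact. Then $\O(M)$ is continuous by \cref{LC implies CF}, so only stability of $\ll$ must be checked. Given open $a,b,c$ with $a \ll b$ and $a \ll c$, \cref{lcandcont} yields $a \nest b$ and $a \nest c$; by \cref{nest on opens} the interpolants can be taken in $\KS(M)$, giving $k,k' \in \KS(M)$ with $a \le k \le b$ and $a \le k' \le c$. By stable local compactness $k \wedge k' \in \KS(M)$, and $a \le k \wedge k' \le b \wedge c$ produces $a \nest b \wedge c$, hence $a \ll b \wedge c$ by \cref{way below doesn't imply triangle}.

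For (2), assume $M$ is sober and $\O(M)$ is stably continuous; then $M$ is locally compact by \cref{CF implies LC}. Given $k,m \in \KS(M)$, the meet $k \wedge m$ is automatically saturated, so I only need to show it is compact. The strategy is to exhibit a Scott-open filter $F$ of $M$ with $\bigwedge F = k \wedge m$ and invoke \cref{HM porism}. The filters $F_k := \alpha(k) \cap \O(M) = \upset k \cap \O(M)$ and $F_m := \upset m \cap \O(M)$ are Scott-open in $\O(M)$ (via \cref{porism up k} and the $M$-to-$\O(M)$ correspondence), and because $k,m$ are saturated, $\bigwedge F_k = k$ and $\bigwedge F_m = m$. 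Let $F$ be the filter of $\O(M)$ generated by $F_k \cup F_m$, i.e., the upward closure of $\{u \wedge v \mid u \in F_k,\ v \in F_m\}$; then $\bigwedge F = k \wedge m$, and $\upset F$ is an open filter of $M$.

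The main technical step, and the anticipated obstacle, is verifying that $F$ is Scott-open, which is where stable continuity must enter. Suppose $\bigvee S \in F$ with $S \subseteq \O(M)$, so $u \wedge v \le \bigvee S$ for some $u \in F_k, v \in F_m$. I will use the standard continuous-frame fact that every Scott-open filter refines each of its members by a way-below predecessor: since $\{c \mid c \ll u\}$ is directed with join $u$ (by interpolation in a continuous frame) and $F_k$ is Scott-open, there is $u' \in F_k$ with $u' \ll u$, and similarly $v' \in F_m$ with $v' \ll v$. Stability of $\ll$ gives $u' \wedge v' \ll u \wedge v \le \bigvee S$, so $u' \wedge v' \le \bigvee T$ for some finite $T \subseteq S$, placing $\bigvee T \in F$. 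Hence $\upset F$ is a Scott-open filter of $M$ with meet $k \wedge m$, and \cref{HM porism} gives $k \wedge m \in \KS(M)$, completing the proof.
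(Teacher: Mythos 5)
Your proposal is correct and follows essentially the same route as the paper: part (1) is verbatim the paper's argument, and in part (2) the paper likewise forms the filter generated by $\alpha(k)$ and $\alpha(m)$, uses continuity of $\O(M)$ to interpolate way-below predecessors above $k$ and $m$, applies stability of $\ll$ to get Scott-openness, and concludes via \cref{HM porism} that $\bigwedge F = k\wedge m$ is compact saturated. The only cosmetic difference is that you work with the trace filters in $\O(M)$ and lift by $\upset$, whereas the paper works with open filters of $M$ directly; the two are identified by the standard correspondence recalled in Section~3.
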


\begin{proof}
(1) Since $M$ is locally compact, $\O(M)$ is a continuous frame by \cref{LC implies CF}. Let
 $a, b,c \in \O(M)$ with $a \ll b,c$. 
  By \cref{lcandcont}, $a \nest b,c$. Therefore, by \cref{nest on opens}, there exist $k,m \in \KS(M)$ such that $a \leq k \leq b$ and $a \leq m \leq c$. Because $M$ is stably locally compact, $k \wedge m \in \KS(M)$. Thus, $a \nest b \wedge c$, so $a \ll b \wedge c$, and hence $\O(M)$ is a stably locally compact frame.
 
 (2) Since $M$ is sober and $\O(M)$ is continuous, $M$ is locally compact by \cref{CF implies LC}. We show that $k,m \in \KS(M) \Longrightarrow k\wedge m\in\KS(M)$. By \cref{porism up k}, $k,m \in \KS(M)$ imply that $\alpha(k),\alpha(m) \in \SFilt(M)$. Let $F$ be the filter generated by $\alpha(k)$ and $\alpha(m)$. We show that $F \in \SFilt(M)$. Because $\alpha(k),\alpha(m)$ are open filters, so is $F$.
  Let $\bigvee S \in F$, where $S \subseteq \O(M)$. Then there exist $a,b\in M$ such that $k \leq \square a$, $m \leq \square b$, and $a\wedge b\le\bigvee S$. Since $\O(M)$ is continuous and $k\in \K(M)$, from $k \leq \square a$ it follows that $k \leq u \ll \square a$ for some $u \in\O(M)$. Similarly, $m \leq v \ll \square b$ for some $v \in \O(M)$.
 Because $\O(M)$ is stably continuous, 
  $u \wedge v \ll \square a \wedge \square b \leq \bigvee S$. Therefore, 
  there is a finite $T \subseteq S$ such that  $u \wedge v \leq \bigvee T$, so $\bigvee T \in F$ 
 since $u \wedge v \in F$. 
 Thus, $F\in\SFilt(M)$. 
  By \cref{HM porism}, $\bigwedge F \in \KS(M)$. 
  We show that $\bigwedge F = k \wedge m$. Clearly $k \wedge m \leq \bigwedge F$. For the reverse inequality, $\bigwedge F \leq \left(\bigwedge \alpha(k)\right) \wedge \left(\bigwedge \alpha(m)\right)$. Since 
  $k,m$ are saturated, $\bigwedge \alpha(k)=k$ and $\bigwedge \alpha (m)=m$. Thus, $\bigwedge F = k \wedge m$, so $ k \wedge m \in \KS(M)$ by \cref{HM porism}, and hence $M$ is stably locally compact.
 \end{proof} 

Let $\StLCMT$ be the full subcategory of $\LCSobMT$ consisting of stably locally compact MT-algebras.
Let also $\StCFrm$ be the full subcategory of $\CFrm$ consisting of stably continuous frames. 

\begin{lemma}\label{StMT to STFrm}
       The restriction $\O : \StLCMT\to \StCFrm$ is well defined.
   \end{lemma}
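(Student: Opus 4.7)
The plan is to reduce the statement to two already-established results: \cref{O}, which shows that $\O : \LCSobMT \to \CFrm$ is well defined, and \cref{StCfrm implies StLC}, which supplies the object-level upgrade from ``continuous frame'' to ``stably continuous frame.'' Since $\StLCMT$ and $\StCFrm$ are full subcategories of $\LCSobMT$ and $\CFrm$ respectively, it suffices to check that $\O$ lands in $\StCFrm$ on objects and to appeal to \cref{O} on morphisms.

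For well-definedness on objects, let $M \in \StLCMT$. Then $M$ is in particular a locally compact sober MT-algebra, so \cref{StCfrm implies StLC} yields directly that $\O(M)$ is a stably continuous frame; hence $\O(M) \in \StCFrm$. For well-definedness on morphisms, let $f : M \to N$ be a proper MT-morphism with $M, N \in \StLCMT$. Since the morphisms of $\StLCMT$ are exactly the proper MT-morphisms (those of $\LCSobMT$) and the morphisms of $\StCFrm$ are exactly the proper frame homomorphisms (those of $\CFrm$), \cref{O} already guarantees that the restriction $f|_{\O(M)} : \O(M) \to \O(N)$ is a proper frame homomorphism, and this is precisely what is needed.

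I do not expect any significant obstacle here; the argument is essentially bookkeeping, combining the object-level content of \cref{StCfrm implies StLC} with the morphism-level content of \cref{O}. The only thing to flag explicitly is that the notion of ``proper'' morphism is inherited unchanged between the ambient categories and their stably-locally-compact full subcategories, so no additional verification at the morphism level is required beyond what \cref{O} already provides.
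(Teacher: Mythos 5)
Your proposal is correct and matches the paper's own proof, which simply cites \cref{O} for the morphism level and \cref{StCfrm implies StLC} for the object level; you have merely spelled out the bookkeeping that the paper leaves implicit.
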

   
   \begin{proof}
   Apply \cref{O,StCfrm implies StLC}.
      \end{proof}
   
       \begin{lemma}\label{STFrm to StMT}
       The restriction of the composition $\P\circ \pt: \StCFrm \to \StLCMT$ is well defined.
       \end{lemma}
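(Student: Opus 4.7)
The plan is to reduce the claim to facts that have already been established. For the object part, let $L \in \StCFrm$. By \cref{lem: P circ pt} we already know $\P(\pt(L)) \in \LCSobMT$, so it only remains to verify stable local compactness. Since $\StCFrm \subseteq \CFrm$, the frame $L$ is continuous, hence spatial by \cref{continuous}, so $\zeta : L \to \Omega(\pt(L)) = \O(\P(\pt(L)))$ is a frame isomorphism. Because $L$ is stably continuous and stable continuity is preserved under isomorphism, $\O(\P(\pt(L)))$ is a stably continuous frame. Since $\P(\pt(L))$ is sober, \cref{StCF implies StLC} applies and delivers that $\P(\pt(L))$ is stably locally compact, as required.

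For the morphism part, note that $\StLCMT$ is defined as a full subcategory of $\LCSobMT$ and $\StCFrm$ as a full subcategory of $\CFrm$, so there is no new morphism condition to check: whenever $h : L_1 \to L_2$ is a proper frame homomorphism between stably continuous frames, \cref{lem: P circ pt} already guarantees that $\P(\pt(h)) = (h^{-1})^{-1}$ is a proper MT-morphism between the corresponding locally compact sober MT-algebras, and both its domain and codomain are now known to lie in $\StLCMT$ by the previous paragraph.

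I don't expect any genuine obstacle: the substantive content has been packaged into \cref{StCF implies StLC} and \cref{lem: P circ pt}, and the present lemma is really just the observation that the composite restricts properly because the full-subcategory structure on both sides is matched up by the isomorphism $L \cong \O(\P(\pt(L)))$. The only point that could require a line of care is pointing out explicitly that spatiality of $L$ (forced by continuity) is what makes this frame isomorphism available, so that stable continuity transfers from $L$ to $\O(\P(\pt(L)))$ before \cref{StCF implies StLC} is invoked.
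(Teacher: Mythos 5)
Your proposal is correct and matches the paper's argument, which simply cites \cref{lem: P circ pt} together with \cref{StCF implies StLC}; you have just spelled out the intermediate step (spatiality of $L$ giving $L \cong \O(\P(\pt(L)))$, so stable continuity transfers) that the paper leaves implicit. The observation that fullness of the subcategories disposes of the morphism part is likewise exactly what the paper relies on.
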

       
       \begin{proof}
       Apply \cref{lem: P circ pt,StCF implies StLC}.
       \end{proof}

\begin{theorem}\label{St adjunction}
The adjunction between $\LCSobMT$ and $\CFrm$ restricts to an adjunction between $\StLCSMT$ and $\StCFrm$.
The equivalence between $\LCSobSMT$ and $\CFrm$ restricts to an equivalence between $\StLCSMT$ and $\StCFrm$.
\end{theorem}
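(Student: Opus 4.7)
The plan is to observe that the two preceding lemmas, \cref{StMT to STFrm} and \cref{STFrm to StMT}, already do the essential work: they ensure that both $\O$ and the composition $\P\circ\pt$ send stably locally compact MT-algebras to stably continuous frames and vice versa. What remains is a routine general-nonsense argument about restricting adjunctions and equivalences to full subcategories, so no new technical ingredient is required.

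First I would handle the adjunction. Since $\StLCMT$ is a full subcategory of $\LCSobMT$ and $\StCFrm$ is a full subcategory of $\CFrm$, for $M\in\StLCMT$ and $L\in\StCFrm$ the hom-sets $\Hom_{\StLCMT}(M,\P\pt(L))$ and $\Hom_{\StCFrm}(\O(M),L)$ coincide with their counterparts in the ambient categories. Consequently the adjunction bijection of \cref{CFrm adjunction} restricts verbatim. Equivalently, the unit component $(\P\vartheta_M)\circ\eta_M\colon M\to\P\pt\O(M)$ has codomain in $\StLCMT$ by \cref{STFrm to StMT}, the co-unit component $\zeta^{-1}_L\colon\O\P\pt(L)\to L$ has codomain in $\StCFrm$ by \cref{StMT to STFrm}, and the triangle identities are inherited from \cref{CFrm adjunction}.

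For the equivalence, I would invoke \cref{SMT_LS reflective subcategory}: it tells us that on $\LCSobSMT$ and $\CFrm$ the unit $(\P\vartheta)\circ\eta$ and co-unit $\zeta^{-1}$ are already natural isomorphisms. Since $\StLCSMT$ is a full subcategory of $\LCSobSMT$ (being just $\StLCMT\cap\SMT$) and $\StCFrm$ is a full subcategory of $\CFrm$, the relevant components remain isomorphisms on the smaller subcategories, which yields the required dual equivalence between $\StLCSMT$ and $\StCFrm$.

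The main ``obstacle'', such as it is, has already been surmounted in \cref{StCFrm=StLC}, which is what makes \cref{StMT to STFrm,STFrm to stMT} go through; specifically, the nontrivial direction uses sobriety together with the Keimel-Paseka Lemma and \cref{HM porism} to promote stable continuity of $\O(M)$ back to stable local compactness of $M$ via filter-closure arguments. Once this is in hand, the present theorem is a purely formal corollary of \cref{CFrm adjunction,SMT_LS reflective subcategory}.
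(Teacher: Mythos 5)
Your proposal is correct and follows essentially the same route as the paper, whose entire proof is ``Apply \cref{CFrm adjunction,StMT to STFrm,STFrm to StMT}''; you simply make explicit the routine full-subcategory restriction argument that the paper leaves implicit. (One minor slip: for the co-unit $\zeta^{-1}_L\colon \O\P\pt(L)\to L$ the point is that its \emph{domain} lies in $\StCFrm$, which needs \cref{STFrm to StMT} followed by \cref{StMT to STFrm}, not the codomain, which is $L$ itself.)
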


\begin{proof}
Apply \cref{CFrm adjunction,StMT to STFrm,STFrm to StMT}.
\end{proof}

\begin{lemma}\label{StLS to StLC}
The restriction $\at : \StLCMT \to \StLCSob$ is well defined.
\end{lemma}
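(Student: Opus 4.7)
The plan is as follows. Since $\StLCMT$ is a full subcategory of $\LCSobMT$ and $\StLCSob$ is a full subcategory of $\LCSob$, \cref{LS to LC} already ensures that $\at(f):\at(N)\to\at(M)$ is a proper continuous map whenever $f:M\to N$ is a proper MT-morphism between stably locally compact MT-algebras. What I must verify is that for each $M\in\StLCMT$ the space $\at(M)$ is stably locally compact---that is, the intersection of any two compact saturated subsets of $\at(M)$ is again compact saturated.

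My first step is to pull compact saturated subsets of $\at(M)$ back up to $M$. Given $K_1,K_2\in\KS(\at(M))$, I set
\[
s_i:=\bigwedge\{u\in\O(M)\mid K_i\subseteq\eta(u)\}
\]
and appeal to \cref{eta onto on KS} to conclude $s_i\in\KS(M)$ for $i=1,2$. Stability of $M$ then yields $s_1\wedge s_2\in\KS(M)$.

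Next I will transport $s_1\wedge s_2$ back down to $\at(M)$. By Hofmann--Lawson duality for MT-algebras (\cref{LC equivalent to LS}), the unit $\eta:M\to\P(\at(M))$ is a proper MT-morphism, so $\eta(s_1\wedge s_2)\in\KS(\at(M))$. Since $\eta$ is a complete boolean homomorphism, $\eta(s_1\wedge s_2)=\eta(s_1)\cap\eta(s_2)$, so it suffices to show $\eta(s_i)=K_i$.

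The main (and only substantive) point is this last identity. I will argue that each $K_i$, being saturated in $\at(M)$, equals the intersection of all opens of $\at(M)$ containing it; since the topology of $\at(M)$ is exactly $\eta[\O(M)]$, this intersection is $\bigcap\{\eta(u)\mid u\in\O(M),\ K_i\subseteq\eta(u)\}$, and because $\eta$ is a complete boolean homomorphism it sends the meet in $M$ defining $s_i$ to precisely this intersection. I do not anticipate serious obstacles; all the machinery needed---the Hofmann--Mislove correspondence, properness of $\eta$, and the description of saturated sets in $\at(M)$---has been established earlier in the paper.
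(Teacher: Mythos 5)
Your proposal is correct and follows essentially the same route as the paper: reduce to objects via \cref{LS to LC}, lift the two compact saturated subsets of $\at(M)$ to elements of $\KS(M)$ via \cref{eta onto on KS}, apply stability in $M$, and push the meet back down through $\eta$ (which, being a complete boolean homomorphism, turns the meet defining $s_i$ into the saturation $K_i$ and the meet $s_1\wedge s_2$ into $K_1\cap K_2$). The only difference is that where you cite the properness of the unit $\eta$ from \cref{LC equivalent to LS} to see that $\eta(s_1\wedge s_2)$ is compact saturated, the paper re-runs the underlying Scott-open filter argument (via $\alpha$, \cref{eta of Scott-open}, \cref{HM porism}, and \cref{Hofmann-Mislove}); your shortcut is legitimate and slightly cleaner.
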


\begin{proof}
In view of \cref{LS to LC}, 
we only need to show that $M\in \StLCMT$ implies $\at(M)$ is stably locally compact. 
For this it is sufficient to show that the intersection of two compact saturated sets of $\at(M)$ is compact. Let $K, H$ be compact saturated sets of $\at(M)$. Then $K=\eta[\bigwedge S]$ and $H=\eta[\bigwedge T]$, where $S,T\subseteq\O(M)$. Set $k=\bigwedge S$ and $m=\bigwedge T$. Then $k,m \in \KS(M)$ by \cref{eta onto on KS} and $K \cap H = \eta(k \wedge m)$. Since $M$ is stably locally compact, $k \wedge m \in \KS(M)$. By \cref{porism up k}, $\alpha(k\wedge m) \in \SFilt(M)$. By \cref{eta of Scott-open}, $\upset \eta[\alpha(k\wedge m)]$ 
 is a Scott-open filter of $\P(\at(M))$. Thus, $\bigcap \upset \eta[\alpha(k\wedge m)]$ is compact by \cref{HM porism}. But $\bigcap \upset \eta[\alpha(k\wedge m)]=\eta(k\wedge m)=K\cap H$ by \cref{Hofmann-Mislove}.
\end{proof}

\begin{lemma}\label{StLC to StLS}
The restriction $\P:\StLCSob \to \StLCMT$ is well defined.
\end{lemma}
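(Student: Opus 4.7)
The plan is to reduce this to \cref{LC to LS} and the definition of a stably locally compact space. The object $\P(X)=(\P(X),\int)$ for $X\in\StLCSob$ is already known to lie in $\LCSobMT$ by \cref{LC to LS}, so the only additional content is the stability condition on $\KS(\P(X))$. The key observation is that the compact saturated elements of the MT-algebra $\P(X)$ are precisely the compact saturated subsets of the space $X$: indeed, an element $K\in\P(X)$ is saturated in $(\P(X),\int)$ iff $K$ is a meet of open elements of $\P(X)$, i.e.\ an intersection of open subsets of $X$, which is the usual notion of a saturated subset; and $K$ is compact in $(\P(X),\int)$ iff every open cover of $K$ in $X$ admits a finite subcover, which is the usual notion of compactness. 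Hence $\KS(\P(X))$ coincides with the collection of compact saturated subsets of $X$.

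With this identification, the stability requirement for $\P(X)$ becomes: $K,H\subseteq X$ compact saturated implies $K\cap H$ compact saturated. But this is exactly the definition of a stably locally compact space (see, e.g., \cite[Def.~VI-6.7]{Compendium}). Therefore $\P(X)\in\StLCMT$, so $\P$ is well defined on objects.

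For morphisms, let $f:X\to Y$ be a proper continuous map between stably locally compact sober spaces. By \cref{LC to LS}, $\P(f)=f^{-1}:\P(Y)\to\P(X)$ is already a proper MT-morphism in $\LCSobMT$. Since $\StLCMT$ is a full subcategory of $\LCSobMT$, no further verification is needed, and $\P(f)$ is a morphism of $\StLCMT$.

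There is no real obstacle here; the lemma amounts to matching definitions once \cref{LC to LS} is in hand. The only subtlety worth stating explicitly is the identification $\KS(\P(X))=\KS(X)$, which ensures that stable local compactness of $X$ (a condition on subsets of $X$) translates verbatim into the stability condition on $\KS(\P(X))$ required by \cref{stably locally compact}.
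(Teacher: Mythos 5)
Your proposal is correct and follows the same route as the paper, which simply says the result is immediate from \cref{LC to LS} and the definition of stably locally compact spaces. Your explicit identification of $\KS(\P(X))$ with the compact saturated subsets of $X$ just spells out the step the paper leaves implicit.
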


\begin{proof}
This is immediate from the definition of stably locally compact spaces and \cref{LC to LS}.
\end{proof} 

\begin{theorem}\label{StLC equivalent to StLS}
The dual adjunction between $\LCSob$ and $\LCSobMT$ restricts to a dual adjunction between $\StLCSob$ and  $\StLCMT$, and the dual equivalence between $\LCSob$ and $\LCSobSMT$ restricts to a dual equivalence between $\StLCSob$ and  $\StLCSMT$.
\end{theorem}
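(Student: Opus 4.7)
The plan is to assemble the theorem from pieces already at hand, with very little additional work required. First I would invoke \cref{StLS to StLC,StLC to StLS}, which ensure that the functors $\at$ and $\P$ of the dual adjunction in \cref{LC equivalent to LS} restrict to functors $\at:\StLCMT\to\StLCSob$ and $\P:\StLCSob\to\StLCMT$. Since $\StLCMT$ is full in $\LCSobMT$ and $\StLCSob$ is full in $\LCSob$, the restricted assignments automatically inherit functoriality from the ambient dual adjunction without further checking.

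Next I would verify that the unit and counit transfer to the restricted categories. The counit $\varepsilon_X:X\to\at(\P(X))$ is a homeomorphism for every $X\in\Top$ by \cref{MT adjunction}, hence in particular a proper map whenever $X\in\StLCSob$, so it is a morphism of $\StLCSob$. The unit $\eta_M:M\to\P(\at(M))$ was shown in the proof of \cref{LC equivalent to LS} to be a proper MT-morphism for every $M\in\LCSobMT$, and fullness of $\StLCMT\subseteq\LCSobMT$ then makes $\eta_M$ a morphism of $\StLCMT$ when $M\in\StLCMT$. Naturality of $\eta$ and $\varepsilon$ on the restricted categories is inherited from naturality on $\LCSobMT$ and $\LCSob$, which yields the asserted dual adjunction between $\StLCSob$ and $\StLCMT$.

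For the dual equivalence part, I would observe that $\StLCSMT=\StLCMT\cap\SMT$. By \cref{MT adjunction}, $\eta_M$ is an isomorphism iff $M$ is atomic, which happens precisely when $M$ is spatial. Thus on $\StLCSMT$ the unit becomes a natural isomorphism, while $\varepsilon$ is always a natural isomorphism. This upgrades the restricted dual adjunction to a dual equivalence between $\StLCSob$ and $\StLCSMT$.

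The only conceptual obstacle, namely showing that stable local compactness on the atom side corresponds to the MT-algebra condition $k\wedge m\in\KS(M)$ in both directions, has already been handled in \cref{StLS to StLC,StLC to StLS}. Given those restriction results, the present theorem is a direct packaging together with \cref{LC equivalent to LS}, and I expect the proof in the paper to consist essentially of this citation.
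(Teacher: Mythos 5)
Your proposal is correct and matches the paper's approach exactly: the paper's proof is the one-line citation ``Apply \cref{LC equivalent to LS,StLS to StLC,StLC to StLS}'', and your write-up simply unpacks the routine details (restriction of functors, units, and counits to full subcategories) that the paper leaves implicit.
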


\begin{proof}
Apply \cref{LC equivalent to LS,StLS to StLC,StLC to StLS}.
\end{proof}

As an immediate consequence, 
we arrive at the first result mentioned at the beginning of this section:

\begin{corollary}
$\StCFrm$ is dually equivalent to $\StLCSob$.
\end{corollary}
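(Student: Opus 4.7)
The plan is to deduce this corollary by composing the two equivalences already established in this section, mirroring how the non-stable version (Hofmann-Lawson duality) was derived earlier from \cref{SMT_LS reflective subcategory,LC equivalent to LS}.

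First I would recall that \cref{St adjunction} gives a (covariant) equivalence $\O : \StLCSMT \to \StCFrm$ with quasi-inverse $\P \circ \pt$. Second, \cref{StLC equivalent to StLS} gives a dual equivalence $\at : \StLCSMT \to \StLCSob$ with dual quasi-inverse $\P : \StLCSob \to \StLCSMT$. Composing these yields the desired dual equivalence between $\StCFrm$ and $\StLCSob$: explicitly, the contravariant functor $\Omega = \O \circ \P : \StLCSob \to \StCFrm$ together with $\pt = \at \circ (\P \circ \pt) : \StCFrm \to \StLCSob$ are mutually quasi-inverse, since the composed unit and counit are natural isomorphisms obtained by pasting those of the two component equivalences.

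Since both ingredient results are already in hand, there is essentially no obstacle; the only thing to verify is that the composition of a covariant equivalence with a contravariant equivalence is again a contravariant equivalence, which is formal. Thus the proof reduces to a one-line citation, exactly in the style of the earlier Hofmann-Lawson corollary:

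\begin{proof}
Apply \cref{St adjunction,StLC equivalent to StLS}.
\end{proof}
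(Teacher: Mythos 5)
Your proof is correct and matches the paper's argument exactly: the paper also obtains the corollary by composing the equivalence $\StLCSMT \simeq \StCFrm$ from \cref{St adjunction} with the dual equivalence $\StLCSMT \simeq \StLCSob$ from \cref{StLC equivalent to StLS}, citing precisely those two results. Nothing further is needed.
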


\begin{proof}
Apply \cref{St adjunction,StLC equivalent to StLS}.
\end{proof}

We thus arrive at the following diagram that commutes up to natural isomorphism:

\begin{center}
      \begin{tikzcd}[row sep=2em]
      \StLCMT \arrow[->]{rr}{\O}
      \ar[shift right=.5ex,"\at"',  shorten <= 7pt]{dr}
      & & \StCFrm 
      \ar[dl, "pt", shift left=.5ex, shorten <= 7pt]
      \\
      &\StLCSob 
      \ar[shift right=.5ex,"\P"', shorten >= 7pt]{ul} 
      \arrow[ur, "\Omega", shift left=.5ex,  shorten >= 7pt] &
      \end{tikzcd}
\end{center}
We conclude this section by specializing the above results to the stably compact setting.

\begin{definition}
Let $M$ be a stably locally compact MT-algebra. We call $M$ {\em stably compact} if $M$ is compact.
\end{definition}
 
 Let $\StKMT$ be the full subcategory of $\StLCMT$ consisting of stably compact MT-algebras and $\StKSMT = \StKMT \cap \SMT$. Let also $\StKFrm$ be the full subcategory of $\StCFrm$ consisting of stably continuous frames. It follows from the definition of compactness that
  an MT-algebra $M$ is compact iff $\O(M)$ is a compact frame. We thus arrive at the following: 
  
 \begin{theorem}\label{StKMT and StKFRm}
 The adjunction between $\StLCMT$ and $\StCFrm$ restricts to an adjunction between $\StKMT$ and $\StKFrm$, and
the equivalence between $\StLCSMT$ and $\StCFrm$ restricts to an equivalence between $\StKSMT$ and $\StKFrm$.
 \end{theorem}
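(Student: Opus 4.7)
The plan is to lift the adjunction and equivalence of \cref{St adjunction} to the stably compact setting by checking that both functors $\O$ and $\P\circ\pt$ send compact objects to compact objects. The unit and counit will then restrict automatically because $\StKMT$, $\StKSMT$, and $\StKFrm$ are full subcategories of $\StLCMT$, $\StLCSMT$, and $\StCFrm$, respectively. The only substantive input is the observation stated just before the theorem: an MT-algebra $M$ is compact if and only if $\O(M)$ is a compact frame.

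For the first direction, I would take $M\in\StKMT$. Since $M\in\StLCMT$, \cref{StMT to STFrm} yields $\O(M)\in\StCFrm$. Because $M$ is compact, the observation above gives that $\O(M)$ is a compact frame, so $\O(M)\in\StKFrm$. Conversely, let $L\in\StKFrm$. Then $L\in\StCFrm$, so \cref{STFrm to StMT} gives $\P(\pt(L))\in\StLCMT$. Since $L$ is continuous, it is spatial by \cref{continuous}, hence $\zeta\colon L\to\Omega(\pt(L))=\O(\P(\pt(L)))$ is a frame isomorphism. Compactness of $L$ therefore transfers to $\O(\P(\pt(L)))$, and one more application of the observation shows that $\P(\pt(L))$ is compact, so $\P(\pt(L))\in\StKMT$.

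Once both restricted functors are well defined, the unit $(\P\vartheta)\circ\eta$ and counit $\zeta^{-1}$ of \cref{CFrm adjunction} automatically restrict to natural transformations between the corresponding restricted functors, which gives the desired adjunction between $\StKMT$ and $\StKFrm$. For the equivalence, \cref{SMT_LS reflective subcategory} (combined with the spatiality argument above) already tells us that the unit is a natural isomorphism on $\SMT$ and that the counit $\zeta^{-1}$ is a natural isomorphism on all of $\CFrm$; restricting to $\StKSMT$ and $\StKFrm$ therefore upgrades the restricted adjunction to an equivalence. No genuine obstacle arises in this argument, since everything reduces to the compactness-preservation observation together with spatiality of continuous frames.
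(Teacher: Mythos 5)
Your proposal is correct and matches the paper's approach: the paper justifies this theorem solely by the observation that an MT-algebra $M$ is compact iff $\O(M)$ is a compact frame, leaving the routine verification (that both functors restrict and that the unit and counit restrict by fullness) implicit. You have simply spelled out those details, and they check out.
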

 
 \begin{lemma}\label{compactness of atoms}
 Let $M \in \LCSobMT$. Then $M$ is a compact MT-algebra iff $\at(M)$ is a compact space.
 \end{lemma}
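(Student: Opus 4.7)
The plan is to reduce both directions to the fact that, for $M \in \LCSobMT$, the restriction $\eta|_{\O(M)} : \O(M) \to \Omega(\at(M))$ is a frame isomorphism. Once this identification is in place, compactness of $M$ (a statement about covers of $1$ by opens of $M$) translates directly into compactness of $\at(M)$ (a statement about open covers of the atom space).

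To set up the identification, first observe that since $M$ is locally compact, \cref{LC implies CF} tells us that $\O(M)$ is a continuous frame, and then \cref{continuous} gives that $\O(M)$ is spatial. Combined with sobriety of $M$, \cref{eta injective on opens} yields that $\eta|_{\O(M)} : \O(M) \to \Omega(\at(M))$ is an isomorphism of frames. In particular, $\eta(1) = \at(M)$ and $\eta\left(\bigvee S\right) = \bigcup\{\eta(u) \mid u \in S\}$ for every $S \subseteq \O(M)$, and $\bigvee S = 1$ in $\O(M)$ iff $\bigcup\{\eta(u) \mid u \in S\} = \at(M)$.

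For the forward direction, assume $M$ is compact and let $\{V_i\}_{i \in I}$ be an open cover of $\at(M)$. Write $V_i = \eta(u_i)$ for some $u_i \in \O(M)$. Then $\eta\left(\bigvee_i u_i\right) = \bigcup_i V_i = \at(M) = \eta(1)$, and the isomorphism gives $\bigvee_i u_i = 1$. Compactness of $M$ produces a finite $T \subseteq I$ with $\bigvee_{i \in T} u_i = 1$, whence $\bigcup_{i \in T} V_i = \eta(1) = \at(M)$. For the reverse direction, assume $\at(M)$ is compact and let $S \subseteq \O(M)$ satisfy $\bigvee S = 1$. Applying $\eta$ gives $\bigcup\{\eta(u) \mid u \in S\} = \at(M)$, and compactness of $\at(M)$ yields a finite $T \subseteq S$ with $\bigcup\{\eta(u) \mid u \in T\} = \at(M)$. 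Using the isomorphism in the other direction, $\bigvee T = 1$, so $1$ is compact in $M$.

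There is no real obstacle here; the content of the lemma is already packaged into the earlier results, and the proof amounts to transporting covers across the isomorphism $\eta|_{\O(M)}$. The only mildly delicate point is to record explicitly that local compactness is used (via \cref{LC implies CF,continuous}) to ensure $\O(M)$ is spatial, without which \cref{eta injective on opens} would not apply.
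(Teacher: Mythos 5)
Your proof is correct and follows essentially the same route as the paper: both establish that $\eta|_{\O(M)}$ is a frame isomorphism via \cref{LC implies CF}, \cref{continuous}, and \cref{eta injective on opens}, and then transport covers of $1$ and of $\at(M)$ back and forth across this isomorphism. No gaps.
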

 
 \begin{proof}
  First suppose that $M$ is compact. Let $\at(M) = \bigcup \eta[S]$, where $S\subseteq\O(M)$. Then, $\eta(1) = \eta(\bigvee S)$. Since $M$ is locally compact, $\O(M)$ is a continuous frame by \cref{LC implies CF}. Therefore, $\O(M)$ is spatial by \cref{continuous}. Moreover, because $M$ is sober, \cref{eta injective on opens} implies that the restriction of $\eta$ to $\O(M)$ is an isomorphism, and hence $1= \bigvee S$. 
  Since $M$ is compact, $1=\bigvee T$ for some finite $T\subseteq S$. 
  Thus, $\at(M)=\eta(1)=\bigcup \eta[T]$, and so $\at(M)$ is compact. 
 
For the converse, suppose that $\at(M)$ is a compact space. Let $1= \bigvee S$, where $S \subseteq \O(M)$. Then $ \at(M)=\eta(1)=\eta(\bigvee S)=\bigcup \eta[S]$. Since $\at(M)$ is compact, $\at(M)= \bigcup \eta[T]=\eta(\bigvee T)$ for some finite $T \subseteq S$. Using \cref{eta injective on opens} again, we get $1 = \bigvee T$. Thus, $M$ is a compact MT-algebra.
 \end{proof}
 
By putting \cref{StLC equivalent to StLS,compactness of atoms} together, we obtain:
 
 \begin{theorem}\label{StKMT equivalent to StK}
 The dual adjunction between $\StLCSob$ and $\StLCMT$ restricts to a dual adjunction between $\StKSob$ and  $\StKMT$, and the dual equivalence between $\StLCSob$ and $\StLCSMT$ restricts to a dual equivalence
between $\StKSob$ and  $\StKSMT$.
\end{theorem}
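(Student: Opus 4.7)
The plan is to derive \cref{StKMT equivalent to StK} as a straightforward specialization of \cref{StLC equivalent to StLS} by verifying that the two functors involved, $\at$ and $\P$, preserve the compactness condition that cuts out the stably compact subcategories from the stably locally compact ones. Since $\StKMT$ is defined as the full subcategory of $\StLCMT$ of compact objects, and $\StKSob$ is the analogous full subcategory of $\StLCSob$, once the objects are handled we get the adjunction (and the equivalence in the spatial case) for free by restriction, because the morphism classes are inherited and the units/co-units from \cref{StLC equivalent to StLS} restrict.

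First I would check that $\at : \StLCMT \to \StLCSob$ sends $\StKMT$ into $\StKSob$. By \cref{StLS to StLC}, if $M \in \StLCMT$ then $\at(M) \in \StLCSob$. If in addition $M$ is compact, then $\at(M)$ is compact by \cref{compactness of atoms} (which applies since $\StKMT \subseteq \LCSobMT$). Hence $\at(M) \in \StKSob$. Next I would check the other direction: $\P : \StLCSob \to \StLCMT$ sends $\StKSob$ into $\StKMT$. By \cref{StLC to StLS}, $X \in \StLCSob$ gives $\P(X) \in \StLCMT$. If $X$ is compact, then in $\P(X)$ the top element $X$ itself is compact (every open cover of $X$ has a finite subcover), so $\P(X)$ is a compact MT-algebra, hence lies in $\StKMT$.

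With both restrictions on objects established, the rest is formal. The unit $\eta : 1_{\StLCMT} \to \P \circ \at$ and co-unit $\varepsilon : 1_{\StLCSob} \to \at \circ \P$ of the dual adjunction from \cref{StLC equivalent to StLS} are already proper MT-morphisms and proper continuous maps respectively for all objects in $\StLCMT$ and $\StLCSob$; in particular they remain so on the subcategories $\StKMT$ and $\StKSob$, giving the restricted dual adjunction. Finally, the restriction to the spatial case follows because a spatial MT-algebra remains spatial when viewed as a compact object, so $\StKSMT = \StKMT \cap \SMT$ corresponds under $\at$ precisely to $\StKSob$ (via \cref{MT adjunction}, $\eta$ is an isomorphism iff $M$ is atomic), yielding the dual equivalence between $\StKSob$ and $\StKSMT$.

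There is no real obstacle here: the content is entirely packaged in \cref{compactness of atoms} and \cref{StLC equivalent to StLS}, and the only thing to remark is that the defining extra axiom of $\StKMT$ (respectively $\StKSob$) inside $\StLCMT$ (respectively $\StLCSob$) is preserved by $\at$ and $\P$, which is exactly what \cref{compactness of atoms} supplies on one side and is immediate on the other. The proof should therefore consist of a one-line invocation of \cref{LC equivalent to LS,StLC equivalent to StLS,compactness of atoms}.
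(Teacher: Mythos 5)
Your proposal is correct and matches the paper's proof, which likewise obtains the result by combining \cref{StLC equivalent to StLS} with \cref{compactness of atoms} (the latter being an ``iff'' already handles both directions of the object-level check, including the $\P$ side via $\at(\P(X))\cong X$). The extra detail you supply about units, co-units, and morphism classes restricting is exactly the routine content the paper leaves implicit.
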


As an consequence, we arrive at the second result mentioned at the beginning of this section:

\begin{corollary}
$\StKFrm$ is dually equivalent to $\StKSob$. 
\end{corollary}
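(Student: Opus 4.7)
The plan is very short: this corollary is a formal consequence of the two structural results just proved in this section, so I would simply compose them. By \cref{StKMT and StKFRm}, the functor $\O$ restricts to an equivalence between $\StKSMT$ and $\StKFrm$, with quasi-inverse $\P\circ\pt$. By \cref{StKMT equivalent to StK}, the functors $\at$ and $\P$ restrict to a dual equivalence between $\StKSMT$ and $\StKSob$. Composing these two, the composites $\Omega = \O\circ\P : \StKSob \to \StKFrm$ and $\pt : \StKFrm \to \StKSob$ implement a dual equivalence between $\StKFrm$ and $\StKSob$, and the natural isomorphisms witnessing this are obtained by pasting the units and co-units $\zeta^{-1}$, $(\P\vartheta)\circ\eta$, $\eta$, and $\varepsilon$ supplied by the two theorems.

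Concretely, I would just write: ``Apply \cref{StKMT and StKFRm,StKMT equivalent to StK}.'' This is entirely parallel to the proof of the analogous corollary $\StCFrm \simeq^{\mathrm{op}} \StLCSob$ given earlier in the section, and to the pattern already established by the commutative triangle diagram that appears right after \cref{StLC equivalent to StLS}. There is no obstacle here beyond bookkeeping of natural isomorphisms, which is automatic since both input statements are already equivalences of categories, and composition of an equivalence with a dual equivalence is again a dual equivalence.
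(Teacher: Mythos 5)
Your proposal is correct and is essentially identical to the paper's own proof, which likewise just composes the equivalence $\StKSMT \simeq \StKFrm$ from \cref{StKMT and StKFRm} with the dual equivalence $\StKSMT \simeq^{\mathrm{op}} \StKSob$ from \cref{StKMT equivalent to StK}. No further comment is needed.
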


\begin{proof}
Apply \cref{StKMT and StKFRm,StKMT equivalent to StK}.
\end{proof}

We thus arrive at the following diagram that commutes up to natural isomorphism:

\begin{center}
      \begin{tikzcd}[row sep=2em]
      \StKMT \arrow[->]{rr}{\O}
      \ar[shift right=.5ex,"\at"',  shorten <= 7pt]{dr}
      & & \StKFrm 
      \ar[dl, "pt", shift left=.5ex, shorten <= 7pt]
      \\
      &\StKSob 
      \ar[shift right=.5ex,"\P"', shorten >= 7pt]{ul} 
      \arrow[ur, "\Omega", shift left=.5ex,  shorten >= 7pt] &
      \end{tikzcd}
\end{center}

\section{Locally compact Hausdorff MT-algebras}\label{sec: 6}
In this section we restrict equivalences and dual equivalences of the previous two sections to the Hausdorff setting. Among other things, this yields an alternative approach to Isbell duality between the categories of compact Hausdorff spaces and compact regular frames.

We begin by recalling 
the definitions of $T_1$ and regular MT-algebras.
An MT-algebra $M$ is a {\em $T_1$-algebra} if $\C(M) $ join-generates $M$. Equivalently, $M$ is a {\em $T_1$-algebra} provided for all $a \neq 0$, there exists $c \in \C(M)$ such that $0 \neq c \leq a$. 

An MT-algebra $M$ is {\em regular} 
if $M$ is a $T_1$-algebra and for all $a \in \O(M)$ we have 
\begin{equation*}
a = \bigvee \{b \in \O(M) \mid \diamond b \leq a\}. 
\end{equation*}

To define Hausdorff MT-algebras, we recall 
that an element $a$ of an MT-algebra $M$ is {\em regular closed} if $a = \diamond \square a$, and that $a$ is \emph{approximated from above} by regular closed elements if 
\[
a=\bigwedge \{\diamond \square c \mid a \leq \square c\}.
\] 
Let $\GC(M)$ be the collection of such elements of $M$. Then an MT-algebra $M$ is {\em Hausdorff} 
if $\GC(M)$ join-generates~$M$. 
The following two lemmas are proved in 
\cite[Sec.~6, 7]{BezhanishviliRR2023}. 

\begin{lemma} \label{lem: separation properties}
Let $M$ be an MT-algebra.
\begin{enumerate} [ref=\thelemma(\arabic*)]
\item If $M$ is regular, then $M$ is Hausdorff. \label[lemma]{T3 implies T2} 
 \item If $M$ is Hausdorff, then $M$ is sober and $T_1$. \label[lemma]{T2 implies T1}
\item If $M$ is $T_1$, then $M$ is $T_{1/2}$. \label [lemma]{T1 implies T1/2}

\end{enumerate}
\end{lemma}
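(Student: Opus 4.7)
The plan is to treat the three parts separately, leveraging the inclusions among the subsets $\C(M)$, $\GC(M)$, $\LC(M)$, and $\WLC(M)$. Part~(3) is essentially immediate: every $c \in \C(M)$ satisfies $c = 1 \wedge c$ with $1 \in \O(M)$, so $\C(M) \subseteq \LC(M)$, and the $T_1$ hypothesis that $\C(M)$ join-generates $M$ passes to $\LC(M)$.

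For part~(1), regularity includes being $T_1$, so $\C(M)$ already join-generates $M$, and it suffices to establish $\C(M) \subseteq \GC(M)$. Given $c \in \C(M)$, apply regularity to the open element $\neg c$, obtaining $\neg c = \bigvee\{b \in \O(M) \mid \diamond b \leq \neg c\}$. For each such $b$, the element $u := \square\neg b$ is open with $c \leq u$, and $\diamond u \leq \diamond\neg b = \neg b$. Taking meets yields $\bigwedge\{\diamond u \mid u \in \O(M),\ c \leq u\} \leq \bigwedge\{\neg b \mid \diamond b \leq \neg c\} = \neg\neg c = c$, while the reverse inequality is trivial, so $c \in \GC(M)$.

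For part~(2), the $T_1$ direction is quick: each generator $\diamond\square d$ of $\GC(M)$ is closed, and meets of closed elements remain closed in any MT-algebra (since $a_i = \diamond a_i$ implies $\diamond\bigwedge a_i \leq a_j$ for every $j$), so $\GC(M) \subseteq \C(M)$. Combined with part~(3) and the inclusion $\LC(M) \subseteq \WLC(M)$, $M$ is also $T_0$. For sobriety, the plan is to show that any join-irreducible $p \in \C(M)$ must be an atom of $M$; since atoms in a Hausdorff MT-algebra belong to $\GC(M)$ (and hence are closed), this gives $p = \diamond p$ as required. Supposing $p$ is not an atom, $\GC$-generation applied to any $0 < a < p$ produces a proper subelement $g \in \GC(M)$ with $0 < g < p$. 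The representation $g = \bigwedge\{\diamond u \mid g \leq u \in \O(M)\}$ together with $p \not\leq g$ yields an open $u$ with $g \leq u$ and $p \not\leq \diamond u$. Combining this with the identity $u \wedge \diamond a \leq \diamond(u \wedge a)$ for open $u$, which forces $u \wedge \diamond(p \wedge \neg\diamond u) = 0$ and hence $\diamond(p \wedge \neg\diamond u) < p$, produces the decomposition $p = (p \wedge \diamond u) \vee \diamond(p \wedge \neg\diamond u)$ as a join of two proper closed elements, contradicting join-irreducibility of $p$ in $\C(M)$.

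The main obstacle lies in part~(2), where the sobriety argument requires a delicate blending of $\GC$-generation, the Kuratowski axioms (especially the open-closed identity above), and join-irreducibility in $\C(M)$, in order to transport into the abstract MT-algebra setting the topological fact that closed irreducible subsets of a Hausdorff space are singletons.
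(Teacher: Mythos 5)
The paper does not actually prove this lemma here---it is quoted from the authors' earlier work \cite[Sec.~6, 7]{BezhanishviliRR2023}---so there is no in-paper argument to compare against; I can only assess your proof on its own terms, and it is correct and self-contained. Parts (1) and (3) check out: for (1) the key observation is that $\bigwedge\{\diamond\square d \mid c \le \square d\}$ is just the meet of $\diamond u$ over all open $u \ge c$, and restricting to the subfamily $u = \square\neg b$ with $\diamond b \le \neg c$ bounds that meet above by $\bigwedge \neg b = \neg\bigvee b = \neg\neg c = c$, while the reverse inequality is automatic. The genuinely delicate point is sobriety in (2), and your argument there is sound: the inclusion $u \wedge \diamond a \le \diamond(u \wedge a)$ for open $u$ (valid because $\diamond(\neg u \wedge a) \le \neg u$) forces $u \wedge \diamond(p \wedge \neg\diamond u) = 0$; since $0 \ne g \le u \wedge p$ this makes $\diamond(p \wedge \neg\diamond u)$ a proper closed part of $p$, while $p \not\le \diamond u$ makes $p \wedge \diamond u$ proper as well, and $p \wedge \neg\diamond u \le \diamond(p \wedge \neg\diamond u) \le \diamond p = p$ gives the decomposition $p = (p \wedge \diamond u) \vee \diamond(p \wedge \neg\diamond u)$ contradicting join-irreducibility in $\C(M)$. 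This is a faithful algebraic transcription of the topological fact that irreducible closed subsets of a Hausdorff space are singletons, and your use of the $\GC$-representation is essential---$T_1$ alone would not suffice (cofinite topologies give non-singleton irreducible closed sets)---so nothing is being smuggled in. The side remark that atoms lie in $\GC(M)$ is unnecessary (a join-irreducible $p$ of $\C(M)$ is already closed by hypothesis, so once it is an atom you may take $x = p$), but it is harmless.
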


\begin{lemma}\label{O is ess surj on reg}
$ $
\begin{enumerate}[ref=\thelemma(\arabic*)]
\item If $M$ is a regular MT-algebra, then $\O(M)$ is a regular frame.\label[lemma]{regular algebra to frm}
\item If $L$ is a regular frame, then $\overline{B(L)}$ is a regular MT-algebra.\label[lemma]{regular frm to algebra}
\item If $M$ is a Hausdorff MT-algebra, then $\at(M)$ is a Hausdorff space.\label[lemma]{Hausdorff algebra to space}
\item $X$ is a Hausdorff space iff $(\P(X),\int)$ is a Hausdorff algebra.\label[lemma]{Hausdorff space to algebra}
\end{enumerate}
\end{lemma}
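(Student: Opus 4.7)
The plan is to handle the four items in order, each leveraging the basic correspondence between opens in $\overline{B(L)}$ and elements of $L$ together with the identity $\diamond b = \neg\square\neg b$. For (1), I would start from the regularity decomposition $u = \bigvee\{b \in \O(M) \mid \diamond b \leq u\}$ and reinterpret it frame-theoretically: the pseudocomplement of $b$ in the frame $\O(M)$ is $b^* = \square\neg b = \neg\diamond b$, so $\diamond b \leq u$ is equivalent to $b^* \vee u = 1$, i.e., to $b$ being rather below $u$ in $\O(M)$. Hence $u$ is a join of elements rather below it, which is exactly regularity of $\O(M)$.

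For (2), the decomposition property for opens of $\overline{B(L)}$ follows symmetrically from the same pseudocomplement calculation applied to $L$. The main obstacle is verifying the $T_1$ axiom, that $\C(\overline{B(L)})$ join-generates $\overline{B(L)}$. My plan is to use that every element of the MacNeille completion is a join of elements of $B(L)$, and every element of $B(L)$ is a finite join of locally closed elements $u \wedge c$ with $u \in L$ open and $c$ closed. Regularity of $L$ gives $u = \bigvee\{v \mid v \prec u\}$, and for each such $v$ one has $\diamond v \leq u$, so
\[
u \wedge c \;=\; \bigvee\{v \wedge c \mid v \prec u\} \;=\; \bigvee\{\diamond v \wedge c \mid v \prec u\},
\]
the last equality because $v \leq \diamond v \leq u$. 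Each $\diamond v \wedge c$ is a meet of two closed elements, hence closed, so $u \wedge c$ is a join of closed elements, finishing $T_1$.

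For (3), given distinct atoms $x,y$ of a Hausdorff MT-algebra $M$, I would first observe that because $\GC(M)$ join-generates and $x$ is an atom, $x$ itself must lie in $\GC(M)$, so $x = \bigwedge\{\diamond\square c \mid x \leq \square c\}$. Since $y \not\leq x$, there exists $c$ with $x \leq \square c$ but $y \not\leq \diamond\square c$. Then $\square c$ and $\neg\diamond\square c$ are disjoint opens of $M$ separating $x$ from $y$, and applying the frame homomorphism $\eta|_{\O(M)}$ yields disjoint opens in $\at(M)$ separating the corresponding points. For (4), the forward direction follows the same template: for $X$ Hausdorff and $x \in X$, the singleton $\{x\}$ equals $\bigcap\{\overline{U} \mid U \in \Omega(X),\ x \in U\}$, and the closure of an open set is regular closed, so $\{x\} \in \GC(\P(X),\int)$ and singletons witness Hausdorffness of $(\P(X),\int)$. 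The converse is immediate from (3) applied to $(\P(X),\int)$ together with the homeomorphism $\varepsilon : X \to \at(\P(X))$ of \cref{MT adjunction}.
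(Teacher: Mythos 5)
Your four arguments are all correct. Note, however, that this paper does not actually prove the lemma: it is imported verbatim from \cite[Sec.~6, 7]{BezhanishviliRR2023}, so there is no in-paper proof to compare against, and your write-up amounts to an independent reconstruction. The reconstruction is sound. The key observation driving (1) and the first half of (2) --- that the pseudocomplement of $b$ in $\O(M)$ is $\square\neg b=\neg\diamond b$, so that $\diamond b\le u$ is exactly the well-inside relation $b\prec u$ in $\O(M)$ --- is the right bridge between the two notions of regularity. In (2) the only steps that deserve explicit mention are the ones you implicitly rely on: $B(L)$ is join-dense in its MacNeille completion; each element of $B(L)$ is a finite join of elements $u\wedge\neg v$ with $u,v\in L$; joins of open elements computed in $\O(\overline{B(L)})\cong L$ agree with joins in $\overline{B(L)}$ (so the regularity decomposition of $L$ transfers); and the sandwich $v\wedge c\le\diamond v\wedge c\le u\wedge c$ which justifies replacing $v$ by $\diamond v$ under the join. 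All of these hold, and finite meets of closed elements are closed, so the $T_1$ verification goes through. In (3), the step ``an atom below a join of elements of $\GC(M)$ must itself lie in $\GC(M)$'' is valid because every joinand lies below the join, hence is $0$ or the atom itself; the separating opens $\square c$ and $\neg\diamond\square c=\square\neg\square c$ are indeed disjoint and are carried to disjoint open sets by $\eta$, which by definition generates the topology on $\at(M)$. In (4), the identification of $\GC(\P(X),\int)$ via $\{x\}=\bigcap\{\overline{U}\mid x\in U \mbox{ open}\}$ is the standard characterization of Hausdorffness, and deducing the converse from (3) via the homeomorphism $\varepsilon$ of \cref{MT adjunction} is legitimate. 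I see no gaps.
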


Let $\LCHMT$ be the full subcategory of $\LCSobMT$ consisting of locally compact Hausdorff MT-algebras.

\begin{lemma}\label{LCHMT implies spatial}
$M \in \LCHMT \implies M \in \SMT$.
\end{lemma}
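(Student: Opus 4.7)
The plan is to assemble this spatiality conclusion from results already in hand, chaining through three observations: $\O(M)$ is continuous (hence spatial), $M$ is a sober $T_{1/2}$-algebra, and so $M$ itself is spatial via \cref{ when is M spatial}.

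First, since $M\in\LCHMT$, in particular $M$ is Hausdorff, so by \cref{T2 implies T1} it is sober and $T_1$; applying \cref{T1 implies T1/2}, it is also $T_{1/2}$. Thus $M$ fulfils the hypotheses of \cref{ when is M spatial}: being spatial is equivalent to $\O(M)$ being a spatial frame.

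Second, because $M$ is locally compact, \cref{LC implies CF} gives that $\O(M)$ is a continuous frame. Then \cref{continuous} tells us that every continuous frame is spatial, so $\O(M)$ is spatial. Combining with the previous step via \cref{ when is M spatial}, we conclude that $M$ is spatial, i.e.\ $M\in\SMT$, as required.

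There is no real obstacle here; the result is essentially a bookkeeping exercise reducing Hausdorffness to the $T_{1/2}$ hypothesis of \cref{ when is M spatial} and then invoking the already-established fact that local compactness of $M$ forces continuity (and hence spatiality) of $\O(M)$. The only thing to be careful about is routing the Hausdorff hypothesis correctly through $T_1\Rightarrow T_{1/2}$ so that \cref{ when is M spatial} is applicable.
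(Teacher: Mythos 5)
Your proposal is correct and follows the paper's own proof exactly: sobriety and $T_{1/2}$ from Hausdorffness via \cref{T2 implies T1} and \cref{T1 implies T1/2}, spatiality of $\O(M)$ via \cref{LC implies CF} and \cref{continuous}, and the conclusion via \cref{when is M spatial}. No differences worth noting.
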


\begin{proof}
Let $M \in \LCHMT$. Then $M$ is a sober MT-algebra by \cref{T2 implies T1}.
Since $M$ is locally compact, $\O(M)$ is a continuous frame by \cref{LC implies CF}, and hence $\O(M)$ is a spatial frame by \cref{continuous}.
Thus, $M$ is a spatial MT-algebra by \cref{T1 implies T1/2, when is M spatial}. 
\end{proof}

The next result generalizes the corresponding well-known topological result 
to MT-algebras.

\begin{lemma}\label{K(M) inside C(M)}
    If $M$ is a Hausdorff MT-algebra, then $\K(M)\subseteq\C(M)$.
    \end{lemma}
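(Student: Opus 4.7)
The plan is to mimic the classical topological proof that compact subsets of Hausdorff spaces are closed, but carried out pointfreely by exploiting the definition of Hausdorff via the join-generation by $\GC(M)$. To show $k\in\K(M)$ is closed, I will prove that $\neg k$ is open, i.e.\ $\neg k\le\square\neg k$. Since $M$ is Hausdorff, $\GC(M)$ join-generates $M$, so
\[
\neg k=\bigvee\{a\in\GC(M)\mid a\le\neg k\},
\]
and it therefore suffices to prove that every $a\in\GC(M)$ with $a\wedge k=0$ satisfies $a\le\square\neg k$.

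Fix such an $a$. By definition of $\GC(M)$,
\[
a=\bigwedge F, \quad\text{where}\quad F:=\{\diamond\square c\mid a\le\square c\}\subseteq\C(M).
\]
From $a\wedge k=0$ we get $\left(\bigwedge F\right)\wedge k=0$. Now I invoke \cref{compactelement property}, the finite intersection property characterization of compactness: since $k\in\K(M)$ and $F$ is a family of closed elements with $(\bigwedge F)\wedge k=0$, there exist finitely many $c_1,\dots,c_n$ with $a\le\square c_i$ and $\left(\bigwedge_{i=1}^n\diamond\square c_i\right)\wedge k=0$. Set $V:=\bigwedge_{i=1}^n\square c_i$. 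Then $V$ is open (a finite meet of opens), $a\le V$, and $V\le\bigwedge_{i=1}^n\diamond\square c_i$, so $V\wedge k=0$. Hence $V\le\neg k$, and because $V$ is open, $V=\square V\le\square\neg k$. Therefore $a\le V\le\square\neg k$, which completes the reduction and yields $\neg k\le\square\neg k$, i.e.\ $k\in\C(M)$.

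The main conceptual step, and the one that makes the pointfree version work at all, is recognizing that the correct substitute for ``pick a point $x\notin k$ and separate it from each point of $k$'' is ``pick $a\in\GC(M)$ below $\neg k$'' — the regular-closed approximation from above supplies exactly the cofiltered family of closed elements needed to feed into the FIP-form of compactness. Once this is set up, the finite selection from $F$ and the passage from $\diamond\square c_i$ to its open core $\square c_i$ are routine, and no appeal to atoms, local compactness, or spatiality is required, so the lemma holds for arbitrary Hausdorff MT-algebras.
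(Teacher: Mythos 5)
Your proof is correct. It uses the same two pillars as the paper's — the join-generation of $M$ by $\GC(M)$ and the finite-intersection-property form of compactness from \cref{compactelement property} applied to the family $\{\diamond\square c \mid a\le\square c\}$ — but it is arranged as a direct argument where the paper argues by contraposition. The paper starts from a non-closed $a$, picks $0\neq b\in\GC(M)$ with $b\le\diamond a\wedge\neg a$, and shows that the family $S=\{\diamond\square c\mid b\le\square c\}$ of closed elements satisfies $(\bigwedge S)\wedge a=0$ while every finite subfamily still meets $a$; this requires noting that $S$ is directed and a small step using that $\neg\square c$ is closed to pass from $\diamond a\nleq\neg\square c$ to $a\nleq\neg\square c$. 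Your version avoids both of these: after extracting the finite subfamily $\diamond\square c_1,\dots,\diamond\square c_n$, you drop down to the open core $V=\bigwedge_{i=1}^n\square c_i$, which is an open element containing $a$ and disjoint from $k$, giving $a\le\square\neg k$ directly. This more closely mirrors the classical proof that a compact subset of a Hausdorff space is closed (separate each ``point'' outside $K$ from $K$ by an open set), and it yields slightly more information, namely the explicit separating open $V$; the paper's contrapositive is marginally shorter in that it never needs to name $V$. One cosmetic remark: when the finite subfamily is empty you get $k=0$, which is closed anyway, so no harm — but it is worth a half-sentence to dispose of that degenerate case.
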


\begin{proof}
       Let $a \notin \C(M)$. Then 
              $\diamond a \nleq a$, so $\diamond a \wedge \neg a \neq 0$. Since $M$ is Hausdorff, there is $b \in \GC(M)$ such that $0 \neq b \leq \diamond a \wedge \neg a$. Because $b \in \GC(M)$, we have
              $b=\bigwedge\{\diamond \square c \mid b \leq \square c\}$. 
              Since $b \wedge a =0$, we have $\bigwedge \{\diamond \square c \mid b \leq \square c\} \wedge a =0$. Let $S=\{\diamond \square c \mid b \leq \square c\}$ and $T\subseteq S$ be finite. Since $S$ is directed, there is $\diamond \square c \in S$ such that $\diamond \square c \leq \bigwedge T$.        We have $0 \neq b \leq \diamond a \wedge \square c$, which implies that $\diamond a \nleq \neg \square c$. Since $\neg \square c$ is closed, this yields $a \nleq \neg \square c$, 
              and hence $ a \wedge \square c \neq 0$. Therefore, 
              $a \wedge \diamond \square c \neq 0$, 
              and so $a \wedge \bigwedge T \neq 0$. This together with $a\wedge \bigwedge S=0$ implies that $a \notin \K(M)$ (see \cref{compactelement property}).
                                              \end{proof}

\begin{lemma} \label{LCMT2 implies regular}
If $M$ is a locally compact Hausdorff MT-algebra, then $M$ is regular.
\end{lemma}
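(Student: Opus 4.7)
The plan is to establish regularity by verifying the two defining conditions separately. For the $T_1$ condition, I would simply invoke \cref{T2 implies T1}, which gives that every Hausdorff MT-algebra is $T_1$ (in fact already sober). So the whole content of the proof lies in showing the approximation identity $a = \bigvee\{b \in \O(M) \mid \diamond b \leq a\}$ for every open element $a$.

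The $(\geq)$ inequality is trivial since each $b$ in the indexing set lies below $a$. For the nontrivial direction $(\leq)$, my idea is to combine local compactness with \cref{K(M) inside C(M)}. Since $M$ is locally compact, $a = \bigvee\{v \in \O(M) \mid v \nest a\}$, so it suffices to show that every such $v$ already belongs to $\{b \in \O(M) \mid \diamond b \leq a\}$. Fix $v \in \O(M)$ with $v \nest a$. By definition of $\nest$, there exists $k \in \K(M)$ with $v \leq k \leq a$. Because $M$ is Hausdorff, \cref{K(M) inside C(M)} yields $k \in \C(M)$, i.e.\ $\diamond k = k$. Monotonicity of $\diamond$ then gives $\diamond v \leq \diamond k = k \leq a$, and hence $v$ itself is a member of the set whose join we are taking. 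Assembling this over all $v \nest a$ yields $a \leq \bigvee\{b \in \O(M) \mid \diamond b \leq a\}$.

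The step I expect to carry the real weight is the passage from $v \nest a$ to $\diamond v \leq a$, which is where the Hausdorff hypothesis is essential: it is \cref{K(M) inside C(M)} that converts a compact element into a closed one, so that monotonicity of $\diamond$ closes up the witness $k$ without leaving $a$. Everything else is either definitional (local compactness, regularity) or a direct citation of previously established lemmas, so I do not anticipate any technical obstacle beyond correctly organizing the chain $v \leq k$, $k = \diamond k$, and $k \leq a$.
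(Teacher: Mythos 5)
Your proposal is correct and matches the paper's own proof essentially verbatim: both reduce to showing that every $v \nest a$ satisfies $\diamond v \leq a$ by extracting the compact witness $k$, closing it via \cref{K(M) inside C(M)}, and using monotonicity of $\diamond$, with the $T_1$ part handled by \cref{T2 implies T1}. No gaps.
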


\begin{proof}
Since $M$ is Hausdorff, $M$ is a $T_1$-algebra by \cref{T2 implies T1}. 
Let $a \in \O(M)$ and let $b \in \O(M)$ with $b \nest a$. Then there is $k \in \K(M)$ such that $b \leq k \leq a$. Because $M$ is Hausdorff, \cref{K(M) inside C(M)} implies that $\diamond k=k$, so $\diamond b \leq k \leq a$. Since $M$ is locally compact, 
$a = \bigvee \{b \in \O(M) \mid b \nest a\}$, so $a = \bigvee \{b \in \O(M) \mid \diamond b \leq a\}$, and hence $M$ is regular.
\end{proof}

Let $\CRegFrm$ be the full subcategory of $\CFrm$ consisting of continuous regular frames.

\begin{lemma}\label{equivalence of LCHMT and CRegFrm}
$M \in \LCHMT \iff \O(M) \in \CRegFrm$.
\end{lemma}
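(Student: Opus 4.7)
The proof is a straightforward assembly of lemmas already established, treated one direction at a time. The plan is to chain together the regularity transfer results between MT-algebras and their frames of opens with the local compactness/continuity correspondence of \cref{CFrm=LC}.

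For $(\Rightarrow)$, suppose $M \in \LCHMT$. Local compactness of $M$ together with \cref{LC implies CF} gives that $\O(M)$ is a continuous frame. Combining the Hausdorff hypothesis with local compactness, \cref{LCMT2 implies regular} makes $M$ regular, and then \cref{regular algebra to frm} passes this to regularity of $\O(M)$ as a frame. Hence $\O(M) \in \CRegFrm$.

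For $(\Leftarrow)$, the standing inclusion $\LCHMT \subseteq \LCSobMT$ lets us treat $M$ as sober. Suppose $\O(M) \in \CRegFrm$. Continuity of $\O(M)$ and sobriety of $M$ yield local compactness of $M$ by \cref{CF implies LC}. The remaining task is Hausdorffness, which I would obtain by routing through the atoms. Since $\O(M)$ is continuous, it is spatial by \cref{continuous}, so \cref{eta injective on opens} provides a frame isomorphism $\O(M) \cong \Omega(\at(M))$. Thus $\at(M)$ is a sober space whose frame of opens is regular, so $\at(M)$ is a regular $T_0$ and hence Hausdorff space. Now \cref{Hausdorff space to algebra} gives that $\P(\at(M))$ is a Hausdorff MT-algebra. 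To transfer Hausdorffness back to $M$, I would invoke \cref{ when is M spatial} (since $M$ is sober, $T_{1/2}$, and $\O(M)$ is spatial) to conclude that $M$ is spatial, so $M \cong \P(\at(M))$ via the unit $\eta$ and the Hausdorff property is inherited.

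The main obstacle is this last step of the converse: frame regularity of $\O(M)$ only constrains open elements, whereas Hausdorffness of $M$ is a join-generation condition on \emph{all} of $M$ via $\GC(M)$. Bridging this gap forces one to first establish spatiality of $M$ and then import the Hausdorff property through the dual equivalence between $\Sob$ and $\SobSMT$ recorded in \cref{thm: MTSob and Sob}.
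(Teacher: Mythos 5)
Your forward direction is exactly the paper's: \cref{LC implies CF} for continuity of $\O(M)$, \cref{LCMT2 implies regular} for regularity of $M$, and \cref{regular algebra to frm} to pass regularity down to $\O(M)$. No issues there.

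The converse, however, has a genuine gap, and it sits precisely at the point you flag as the ``main obstacle.'' First, you begin by ``treating $M$ as sober'' on the grounds that $\LCHMT\subseteq\LCSobMT$; but membership in $\LCHMT$ is the conclusion of this direction, so that inclusion is not available --- sobriety of $M$ must be derived, not assumed. Second, even granting sobriety, your appeal to the spatiality criterion (that a sober $T_{1/2}$-algebra is spatial iff $\O(M)$ is) requires $M$ to be a $T_{1/2}$-algebra, which you assert parenthetically but never establish; sobriety only yields $T_0$. Without spatiality of $M$ itself, knowing that $\P(\at(M))$ is Hausdorff says nothing about $M$, since $\eta$ is then merely onto rather than an isomorphism. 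The paper's converse avoids both problems by staying on the algebraic side: from $\O(M)\in\CRegFrm$ it forms $\overline{B(\O(M))}$, which is regular by \cref{regular frm to algebra}, hence Hausdorff by \cref{T3 implies T2}; it identifies this algebra with $M$ via \cref{T_1/2}, and only then reads off sobriety from \cref{T2 implies T1} and local compactness from \cref{CF implies LC}. Both arguments ultimately lean on $M\cong\overline{B(\O(M))}$, i.e.\ on $M$ being $T_{1/2}$, but in the paper's route that is the single hypothesis doing the work, whereas your detour through $\at(M)$ additionally presupposes sobriety and imports the external point-set fact that a $T_0$ space with regular open-set frame is Hausdorff. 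If you repair your argument by first deriving sobriety and $T_{1/2}$-ness, you end up running the paper's proof anyway, which makes the passage through $\at(M)$ redundant.
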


\begin{proof}
First suppose $M \in \LCHMT$. Then $M$ is locally compact, so $\O(M)$ is a continuous frame by \cref{LC implies CF}. Also, $M$ is regular by \cref{LCMT2 implies regular}, and hence $\O(M)$ is a regular frame by \cref{regular algebra to frm}.
Conversely, suppose $\O(M) \in \CRegFrm$. Then $\overline{B(\O(M))}$ 
is a regular MT-algebra by \cref{regular frm to algebra}, and so a Hausdorff MT-algebra by \cref{T3 implies T2}.
Therefore, $\overline{B(\O(M))} \cong M$ by \cref{T_1/2}. Since $M$ is also sober by \cref{T2 implies T1}, $M$ is locally compact by \cref{CF implies LC}. Consequently, $M \in \LCHMT$.
\end{proof}

\cref{LCHMT implies spatial,equivalence of LCHMT and CRegFrm} 
together with \cref{SMT_LS reflective subcategory} yield:

\begin{theorem}\label{CReg LCMT2}
     The restriction $\O:\LCHMT \to \CRegFrm$ is an equivalence.
\end{theorem}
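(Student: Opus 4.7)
The plan is to derive this essentially for free from the equivalence $\O : \LCSobSMT \to \CFrm$ already established in \cref{SMT_LS reflective subcategory}, by observing that $\LCHMT$ sits inside $\LCSobSMT$ as a full subcategory whose image under $\O$ is (up to isomorphism) exactly $\CRegFrm$.

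First I would verify the set-up. By \cref{LCHMT implies spatial}, every locally compact Hausdorff MT-algebra is spatial, so $\LCHMT$ is a full subcategory of $\LCSobSMT$ (morphisms on both sides are proper MT-morphisms). By definition, $\CRegFrm$ is a full subcategory of $\CFrm$. By \cref{equivalence of LCHMT and CRegFrm}, the assignment $\O$ sends objects of $\LCHMT$ to objects of $\CRegFrm$; since morphisms are unchanged from the ambient categories, $\O$ restricts to a well-defined functor $\LCHMT \to \CRegFrm$, and this restriction is automatically full and faithful because the unrestricted functor $\O : \LCSobSMT \to \CFrm$ is full and faithful.

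It then only remains to check essential surjectivity. Given $L \in \CRegFrm$, since $L$ is in particular a continuous frame, \cref{SMT_LS reflective subcategory} supplies some $M \in \LCSobSMT$ with $\O(M) \cong L$ in $\CFrm$. Because $L \cong \O(M)$ lies in $\CRegFrm$, the backward direction of \cref{equivalence of LCHMT and CRegFrm} forces $M \in \LCHMT$. Thus every object of $\CRegFrm$ is isomorphic to $\O(M)$ for some $M \in \LCHMT$, so the restricted functor is essentially surjective, and hence an equivalence.

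There is no real obstacle here: the entire content has been packaged into \cref{LCHMT implies spatial,equivalence of LCHMT and CRegFrm}, and the argument is the standard fact that an equivalence of categories restricts to an equivalence between any pair of full subcategories whose object classes correspond under the equivalence. The proof should therefore be only a couple of lines of citation.
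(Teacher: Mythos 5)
Your proof is correct and takes essentially the same route as the paper, which derives the theorem precisely by combining \cref{LCHMT implies spatial}, \cref{equivalence of LCHMT and CRegFrm}, and the equivalence of \cref{SMT_LS reflective subcategory}. The only difference is that you spell out the standard restriction-of-an-equivalence bookkeeping (fullness, faithfulness, essential surjectivity) that the paper leaves implicit.
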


Let $\LCHaus$ be the full subcategory of $\LCSob$ consisting of locally compact Hausdorff spaces.

\begin{lemma}\label{LCHaus and LCHMT}
Let $M \in \SMT$. Then $M \in \LCHMT \iff \at(M)\in \LCHaus$.
\end{lemma}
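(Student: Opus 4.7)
The plan is to prove both implications by carrying structure across the dual adjunction $(\P,\at)$, exploiting that for spatial $M$ the unit $\eta: M \to \P(\at(M))$ is an isomorphism (see \cref{MT adjunction}).

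For the forward implication, assume $M \in \LCHMT$. Since $M$ is locally compact and sober, \cref{LS to LC} gives that $\at(M)$ is a locally compact sober space. Because $M$ is Hausdorff, \cref{Hausdorff algebra to space} yields that $\at(M)$ is a Hausdorff space (a Hausdorff space is automatically sober, but we get sobriety for free from the previous step). Putting these together, $\at(M) \in \LCHaus$.

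For the backward implication, assume $\at(M) \in \LCHaus$. Since $M$ is spatial (i.e.\ atomic), $\eta: M \to \P(\at(M))$ is an MT-algebra isomorphism by \cref{MT adjunction}, so it suffices to show $\P(\at(M)) \in \LCHMT$. Hausdorffness of $\at(M)$ combined with \cref{Hausdorff space to algebra} gives that $\P(\at(M))$ is a Hausdorff MT-algebra. Since $\at(M)$ is locally compact (and sober, as Hausdorff spaces are sober), \cref{LC to LS} yields that $\P(\at(M))$ is a locally compact sober MT-algebra. Hence $\P(\at(M)) \in \LCHMT$, and transporting along the isomorphism $\eta$ gives $M \in \LCHMT$.

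Neither direction presents a real obstacle since all the hard work has already been done: the transfer of local compactness between $\at(M)$ and $M$ is exactly the content of \cref{LS to LC,LC to LS}, the transfer of the Hausdorff property is \cref{Hausdorff algebra to space,Hausdorff space to algebra}, and spatiality supplies the isomorphism $\eta$ needed to identify $M$ with $\P(\at(M))$. The only small subtlety is making sure to invoke spatiality in the backward direction so that $M \cong \P(\at(M))$; without it, one would at best conclude that $\P(\at(M)) \in \LCHMT$.
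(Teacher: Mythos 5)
Your proof is correct and follows essentially the same route as the paper: the forward direction via \cref{LS to LC} and \cref{Hausdorff algebra to space}, and the backward direction by identifying $M$ with $\P(\at(M))$ through spatiality and then applying \cref{LC to LS} and \cref{Hausdorff space to algebra}. The only cosmetic difference is that the paper cites \cref{MT equivalence} for the isomorphism $M \cong \P(\at(M))$ where you invoke the unit $\eta$ from \cref{MT adjunction}; these amount to the same thing.
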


\begin{proof}
Let $M \in \LCHMT$. Then $\at(M)$ is locally compact by \cref{LS to LC} and it is Hausdorff by \cref{Hausdorff algebra to space}. 
 Conversely, since $M$ is spatial, $M$ is isomorphic to $\P(\at(M))$ by \cref{MT equivalence}. Thus, $M$ is locally compact by \cref{LC to LS} and it is Hausdorff by \cref{Hausdorff space to algebra}.
\end{proof}

\begin{theorem}\label{dual equivalence of locally compact hausdorff spaces and MT}
    The dual equivalence between $\LCSobSMT$ and $\LCSob$ restricts to a dual equivalence between $\LCHMT$ and $\LCHaus$.
\end{theorem}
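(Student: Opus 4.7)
The plan is to leverage the dual equivalence between $\LCSobSMT$ and $\LCSob$ from \cref{LC equivalent to LS} and show that the functors $\at$ and $\P$ witnessing it restrict to full subcategories on both sides. Since $\LCHMT$ and $\LCHaus$ are full subcategories (of $\LCSobMT$ and $\LCSob$ respectively), restricting an equivalence to full subcategories matched by the functors automatically yields an equivalence, provided the units and counits restrict.

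First I would verify that $\LCHMT$ actually sits inside $\LCSobSMT$. This is immediate from \cref{LCHMT implies spatial}, which gives $\LCHMT\subseteq\SMT$, combined with the fact that $\LCHMT$ is by definition contained in $\LCSobMT$. Hence $\LCHMT$ is a full subcategory of $\LCSobSMT$.

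Next I would check that the functors swap these full subcategories. For $M\in\LCHMT$, \cref{LCHaus and LCHMT} (forward direction) gives $\at(M)\in\LCHaus$, so the restriction $\at:\LCHMT\to\LCHaus$ is well defined. Conversely, for $X\in\LCHaus\subseteq\LCSob$, the algebra $\P(X)$ lies in $\LCSobSMT$ by \cref{LC equivalent to LS}, and since $\P(X)$ is spatial, \cref{LCHaus and LCHMT} (reverse direction) applied to $M=\P(X)$ yields $\P(X)\in\LCHMT$ (noting $\at(\P(X))\cong X$ via the homeomorphism $\varepsilon$ from \cref{MT adjunction}). Thus $\P:\LCHaus\to\LCHMT$ is also well defined.

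Finally, since both subcategories are full and the unit $(\P\vartheta)\circ\eta$ and counit $\varepsilon$ of the dual adjunction are natural isomorphisms on $\LCSobSMT$ and $\LCSob$ respectively (by \cref{LC equivalent to LS}), their restrictions to $\LCHMT$ and $\LCHaus$ remain natural isomorphisms. This gives the desired dual equivalence between $\LCHMT$ and $\LCHaus$. I do not anticipate any real obstacle here: all the nontrivial work has already been done in \cref{LCHMT implies spatial,LCHaus and LCHMT,LC equivalent to LS}, and the present theorem is essentially a bookkeeping step that assembles them.
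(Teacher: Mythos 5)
Your proposal is correct and follows essentially the same route as the paper, which simply cites \cref{LC equivalent to LS} together with \cref{LCHaus and LCHMT}; your additional appeal to \cref{LCHMT implies spatial} to place $\LCHMT$ inside $\LCSobSMT$ is exactly the implicit step the paper relies on. No gaps.
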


\begin{proof}
        Apply \cref{LC equivalent to LS,LCHaus and LCHMT}.
\end{proof}

\begin{corollary}
$\CRegFrm$ is dually equivalent to $\LCHaus$. 
\end{corollary}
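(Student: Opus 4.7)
The plan is to obtain this dual equivalence purely by composition of the two equivalences already established in this section, so no new constructions are required. The main ingredients are \cref{CReg LCMT2}, which gives a (covariant) equivalence $\O : \LCHMT \to \CRegFrm$, and \cref{dual equivalence of locally compact hausdorff spaces and MT}, which gives a dual equivalence between $\LCHMT$ and $\LCHaus$ implemented by the restrictions of $\P$ and $\at$.

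First I would note that composing a dual equivalence with an equivalence produces a dual equivalence. Concretely, the functor $\O \circ \P : \LCHaus \to \CRegFrm$ is a composition of the dual equivalence $\P : \LCHaus \to \LCHMT$ with the equivalence $\O : \LCHMT \to \CRegFrm$, hence is a dual equivalence; its quasi-inverse is $\at \circ (\P \circ \pt) : \CRegFrm \to \LCHMT \to \LCHaus$, where $\P \circ \pt$ is the quasi-inverse of $\O$ provided by \cref{SMT_LS reflective subcategory} (which applies since every object of $\CRegFrm$ is in particular in $\CFrm$, and the restriction to $\LCHMT \subseteq \LCSobSMT$ is given by \cref{LCHMT implies spatial}).

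Second, I would observe that this composite dualising functor is naturally isomorphic to the familiar $\Omega : \LCHaus \to \CRegFrm$. Indeed, for $X \in \LCHaus$ we have $\O(\P(X)) = \O(\P(X), \int) = \Omega(X)$ by definition, and on morphisms $\O(\P(f)) = \O(f^{-1}) = f^{-1}|_{\Omega(Y)} = \Omega(f)$. Likewise the inverse direction agrees with $\pt$ up to the natural isomorphism $\vartheta : \at \circ (\P \circ \pt) \to \pt$ from \cref{natural transformation}. So the dual equivalence can be written in the standard form $(\Omega, \pt)$.

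There is no substantive obstacle here; the only thing to be careful about is checking that the various restrictions of functors land in the correct subcategories, which has already been done in \cref{CReg LCMT2,dual equivalence of locally compact hausdorff spaces and MT}, together with the observation from \cref{LCHMT implies spatial} that $\LCHMT \subseteq \LCSobSMT$ so that the equivalence part of \cref{SMT_LS reflective subcategory} (rather than the mere adjunction) is available. The statement then follows by a single appeal to these two theorems.
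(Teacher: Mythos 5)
Your proof is correct and follows exactly the paper's route: the corollary is obtained by composing the equivalence $\O:\LCHMT\to\CRegFrm$ of \cref{CReg LCMT2} with the dual equivalence between $\LCHMT$ and $\LCHaus$ of \cref{dual equivalence of locally compact hausdorff spaces and MT}. Your additional identification of the composite with the standard functors $(\Omega,\pt)$ is a nice touch that the paper records only in the commuting triangle displayed after the corollary.
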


\begin{proof}
Apply \cref{CReg LCMT2,dual equivalence of locally compact hausdorff spaces and MT}.
\end{proof}

We thus arrive at the following diagram that commutes up to natural isomorphism: 

\begin{center}
      \begin{tikzcd}[row sep=2em]
      \LCHMT \arrow[->]{rr}{\O}
      \ar[shift right=.5ex,"\at"',  shorten <= 7pt]{dr}
      & & \CRegFrm 
      \ar[dl, "pt", shift left=.5ex, shorten <= 7pt]
      \\
      &\LCHaus 
      \ar[shift right=.5ex,"\P"', shorten >= 7pt]{ul} 
      \arrow[ur, "\Omega", shift left=.5ex,  shorten >= 7pt] &
      \end{tikzcd}
\end{center}

We now restrict our attention to compact Hausdorff MT-algebras. The next lemma is a direct generalization of the corresponding result for compact Hausdorff spaces.

\begin{lemma}\label{C(M)closed}
If $M$ is a compact MT-algebra, then $\C(M)\subseteq\K(M)$.
\end{lemma}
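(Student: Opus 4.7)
The plan is to mimic the classical argument that a closed subset of a compact space is compact. The key observation is that in any MT-algebra, $c \in \C(M)$ means $c = \diamond c$, which is equivalent to $\neg c = \square \neg c$, so $\neg c \in \O(M)$. This lets us enlarge a cover of $c$ by open elements to a cover of $1$ by open elements by adjoining $\neg c$, apply compactness of $M$, and then discard $\neg c$.

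In more detail, I would proceed as follows. Let $c \in \C(M)$ and let $S \subseteq \O(M)$ with $c \leq \bigvee S$. Form $S' = S \cup \{\neg c\} \subseteq \O(M)$. Then
\[
1 = c \vee \neg c \leq \bigvee S \vee \neg c = \bigvee S'.
\]
Since $M$ is compact and $S' \subseteq \O(M)$, there is a finite $T' \subseteq S'$ with $1 = \bigvee T'$. Setting $T = T' \setminus \{\neg c\}$ gives a finite subset $T \subseteq S$ with $1 \leq \bigvee T \vee \neg c$. Meeting with $c$ and using $c \wedge \neg c = 0$ yields $c \leq \bigvee T$, showing $c \in \K(M)$.

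There is no real obstacle here: once one recognizes that $c = \diamond c$ makes $\neg c$ open in the MT-algebraic sense, the classical topological proof transcribes verbatim. An alternative but essentially equivalent route would be to use the finite-intersection reformulations in \cref{compactelement property} and \cref{compact property}: given $F \subseteq \C(M)$ with $(\bigwedge F) \wedge c = 0$, apply \cref{compact property} to $F \cup \{c\} \subseteq \C(M)$ and separate $c$ from the resulting finite subfamily. Either formulation yields the result directly from the definitions.
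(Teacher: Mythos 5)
Your proof is correct and is essentially identical to the paper's: both adjoin the open element $\neg c$ to the cover, apply compactness of $M$, and then meet with $c$ to discard $\neg c$. No issues.
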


\begin{proof}
Let $c \in \C(M)$ and $c \leq \bigvee S$ where $S \subseteq \O(M)$. Then 
$\neg c \vee \bigvee S = 1$. Since $M$ is compact, there is a finite $T \subseteq S$ such that $\neg c \vee \bigvee T = 1$. Therefore, $c \leq \bigvee T$, and hence $c\in \K(M)$.
\end{proof}

Putting \cref{K(M) inside C(M),C(M)closed} together yields the following, which generalizes a well-known result in topology to MT-algebras.

\begin{theorem}\label{C(M)=K(M)}
If $M$ is compact Hausdorff, then $\C(M)=\K(M)$.
\end{theorem}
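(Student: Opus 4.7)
The plan is essentially to invoke the two preceding lemmas, since they are tailored to give the two inclusions that together yield the equality. Assume $M$ is compact Hausdorff.

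First, since $M$ is Hausdorff, \cref{K(M) inside C(M)} gives the inclusion $\K(M) \subseteq \C(M)$. For the reverse inclusion, since $M$ is compact, \cref{C(M)closed} yields $\C(M) \subseteq \K(M)$. Combining the two inclusions proves $\C(M) = \K(M)$.

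There is no real obstacle here: the theorem is packaged as an immediate corollary of the two lemmas that were just established, paralleling the classical topological fact that in a compact Hausdorff space the closed subsets coincide with the compact subsets. The only thing worth remarking in the write-up is to explicitly point out which hypothesis (compactness vs.\ Hausdorffness) supplies which inclusion, so that the reader sees clearly that neither hypothesis alone suffices.
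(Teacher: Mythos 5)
Your proof is correct and matches the paper exactly: the theorem is obtained there by combining \cref{K(M) inside C(M)} (Hausdorffness gives $\K(M)\subseteq\C(M)$) with \cref{C(M)closed} (compactness gives $\C(M)\subseteq\K(M)$), just as you do.
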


It is well known (see, e.g., \cite[p.~150]{Engelking}) that every compact Hausdorff space is locally compact. We generalize this result to MT-algebras.

\begin{lemma}\label{compact implies LC}
If $M$ is a compact Hausdorff MT-algebra, then $M$ is locally compact.
\end{lemma}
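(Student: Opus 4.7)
My plan is to use the Hausdorff hypothesis (via the fact that $\GC(M)$ join-generates $M$) together with compactness to produce, for each open $u$ and each regular-closed-approximated element below $u$, an open $v$ satisfying $v \nest u$. The key ingredient is \cref{C(M)=K(M)}, which says $\C(M) = \K(M)$ in the compact Hausdorff setting; this is what lets $\nest$ become accessible from mere regularity-type data.

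Fix $u \in \O(M)$. Since $M$ is Hausdorff, $\GC(M)$ join-generates $M$, so it suffices to show that for every $a \in \GC(M)$ with $a \leq u$, there is some $v \in \O(M)$ with $a \leq v$ and $v \nest u$. Given such $a$, write
\[
a = \bigwedge \{\diamond\square c \mid a \leq \square c\}.
\]
I would first observe that the family $S = \{\diamond\square c \mid a \leq \square c\}$ is downward directed: if $a \leq \square c_1$ and $a \leq \square c_2$, then $a \leq \square c_1 \wedge \square c_2 = \square(c_1 \wedge c_2)$, and $\diamond\square(c_1 \wedge c_2) \leq \diamond\square c_1 \wedge \diamond\square c_2$. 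Since each element of $S$ is closed and $\neg u$ is closed, the family $S \cup \{\neg u\}$ consists of closed elements with
\[
\bigwedge(S \cup \{\neg u\}) \;=\; a \wedge \neg u \;=\; 0.
\]

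Now I would apply \cref{compact property}: some finite subfamily already has meet $0$. Using directedness of $S$, this collapses to the existence of a single $c^*$ with $a \leq \square c^*$ and $\diamond\square c^* \wedge \neg u = 0$, i.e.\ $\diamond\square c^* \leq u$. Set $v = \square c^*$ and $k = \diamond\square c^*$. Then $v \in \O(M)$, $k \in \C(M)$, and by \cref{C(M)=K(M)} we have $k \in \K(M)$. Since $a \leq v \leq k \leq u$, we conclude $v \nest u$ with $a \leq v$. Taking the supremum over all such $a \in \GC(M)$ with $a \leq u$ yields $u \leq \bigvee\{v \in \O(M) \mid v \nest u\}$, and the reverse inequality is trivial. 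Hence $M$ is locally compact.

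The only delicate step is the passage from the directed meet representation of $a \in \GC(M)$ to an actual finite (in fact single) regular closed witness separating $a$ from $\neg u$; this is exactly where compactness (in the closed-element formulation of \cref{compact property}) combined with the directedness of the $\diamond\square c$'s does the work. Once that witness is in hand, the identification $\C(M) = \K(M)$ from \cref{C(M)=K(M)} turns it directly into the compact element needed to witness $v \nest u$.
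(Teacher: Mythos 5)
Your proof is correct and follows essentially the same route as the paper's: join-generate the open element by elements of $\GC(M)$, use their representation as directed meets of regular closed elements together with the finite-intersection form of compactness (\cref{compact property}) to extract a single witness $\square c^* \le \diamond\square c^*\le u$, and invoke \cref{C(M)=K(M)} to see that $\diamond\square c^*$ is compact, hence $\square c^*\nest u$. The only difference is cosmetic — you spell out the directedness of $\{\diamond\square c \mid a \le \square c\}$, which the paper asserts without proof.
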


\begin{proof}
Let $a \in \O(M)$. Since $M$ is Hausdorff, $a = \bigvee \{ b \in \GC(M) \mid b \leq a \}$. For each $b \in \GC(M)$ we have 
$b = \bigwedge \{\diamond \square c \mid b \leq \square c \}$. Therefore, if $b\le a$, then $\bigwedge \{\diamond \square c \mid b \leq \square c \} \wedge \neg a =0$. Letting $S=\{\diamond \square c \mid b \leq \square c\}$, we have $\left(\bigwedge S\right)\wedge \neg a = 0$. Since $M$ is compact and $\lnot a \in \C(M)$, by \cref{compact property} there is a finite subset $T\subseteq S$ such that $\left(\bigwedge T\right) \wedge \lnot a = 0$, so $\bigwedge T \le a$. Because $S$ is directed, $\bigwedge T \in S$, and so $\bigwedge T = \diamond \square c$ for some $c$ with $b \le \square c$. Therefore, $b \le \square c \le \diamond\square c \le a$. 
By \cref{C(M)=K(M)}, $\diamond\square c \in \K(M)$, so $\square c \nest a$. Since $b \le \square c$, we conclude that $a = \bigvee \{u \in \O(M) \mid u \nest a\}$. Thus,
$M$ is locally compact.
\end{proof}

Let $\KHMT$ be the full subcategory of $\MT$ consisting of compact Hausdorff MT-algebras, $\KHaus$ the full subcategory of $\Top$ consisting of compact Hausdorff spaces, and $\KRegFrm$ the full subcategory of $\Frm$ consisting of compact regular frames. 

Since every MT-morphism sends closed elements to closed elements (see \cref{restriction to frame homomorphism}), it follows from \cref{C(M)=K(M)} that every MT-morphism between compact Hausdorff MT-algebras is proper. 
Thus, $\KHMT$ is a full subcategory of $\LCHMT$. For the same reason, $\KHaus$ is a full subcategory of $\LCHaus$. We also have that $\KRegFrm$ is a full subcategory of $\CRegFrm$ (see, e.g., \cite[p.~8]{banaschewskistably}). 




\begin{theorem}\label{KReg KMT2}
     The restriction $\O:\KHMT \to \KRegFrm$ is an equivalence.
\end{theorem}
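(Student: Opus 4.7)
The plan is to deduce this as a straightforward restriction of the equivalence $\O:\LCHMT \to \CRegFrm$ established in \cref{CReg LCMT2}, using that $\KHMT$ is a full subcategory of $\LCHMT$ (which is precisely what the paragraph preceding this theorem records) and that $\KRegFrm$ is a full subcategory of $\CRegFrm$. Once these inclusions are in place, fullness and faithfulness of the restriction come for free from \cref{CReg LCMT2}, so the work consists of checking (a) that $\O$ carries $\KHMT$ into $\KRegFrm$, and (b) essential surjectivity onto $\KRegFrm$.

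For (a), let $M \in \KHMT$. By \cref{compact implies LC}, $M$ is locally compact Hausdorff, so $\O(M) \in \CRegFrm$ by \cref{CReg LCMT2} (equivalently, $\O(M)$ is continuous by \cref{LC implies CF} and regular by \cref{regular algebra to frm} via \cref{LCMT2 implies regular}). The remaining ingredient is compactness of $\O(M)$, which follows from the observation recorded before \cref{StKMT and StKFRm}: an MT-algebra is compact iff its frame of opens is compact. Hence $\O(M) \in \KRegFrm$.

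For (b), let $L \in \KRegFrm$. Since $\KRegFrm \subseteq \CRegFrm$, \cref{CReg LCMT2} supplies $M \in \LCHMT$ with $\O(M) \cong L$. Because $L$ is compact as a frame, so is $\O(M)$, and again by the equivalence between compactness of $M$ and compactness of $\O(M)$ we conclude that $M$ is compact. Thus $M \in \KHMT$ with $\O(M) \cong L$, giving essential surjectivity.

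I do not anticipate a serious obstacle: the only point that requires any care is confirming that $\KHMT \hookrightarrow \LCHMT$ and $\KRegFrm \hookrightarrow \CRegFrm$ are \emph{full} subcategory inclusions, so that the restriction of a full and faithful functor remains full and faithful. Fullness of the first is guaranteed by the remark preceding the theorem (every MT-morphism between compact Hausdorff MT-algebras is automatically proper, because $\K(\cdot)=\C(\cdot)$ by \cref{C(M)=K(M)} and MT-morphisms preserve closed elements by \cref{restriction to frame homomorphism}); fullness of the second is the cited fact from \cite[p.~8]{banaschewskistably}. With these in hand, the restricted functor is full, faithful, and essentially surjective, hence an equivalence.
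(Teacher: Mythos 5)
Your proposal is correct and follows essentially the same route as the paper: the paper's proof is the one-line observation that the equivalence of \cref{CReg LCMT2} restricts because an MT-algebra $M$ is compact iff $\O(M)$ is compact, with the fullness of the inclusions $\KHMT\hookrightarrow\LCHMT$ and $\KRegFrm\hookrightarrow\CRegFrm$ recorded in the preceding paragraph exactly as you note. Your write-up merely makes explicit the well-definedness and essential-surjectivity checks that the paper leaves implicit.
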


\begin{proof}
This follows from \cref{CReg LCMT2} since an MT-algebra $M$ is compact iff $\O(M)$ is compact. 
\end{proof}

\begin{lemma}\label{KHaus and KHMT}
Let $M \in \LCHMT$. Then $M \in \KHMT \iff \at(M)\in \KHaus$.
\end{lemma}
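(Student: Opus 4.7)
The plan is to reduce this equivalence to the already established lemmas, exploiting the fact that the hypothesis $M \in \LCHMT$ gives us almost everything for free: it already provides that $M$ is sober, Hausdorff, locally compact, and (by \cref{LCHMT implies spatial}) spatial. Therefore the only content that can vary between the two sides is compactness, and this is exactly what \cref{compactness of atoms} controls.

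For the forward direction, I would assume $M \in \KHMT$. Then $M$ is compact, and since $M \in \LCHMT \subseteq \LCSobMT$ by hypothesis, \cref{compactness of atoms} applies and yields that $\at(M)$ is a compact space. The Hausdorff property of $\at(M)$ is immediate from \cref{Hausdorff algebra to space} applied to the Hausdorff MT-algebra $M$. Combining these, $\at(M) \in \KHaus$.

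For the reverse direction, I would assume $\at(M) \in \KHaus$. Again \cref{compactness of atoms} is the key tool: since $M \in \LCSobMT$ and $\at(M)$ is compact, $M$ itself is compact. The Hausdorff hypothesis on $M$ is already given by $M \in \LCHMT$, so $M \in \KHMT$.

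There is no real obstacle; the statement is essentially a corollary of \cref{compactness of atoms} together with the Hausdorff transfer results of \cref{Hausdorff algebra to space,Hausdorff space to algebra}, all packaged inside the hypothesis $M \in \LCHMT$. If anything, the only thing worth verifying carefully is that the hypothesis $M \in \LCHMT$ (rather than merely $M \in \MT$ or $M \in \SMT$) is needed so that \cref{compactness of atoms} can be invoked, which requires $M \in \LCSobMT$.
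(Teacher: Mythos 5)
Your proof is correct and follows essentially the same route as the paper, with both directions resting on \cref{compactness of atoms} for the transfer of compactness and \cref{Hausdorff algebra to space} for the Hausdorff property of $\at(M)$. The only (harmless) difference is in the converse direction: you note that Hausdorffness of $M$ is already part of the hypothesis $M\in\LCHMT$, whereas the paper redundantly re-derives it from $\at(M)$ being Hausdorff via spatiality and \cref{Hausdorff space to algebra}.
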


\begin{proof}
Let $M \in \KHMT$. Then $M$ is a Hausdorff algebra, so $\at(M)$ is a Hausdorff space by \cref{Hausdorff algebra to space}. Also, $\at(M)$ is compact by \cref{compactness of atoms}.
  Conversely, let $\at(M)\in \KHaus$. Since $M \in \LCHMT$, we have that $M$ is spatial by \cref{LCHMT implies spatial}. Therefore, $\at(M)$ being a Hausdorff space implies that $M$ is a Hausdorff algebra by \cref{Hausdorff space to algebra}. Moreover, using \cref{compactness of atoms} again yields that $M$ is compact.
\end{proof}

As an immediate consequence of \cref{KHaus and KHMT}, we obtain:

\begin{theorem}\label{dual equivalence of compact hausdorff spaces and MT}
    The dual equivalence between $\LCHMT$ and $\LCHaus$ restricts to a dual equivalence between $\KHMT$ and $\KHaus$.
\end{theorem}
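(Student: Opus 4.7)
The plan is to observe that this theorem is essentially a bookkeeping corollary of \cref{dual equivalence of compact hausdorff spaces and MT}'s predecessor, together with the equivalence criterion in \cref{KHaus and KHMT}. Concretely, we already have the dual equivalence $\at : \LCHMT \to \LCHaus$ and $\P : \LCHaus \to \LCHMT$ with the unit $\eta$ and counit $\varepsilon$ as established in \cref{dual equivalence of locally compact hausdorff spaces and MT}. Since $\KHMT$ is a full subcategory of $\LCHMT$ (noted in the paragraph following \cref{compact implies LC}) and $\KHaus$ is a full subcategory of $\LCHaus$, the only thing to check is that both functors restrict well to objects; morphisms are then handled automatically.

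First, I would verify that $\at$ restricts. Let $M \in \KHMT$. Since $\KHMT \subseteq \LCHMT$ (by \cref{compact implies LC} together with the Hausdorff hypothesis), $\at(M)$ lives in $\LCHaus$. Applying \cref{KHaus and KHMT} in the forward direction gives $\at(M) \in \KHaus$. Second, to see that $\P$ restricts, let $X \in \KHaus \subseteq \LCHaus$. Then $\P(X) \in \LCHMT$ by \cref{dual equivalence of locally compact hausdorff spaces and MT}, and the counit $\varepsilon : X \to \at(\P(X))$ is a homeomorphism (see \cref{MT adjunction}), so $\at(\P(X)) \in \KHaus$. Applying the reverse direction of \cref{KHaus and KHMT} to $\P(X) \in \LCHMT$ yields $\P(X) \in \KHMT$.

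Finally, the unit $\eta$ and counit $\varepsilon$ from \cref{dual equivalence of locally compact hausdorff spaces and MT} are natural isomorphisms on $\LCHMT$ and $\LCHaus$, so their components over the subcategories $\KHMT$ and $\KHaus$ remain natural isomorphisms. There is no real obstacle here; the only conceptual content has already been packaged into \cref{KHaus and KHMT}, whose proof in turn relied on the nontrivial fact \cref{compactness of atoms} relating compactness of $M$ to compactness of $\at(M)$. Thus the proof is a short invocation of these results.
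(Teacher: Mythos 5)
Your proposal is correct and takes essentially the same route as the paper, which derives the theorem as an immediate consequence of \cref{KHaus and KHMT}; you have simply made explicit the standard bookkeeping (restriction of $\at$ via the forward direction, restriction of $\P$ via the homeomorphism $\varepsilon$ plus the reverse direction, and fullness of the subcategories).
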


\begin{corollary}[Isbell duality]
$\KRegFrm$ is dually equivalent to $\KHaus$.
\end{corollary}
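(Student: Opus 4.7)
The plan is to derive Isbell duality as an immediate consequence of the two preceding theorems, by composing an equivalence with a dual equivalence. First I would invoke \cref{KReg KMT2}, which gives that the functor $\O:\KHMT\to\KRegFrm$ is an equivalence of categories. Then I would invoke \cref{dual equivalence of compact hausdorff spaces and MT}, which yields a dual equivalence between $\KHMT$ and $\KHaus$ implemented by the restrictions of $\at$ and $\P$.

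The remaining step is purely formal: composing an equivalence $\KHMT\simeq\KRegFrm$ with a dual equivalence $\KHMT\simeq\KHaus^{\mathrm{op}}$ produces a dual equivalence $\KRegFrm\simeq\KHaus^{\mathrm{op}}$. Concretely, one may take the composite functors $\Omega=\O\circ\P:\KHaus\to\KRegFrm$ and $\pt:\KRegFrm\to\KHaus$ (which agrees up to natural isomorphism with $\at$ followed by the equivalence $\O$), and note that the naturality of $\zeta$ and $\varepsilon$ already established in the paper assembles into the required unit and co-unit.

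I do not anticipate a real obstacle: the substantive work (establishing that $\O$ is essentially surjective onto $\KRegFrm$ with compact Hausdorff pre-images, and that the dual adjunction $(\P,\at)$ restricts to $\KHMT\leftrightarrows\KHaus$) has already been carried out in \cref{KReg KMT2,dual equivalence of compact hausdorff spaces and MT}. The only thing to be careful about is verifying that the triangle of functors $\O,\Omega,\pt,\at,\P$ commutes up to natural isomorphism in this restricted setting, but this follows from the analogous commuting diagram at the level of $\LCHMT$, $\LCHaus$, and $\CRegFrm$ displayed just above, by restriction to the compact subcategories.

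\begin{proof}
Apply \cref{KReg KMT2,dual equivalence of compact hausdorff spaces and MT}.
\end{proof}
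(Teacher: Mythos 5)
Your proof is correct and coincides with the paper's own argument: the corollary follows by composing the equivalence $\O:\KHMT\to\KRegFrm$ from \cref{KReg KMT2} with the dual equivalence between $\KHMT$ and $\KHaus$ from \cref{dual equivalence of compact hausdorff spaces and MT}. The additional remarks on assembling the unit and co-unit are fine but not needed beyond the formal composition.
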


\begin{proof}
Apply \cref{KReg KMT2,dual equivalence of compact hausdorff spaces and MT}.
\end{proof}

We thus arrive at the following diagram that commutes up to natural isomorphism: 

\begin{center}
      \begin{tikzcd}[row sep=2em]
      \KHMT \arrow[->]{rr}{\O}
      \ar[shift right=.5ex,"\at"',  shorten <= 7pt]{dr}
      & & \KRegFrm 
      \ar[dl, "pt", shift left=.5ex, shorten <= 7pt]
      \\
      &\KHaus 
      \ar[shift right=.5ex,"\P"', shorten >= 7pt]{ul} 
      \arrow[ur, "\Omega", shift left=.5ex,  shorten >= 7pt] &
      \end{tikzcd}
\end{center}

\section{Zero-dimensional MT-algebras}
\label{sec: 7}

In this final section we further restrict our series of equivalences and dual equivalences to the zero-dimensional setting. This, in particular, yields an alternative view of Stone duality.

For a topological space $X$, let $\CLP(X)$ be the collection of clopen (closed and open)  subsets of $X$. We recall 
(see, e.g., \cite[p.~360]{Engelking}) 
that 
$X$ is {\em zero-dimensional} if it is $T_1$ and $\CLP(X)$ is a basis for $X$.
Clearly every zero-dimensional space is Hausdorff.

The following definition is a direct generalization of the above to MT-algebras. 

\begin{definition}\label{ZeroDim}
Let $M \in \MT$. 
\begin{enumerate}[ref=\thedefinition(\arabic*)]
\item An element $a \in M$ is {\em clopen} if $a$ is both closed and open. Let $\CL(M)$ be the collection of clopen elements of $M$.
\item We call $M$ {\em zero-dimensional} if $M$ is a $T_1$-algebra and for every $a \in \O(M)$ we have
\begin{equation}
a = \bigvee \{b \in \CL(M) \mid b \leq a\} \label{zero} \tag{ZDim} 
\end{equation} \label [definition]{zerdimensional}
\end{enumerate}
\end{definition}

\begin{remark}\label{CZD}
Equivalently, a $T_1$-algebra $M$ is zero-dimensional provided for every $c \in \C(M)$ we have 
\[
c= \bigwedge \{d \in \CL(M) \mid c \leq d\}.
\]
\end{remark}

\begin{lemma}\label{restrictiontoT1/2}
An MT-algebra $M$ is zero-dimensional iff it is a $T_{1/2}$-algebra and every $a \in \O(M)$ satisfies {\em (\ref{zero})}.
\end{lemma}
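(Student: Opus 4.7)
The plan is to show the forward implication trivially and then to argue that, in the presence of (\ref{zero}), the $T_{1/2}$-hypothesis actually forces the stronger $T_1$-condition, so the two characterizations coincide.

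For the forward direction, if $M$ is zero-dimensional then $M$ is $T_1$ by definition, and hence $T_{1/2}$ by \cref{T1 implies T1/2}. The condition (\ref{zero}) is part of the definition, so this direction is immediate.

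For the reverse direction, assume $M$ is a $T_{1/2}$-algebra in which every $a \in \O(M)$ satisfies (\ref{zero}). To conclude that $M$ is zero-dimensional it suffices, in view of \cref{zerdimensional}, to establish that $M$ is a $T_1$-algebra, that is, $\C(M)$ join-generates $M$. Since $M$ is $T_{1/2}$, the set $\LC(M)$ join-generates $M$, so it is enough to show that every locally closed element is a join of closed elements. Let $\ell = u \wedge c$ with $u \in \O(M)$ and $c \in \C(M)$. Applying (\ref{zero}) to $u$ yields $u = \bigvee\{b \in \CL(M) \mid b \leq u\}$. Since $M$ is a complete boolean algebra, arbitrary joins distribute over finite meets, and therefore
\[
\ell = u \wedge c = \bigvee\{b \wedge c \mid b \in \CL(M),\ b \leq u\}.
\]
Each clopen element is in particular closed, and $\C(M)$ is closed under finite meets, so $b \wedge c \in \C(M)$ for every such $b$. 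Thus $\ell$ is a join of closed elements, and consequently every element of $M$ is a join of closed elements, proving that $M$ is $T_1$ and hence zero-dimensional.

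I do not anticipate any serious obstacle here; the only thing to be careful about is the invocation of the infinite distributive law, which is available because $M$ is a complete boolean algebra (it also follows from the frame law applied in $\O(M)$ when intersecting with the clopen $b$, but in our setting the boolean distributivity is the most direct justification).
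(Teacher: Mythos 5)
Your proof is correct and follows essentially the same route as the paper's: both reduce the claim to showing that $T_{1/2}$ together with (\ref{zero}) implies $T_1$, by applying (\ref{zero}) to the open part of a locally closed element $u\wedge c$ and meeting the clopens below $u$ with the closed part $c$. The only cosmetic difference is that you verify the join-generation form of $T_1$ directly via the join-infinite distributive law of the complete boolean algebra, whereas the paper verifies the equivalent ``every nonzero element has a nonzero closed element below it'' characterization and so never needs to invoke distributivity explicitly.
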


\begin{proof}
Since every $T_1$-algebra is a $T_{1/2}$-algebra (see \cref{T1 implies T1/2}), it is sufficient to prove the right-to-left implication. Suppose that $M$ is a $T_{1/2}$-algebra and every $a \in \O(M)$ satisfies (\ref{zero}). It is enough to show that $M$ is $T_1$. Let $0\neq a \in M$. Since $M$ is a $T_{1/2}$-algebra, there exist $u \in \O(M)$ and $f \in \C(M)$ such that $0\neq u \wedge f 
\leq a$. The condition $u \wedge f \neq 0$ implies $u \nleq \neg f$. By (\ref{zero}), there is $c \in \CL(M)$ such that $c \leq u$ and $c \nleq \neg f$. Therefore, $0 \neq c \wedge f$,  $c \wedge f \in \C(M)$, and $c \wedge f \leq a$. Thus, $M$ is a $T_1$-algebra. 
 \end{proof}

\begin{remark}
Whether $M$ being a $T_{1/2}$-algebra can further be weakened to being a $T_0$-algebra remains open. Note that this is the case for spatial MT-algebras since every $T_0$-space satisfying (\ref{zero}) is a $T_1$-space.
\end{remark}

We recall 
that an element $c$ of a frame $L$ is {\em complemented} if $c\vee c^*=1$, where 
\[
c^* := \bigvee \{x \in L \mid c \wedge x=0\}
\] 
is the pseudocomplement of $c$. Let $\cmp(L)$ be the collection of complemented elements of $L$. 
Then $L$ is {\em zero-dimensional} 
if $a=\bigvee \{c \in \cmp(L) \mid c \leq a \}$ for each $a\in L$ (see, e.g., \cite[p.~258]{banaschewski1989universal}). 
 
\begin{lemma}\label{clopens are complemented}
Let $M \in \MT$. Then $a \in \CL(M)$ iff $a \in \cmp(\O(M))$.
\end{lemma}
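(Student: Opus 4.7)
The plan is to observe that finite meets and joins of open elements, computed in the ambient boolean algebra $M$, are themselves open, and therefore coincide with the corresponding operations in the frame $\O(M)$. Once this is in place, complementation in $\O(M)$ reduces to boolean complementation in $M$, restricted to opens.

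For the forward direction, suppose $a \in \CL(M)$. Then $a \in \O(M)$, and since $a$ is closed we have $\neg a \in \O(M)$. Because $a \vee \neg a = 1$ and $a \wedge \neg a = 0$ in $M$ (a complete boolean algebra), and these finite operations agree with those in $\O(M)$, the element $\neg a$ is a complement of $a$ in the frame $\O(M)$. In particular, $\neg a \le a^{*}$, so $a \vee a^{*} = 1$ and $a \in \cmp(\O(M))$.

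For the backward direction, suppose $a \in \cmp(\O(M))$, so $a \vee a^{*} = 1$ in $\O(M)$. Since $a \wedge a^{*} = 0$ always, and both joins and meets of open elements agree with those in $M$, the element $a^{*}$ is a boolean complement of $a$ in $M$. By uniqueness of boolean complements, $a^{*} = \neg a$, so $\neg a \in \O(M)$. Hence $a$ is closed, and combined with $a \in \O(M)$ we obtain $a \in \CL(M)$.

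The only step that requires a small verification is that finite joins and meets of opens remain open: for $a,b \in \O(M)$ we have $\square(a\wedge b)=\square a \wedge \square b = a \wedge b$ and $\square(a\vee b) \ge \square a \vee \square b = a \vee b$, while $\square(a\vee b) \le a\vee b$ by the Kuratowski axioms, so $a\vee b$ is open. I do not anticipate any real obstacle here; the content of the lemma is essentially that ``clopen'' in the MT-sense and ``complemented'' in the frame $\O(M)$ are two names for the same thing.
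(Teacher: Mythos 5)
Your proof is correct and follows essentially the same route as the paper: in the forward direction you note $\neg a\in\O(M)$ witnesses complementation (hence $\neg a\le a^*$ and $a\vee a^*=1$), and in the backward direction you use uniqueness of boolean complements in $M$ to conclude $a^*=\neg a\in\O(M)$, so $a$ is closed. The extra verification that finite meets and joins of opens are open is harmless and standard.
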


\begin{proof}
First suppose $a\in \CL(M)$. Then $a, \neg a \in \O(M)$ and 
$a \vee \neg a =1$. Therefore, $a \in \cmp(\O(M))$. Conversely, suppose $a \in \cmp(\O(M))$. Then $a \wedge a^*=0$ and $a\vee a^*=1$.
Thus, $\neg a= a^*$, 
so $\neg a \in \O(M)$, and hence $a \in \CL(M)$. 
\end{proof}

As an immediate consequence of \cref{clopens are complemented}, we obtain:

\begin{proposition}\label{zero-dimensional}
Let $M$ be a $T_1$-algebra.
Then $M$ is a zero-dimensional MT-algebra iff $\O(M)$ is a zero-dimensional frame.
\end{proposition}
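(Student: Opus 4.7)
The plan is to observe that the previous lemma already does essentially all the work. Under the assumption that $M$ is a $T_1$-algebra, the defining condition \eqref{zero} for $M$ to be zero-dimensional is a statement about joins of clopen elements of $M$ below a given open $a$, while the defining condition for $\O(M)$ to be a zero-dimensional frame is a statement about joins of complemented elements of $\O(M)$ below $a$. Since \cref{clopens are complemented} identifies these two sets of elements, the two conditions translate into one another.

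More concretely, first I would fix $a \in \O(M)$ and apply \cref{clopens are complemented} to get the set equality
\[
\{b \in \CL(M) \mid b \leq a\} = \{b \in \cmp(\O(M)) \mid b \leq a\}.
\]
Then the forward direction follows immediately: if $M$ is zero-dimensional, each $a \in \O(M)$ satisfies \eqref{zero}, and by the set equality this is exactly the zero-dimensionality condition for $\O(M)$ as a frame. For the converse, if $\O(M)$ is a zero-dimensional frame, each $a \in \O(M)$ is a join of complemented elements below it, hence by the set equality a join of clopen elements of $M$ below it, so \eqref{zero} holds; combined with the hypothesis that $M$ is $T_1$, this gives that $M$ is a zero-dimensional MT-algebra.

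There is essentially no obstacle here, since the heavy lifting is already encoded in \cref{clopens are complemented} and the $T_1$ hypothesis is carried along unchanged on both sides of the biconditional. The only thing one needs to be a little careful about is that the frame-theoretic definition of zero-dimensionality is required for \emph{every} element of $\O(M)$, while the MT-algebra definition is also stated for every open element, so the two conditions really are in exact correspondence and no additional argument about elements outside $\O(M)$ is needed.
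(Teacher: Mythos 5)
Your argument is correct and is exactly the paper's route: the paper derives this proposition as an immediate consequence of \cref{clopens are complemented}, precisely by matching the set of clopen elements of $M$ below a given open $a$ with the set of complemented elements of $\O(M)$ below $a$ (joins of opens agree in $M$ and in $\O(M)$, so no further care is needed). Nothing to add.
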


Let $\ZDMT$ be the full subcategory of $\MT$ consisting of zero-dimensional MT-algebras, and let $\ZDFrm$ be the full subcategory of $\Frm$ consisting of zero-dimensional frames.

\begin{lemma}
The restriction $\O:\ZDMT \to \ZDFrm$ is well defined and essentially surjective.
\end{lemma}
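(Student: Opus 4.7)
The plan splits into the two claims. For well-definedness, if $M \in \ZDMT$ then $M$ is a $T_1$-algebra by \cref{zerdimensional}, so \cref{zero-dimensional} applies and yields $\O(M) \in \ZDFrm$. Nothing more is needed here.

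For essential surjectivity, I would start from a given $L \in \ZDFrm$ and take the canonical MT-algebra realization of $L$, namely $M := \overline{B(L)}$. The construction described just before \cref{thm: essentially surj} provides a frame isomorphism $\O(M) \cong L$, so the only task is to verify $M \in \ZDMT$. I would not try to apply \cref{zero-dimensional} directly, because that would require knowing $M$ is $T_1$ a priori; instead I would route the argument through \cref{restrictiontoT1/2}, which only demands $T_{1/2}$ plus the condition (ZDim) on opens.

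To verify $T_{1/2}$, note $M = \overline{B(L)} \cong \overline{B(\O(M))}$, so \cref{T_1/2} gives that $M$ is $T_{1/2}$. To verify (ZDim), pick $a \in \O(M)$ and transport it to $L$ via the isomorphism $\O(M) \cong L$. Since $L$ is a zero-dimensional frame,
\[
a = \bigvee \{c \in \cmp(L) \mid c \leq a\}.
\]
By \cref{clopens are complemented}, the complemented elements of $\O(M)$ are exactly the clopen elements of $M$, so this join rewrites as $a = \bigvee\{c \in \CL(M) \mid c \leq a\}$. Then \cref{restrictiontoT1/2} concludes that $M$ is zero-dimensional, which in particular upgrades $T_{1/2}$ to $T_1$ for free. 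This gives $M \in \ZDMT$ with $\O(M) \cong L$.

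I do not anticipate a real obstacle: the result is essentially an assembly of \cref{thm: essentially surj,T_1/2,restrictiontoT1/2,clopens are complemented}. The only mild subtlety is choosing \cref{restrictiontoT1/2} over \cref{zero-dimensional} so that one never has to verify the $T_1$ separation axiom for $\overline{B(L)}$ by hand; it is forced by the zero-dimensionality of $L$ together with the $T_{1/2}$ status of $\overline{B(L)}$.
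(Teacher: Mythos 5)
Your proof is correct and follows essentially the same route as the paper: well-definedness via \cref{zero-dimensional}, and essential surjectivity by realizing $L$ as $\O\bigl(\overline{B(L)}\bigr)$, checking (\ref{zero}) via \cref{clopens are complemented}, obtaining $T_{1/2}$ from \cref{T_1/2}, and invoking \cref{restrictiontoT1/2} to upgrade to $T_1$. The subtlety you flag—avoiding a direct verification of $T_1$ for $\overline{B(L)}$—is exactly how the paper handles it.
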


\begin{proof}
That the restriction $\O:\ZDMT \to \ZDFrm$ is well defined follows from \cref{zero-dimensional}. To see that it is essentially surjective, let $L \in \ZDFrm$. Then $\O\left(\overline{B(L)}\right)=L$ by \cref{thm: essentially surj}. Therefore, $\overline{B(L)}$ satisfies (\ref{zero}) by \cref{clopens are complemented} and $\overline{B(L)}$ is a $T_{1/2}$-algebra by \cref{T_1/2}.
Therefore, by \cref {restrictiontoT1/2}, we conclude that $\overline{B(L)}$ is a $T_1$-algebra, and hence $\overline{B(L)} \in \ZDMT$. Thus, $\O:\ZDMT \to \ZDFrm$ is  essentially surjective.
\end{proof}

We recall (see, e.g., \cite[p.~8]{bezhanishvili2023structure}) that a space $X$ is \emph{locally Stone} if it is zero-dimensional, locally compact, and Hausdorff, and that a frame $L$ is \emph{locally Stone} if it is continuous and zero-dimensional. 
Let $\LStone$ be the full subcategory of $\LCHaus$ consisting of zero-dimensional spaces. Let also $\LStoneFrm$ be the full subcategory of $\CRegFrm$ consisting of locally Stone frames.

\begin{definition}
An MT-algebra $M$ is \emph{locally Stone} if it is zero-dimensional, locally compact, and Hausdorff. Let $\LStoneMT$ be the full subcategory of $\LCHMT$ consisting of locally Stone algebras.
\end{definition}

\begin{theorem}\label{LStoneMT to LStoneFrm}
The restriction $\O:\LStoneMT \to \LStoneFrm$ is an equivalence.
\end{theorem}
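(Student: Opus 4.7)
The plan is to bootstrap from the already-established equivalence $\O:\LCHMT \to \CRegFrm$ of \cref{CReg LCMT2}. Since $\LStoneMT$ is a full subcategory of $\LCHMT$ and $\LStoneFrm$ is a full subcategory of $\CRegFrm$, once we verify that $\O$ restricts along these inclusions and that the restriction is essentially surjective, fullness and faithfulness come for free from the ambient equivalence. Thus the entire proof reduces to the following biconditional: for each $M\in\LCHMT$, we have $M\in\LStoneMT$ iff $\O(M)\in\LStoneFrm$.

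For the forward direction, assume $M\in\LStoneMT$. Then $M$ is locally compact, so $\O(M)$ is a continuous frame by \cref{LC implies CF}. Moreover, $M$ is zero-dimensional, hence a $T_1$-algebra by \cref{zerdimensional}, so \cref{zero-dimensional} yields that $\O(M)$ is a zero-dimensional frame. Combining these, $\O(M)$ is continuous and zero-dimensional, hence $\O(M)\in\LStoneFrm$.

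For the backward direction, suppose $M\in\LCHMT$ and $\O(M)\in\LStoneFrm$. Then $M$ is Hausdorff, and hence a $T_1$-algebra by \cref{T2 implies T1}. Since $\O(M)$ is a zero-dimensional frame, \cref{zero-dimensional} again gives that $M$ is a zero-dimensional MT-algebra. As $M$ is already locally compact and Hausdorff, we conclude $M\in\LStoneMT$.

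Given the biconditional, essential surjectivity of the restricted functor follows: for any $L\in\LStoneFrm\subseteq\CRegFrm$, the equivalence of \cref{CReg LCMT2} supplies some $M\in\LCHMT$ with $\O(M)\cong L\in\LStoneFrm$, and the biconditional places $M$ in $\LStoneMT$. There is no serious obstacle here since the heavy lifting was done in \cref{CReg LCMT2} and \cref{zero-dimensional}; the only point requiring any care is the invocation of \cref{zero-dimensional}, which needs the $T_1$ hypothesis on $M$, and this is precisely what Hausdorffness supplies via \cref{T2 implies T1}.
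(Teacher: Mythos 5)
Your proposal is correct and follows essentially the same route as the paper: the paper likewise reduces the theorem to the biconditional $M\in\LStoneMT \iff \O(M)\in\LStoneFrm$ for $M\in\LCHMT$ (via \cref{equivalence of LCHMT and CRegFrm,zero-dimensional}) and then invokes the ambient equivalence of \cref{CReg LCMT2}. You merely spell out the biconditional and the role of the $T_1$ hypothesis in more detail than the paper does.
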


\begin{proof}
Let $M \in \LCHMT$. By \cref{equivalence of LCHMT and CRegFrm,zero-dimensional}, $M \in \LStoneMT$ iff $\O(M) \in \LStoneFrm$. It remains to apply \cref{CReg LCMT2}.
\end{proof}

\begin{lemma}\label{clopens in M and at(M)}
Let $M \in \SMT$. Then $a \in \CL(M) \iff \eta(a) \in \CLP(\at(M))$.
\end{lemma}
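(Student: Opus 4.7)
The plan is to use two facts: first, that the topology on $\at(M)$ is by definition $\tau = \eta[\O(M)]$, and second, that since $M$ is spatial, $\eta : M \to \P(\at(M))$ is a complete boolean isomorphism (see \cref{MT adjunction}). Both directions of the biconditional will then follow by chasing complements through $\eta$.

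For the forward direction, assume $a \in \CL(M)$, so $a \in \O(M)$ and $\neg a \in \O(M)$. Then $\eta(a) \in \eta[\O(M)] = \tau$, so $\eta(a)$ is open in $\at(M)$. Because $\eta$ is a boolean homomorphism, $\eta(\neg a) = \at(M) \setminus \eta(a)$, and since $\neg a \in \O(M)$ we get that $\at(M) \setminus \eta(a) \in \tau$; thus $\eta(a)$ is also closed, hence clopen.

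For the reverse direction, assume $\eta(a) \in \CLP(\at(M))$. Since $\eta(a)$ is open, there exists $u \in \O(M)$ with $\eta(u) = \eta(a)$, and since $\eta$ is injective (by spatiality), $a = u \in \O(M)$. Likewise, since $\eta(a)$ is closed, its complement $\eta(\neg a) = \at(M) \setminus \eta(a)$ is open, so there is $v \in \O(M)$ with $\eta(v) = \eta(\neg a)$, and injectivity gives $\neg a = v \in \O(M)$, i.e.\ $a \in \C(M)$. Hence $a \in \CL(M)$.

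There is no real obstacle here; the only subtle point is the appeal to injectivity of $\eta$, which is exactly where the hypothesis $M \in \SMT$ is used (without spatiality, $\eta$ would still be an onto MT-morphism but might collapse distinct elements to the same subset of $\at(M)$).
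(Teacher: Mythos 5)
Your proof is correct and follows essentially the same route as the paper: the forward direction is immediate from the definition of the topology $\tau=\eta[\O(M)]$, and the reverse direction pulls the open set and the complement of the closed set back through $\eta$ and uses injectivity of $\eta$ (i.e.\ spatiality of $M$) to conclude $a\in\O(M)\cap\C(M)$. The paper phrases the converse slightly more compactly by producing $u\in\O(M)$ and $f\in\C(M)$ with $\eta(a)=\eta(u)=\eta(f)$, but the argument is the same.
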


\begin{proof}
The left-to-right implication follows from the definition of the topology on $\at(M)$. 
For the converse implication, let $\eta(a)\in \CLP(\at(M))$. Then there exist $u \in \O(M)$ and $f \in \C(M)$ such that $\eta(a)=\eta(u) =\eta(f)$. But $\eta$ is an isomorphism because $M \in \SMT$, so $a = u = f$, and hence $a \in \CL(M)$. 
\end{proof}

\begin{lemma} \label{lem: LStone}
Let $M \in \LCHMT$. Then $M \in \LStoneMT$ iff $\at(M)\in \LStone$.
\end{lemma}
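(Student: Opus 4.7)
The plan is to reduce the claim to a correspondence of zero-dimensionality. Since $M \in \LCHMT$ is given on both sides, we already know by \cref{LCHMT implies spatial} that $M \in \SMT$, so $\eta: M \to \P(\at(M))$ is an MT-isomorphism and its restriction $\eta|_{\O(M)}:\O(M)\to\Omega(\at(M))$ is a frame isomorphism. By \cref{LCHaus and LCHMT}, $\at(M) \in \LCHaus$, so $\at(M)$ is automatically locally compact and Hausdorff. Similarly, $M$ is already locally compact and Hausdorff. Thus the equivalence reduces to: $M$ is zero-dimensional iff $\at(M)$ is zero-dimensional. The key bridge is \cref{clopens in M and at(M)}, which says $\eta$ restricts to a bijection between $\CL(M)$ and $\CLP(\at(M))$.

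For the forward direction, assume $M$ is zero-dimensional. Since $M$ is Hausdorff, $\at(M)$ is Hausdorff by \cref{Hausdorff algebra to space}, hence $T_1$. To show $\CLP(\at(M))$ is a basis, I would take $U \in \Omega(\at(M))$ and $x \in U$. Using the frame isomorphism $\eta|_{\O(M)}$, I get $a \in \O(M)$ with $\eta(a) = U$ and $x \le a$. By zero-dimensionality of $M$, $a = \bigvee\{b \in \CL(M) \mid b \le a\}$, and since $x$ is an atom, $x \le b$ for some such $b$. Then $\eta(b) \in \CLP(\at(M))$ and $x \in \eta(b) \subseteq U$.

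For the reverse direction, assume $\at(M) \in \LStone$. Then $M$ is $T_1$ (via \cref{T2 implies T1}, since $M$ is Hausdorff, but one may also argue directly from $\at(M)$ being $T_1$ and spatiality). To verify the zero-dimensionality condition, I would fix $a \in \O(M)$ and translate to $\eta(a) \in \Omega(\at(M))$. Since $\at(M)$ is zero-dimensional, $\eta(a) = \bigcup\{V \in \CLP(\at(M)) \mid V \subseteq \eta(a)\}$. Using \cref{clopens in M and at(M)} and the fact that $\eta$ is an isomorphism, each such $V$ equals $\eta(b)$ for a unique $b \in \CL(M)$ with $b \le a$, and joins are preserved. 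Therefore $\eta(a) = \eta\!\left(\bigvee\{b \in \CL(M) \mid b \le a\}\right)$, and applying the inverse isomorphism gives the required identity.

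There is essentially no hard step here: all the heavy lifting (spatiality, the clopen correspondence, and the equivalence $\LCHMT \simeq \LCHaus$ on atoms) has been done in prior lemmas. The only thing to be careful about is the bookkeeping that $\eta$ is an \emph{isomorphism} on $\O(M)$ (not merely a surjection), which is guaranteed by \cref{eta injective on opens} together with spatiality of $\O(M)$ via \cref{LC implies CF,continuous}, so that translating arbitrary joins between $M$ and $\P(\at(M))$ is valid.
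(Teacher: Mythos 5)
Your proposal is correct and follows essentially the same route as the paper: spatiality of $M$ via \cref{LCHMT implies spatial}, the isomorphism $\eta$, the clopen correspondence of \cref{clopens in M and at(M)} to transfer the zero-dimensionality condition, and \cref{LCHaus and LCHMT} to handle the locally compact Hausdorff part. The paper merely packages the two directions you argue separately into a single chain of equivalences.
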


\begin{proof}
Since $M \in \LCHMT$, $M$ is spatial by \cref{LCHMT implies spatial}, and so $\eta$ is an isomorphism by \cref{MT adjunction}.
Therefore, for each $a \in \O(M)$, we have 
\begin{align*}
 &\eta(a)= \bigcup \{\eta(b)\in \CLP(\at(M)) \mid \eta(b)\subseteq\eta(a) \} \\
\iff &\eta(a)=\eta\left(\bigvee \{b \in \CL(M) \mid \eta(b)\subseteq\eta(a)\}\right) && \text{by \cref{clopens in M and at(M)}}\\
\iff &a= \bigvee \{b \in \CL(M) \mid b\le a\}&& \text{$\eta$ is an isomoprhism}.
\end{align*} 
Thus, $M$ is a zero-dimensional MT-algebra iff $\at(M)$ is a zero-dimensional space. This together with \cref{LCHaus and LCHMT} yields that $M \in \LStoneMT$ iff $\at(M)\in \LStone$.
\end{proof}

As an immediate consequence of \cref{lem: LStone}, we obtain:

\begin{theorem}\label{LStoneMT to LStone}
The dual equivalence between $\LCHMT$ and $\LCHaus$ restricts to a dual equivalence between $\LStoneMT$ and $\LStone$.
\end{theorem}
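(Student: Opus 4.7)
The plan is to invoke the preceding \cref{lem: LStone} directly, since it has already characterized $\LStoneMT$-membership via atoms. The dual equivalence $\at \dashv \P$ between $\LCHMT$ and $\LCHaus$ is already established in \cref{dual equivalence of locally compact hausdorff spaces and MT}, and both $\LStoneMT \subseteq \LCHMT$ and $\LStone \subseteq \LCHaus$ are defined as full subcategories. Therefore, to show the restriction, it suffices to verify only that the functors $\at$ and $\P$ preserve the respective subcategory memberships on objects; morphisms take care of themselves by fullness, and the unit $\eta$ and co-unit $\varepsilon$ restrict automatically since they are already natural isomorphisms between objects that happen to land inside the smaller subcategories.

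For the object-level verification I would proceed in two short steps. First, given $M \in \LStoneMT$, in particular $M \in \LCHMT$, so $\at(M) \in \LCHaus$ by \cref{dual equivalence of locally compact hausdorff spaces and MT}, and then the forward direction of \cref{lem: LStone} yields $\at(M) \in \LStone$, showing that $\at$ restricts. Second, given $X \in \LStone$, we have $X \in \LCHaus$, so $\P(X) \in \LCHMT$; by \cref{MT adjunction}, $\varepsilon : X \to \at(\P(X))$ is a homeomorphism, so $\at(\P(X)) \in \LStone$, and the backward direction of \cref{lem: LStone} then gives $\P(X) \in \LStoneMT$, showing that $\P$ restricts.

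Having established the restrictions on objects, the dual equivalence follows: the unit $\eta : M \to \P(\at(M))$ for $M \in \LStoneMT$ is already an MT-isomorphism (since $M$ is in particular a spatial object of $\LCHMT$ by \cref{LCHMT implies spatial}), and $\P(\at(M)) \in \LStoneMT$ by what we just showed, so $\eta$ is a morphism in $\LStoneMT$; similarly $\varepsilon$ restricts to a homeomorphism in $\LStone$. Since there is essentially no new calculation required, the only subtlety—really the only ``obstacle''—is to note explicitly that fullness of the subcategory inclusions is what allows us to transport the unit/co-unit from the ambient equivalence without any further verification.
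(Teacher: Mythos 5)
Your proposal is correct and follows the same route as the paper, which derives the theorem as an immediate consequence of \cref{lem: LStone} applied to the already-established dual equivalence between $\LCHMT$ and $\LCHaus$; you merely spell out the object-level checks and the restriction of the unit and co-unit that the paper leaves implicit.
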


Putting \cref{LStoneMT to LStoneFrm,LStoneMT to LStone} together, we arrive at the following diagram that commutes up to natural isomorphism: 

\begin{center}
      \begin{tikzcd}[row sep=2em]
      \LStoneMT \arrow[->]{rr}{\O}
      \ar[shift right=.5ex,"\at"',  shorten <= 7pt]{dr}
      & & \LStoneFrm 
      \ar[dl, "pt", shift left=.5ex, shorten <= 7pt]
      \\
      &\LStone 
      \ar[shift right=.5ex,"\P"', shorten >= 7pt]{ul} 
      \arrow[ur, "\Omega", shift left=.5ex,  shorten >= 7pt] &
      \end{tikzcd}
\end{center}

Let $\StoneMT$ be the full subcategory of $\LStoneMT$ consisting of compact MT-algebras and let $\StoneFrm$ be the full subcategory of $\LStoneFrm$ consisting of compact frames. Let also $\Stone$ be the full subcategory of $\LStone$ consisiting of compact spaces.
Observe that $\StoneMT$ is the full subcategory of $\KHMT$ consisting of zero-dimensional MT-algebras, that $\StoneFrm$ is the full subcategory of $\KRegFrm$ consisting of zero-dimensional frames, and that $\Stone$ is the full subcategory of $\KHaus$ consisting of zero-dimensional spaces.
Therefore, by combining \cref{dual equivalence of locally compact hausdorff spaces and MT,KReg KMT2,dual equivalence of compact hausdorff spaces and MT,LStoneMT to LStoneFrm,LStoneMT to LStone},  we obtain:

\begin{theorem} \label{StoneMT and StoneFrm}
$\StoneMT$ is equivalent to $\StoneFrm$ and dually equivalent to $\Stone$.
\end{theorem}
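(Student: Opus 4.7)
The plan is to verify that the equivalence $\O : \LStoneMT \to \LStoneFrm$ from \cref{LStoneMT to LStoneFrm} and the dual equivalence between $\LStoneMT$ and $\LStone$ from \cref{LStoneMT to LStone} each restrict to the full subcategories of compact objects, yielding both claimed (dual) equivalences simultaneously. Since $\StoneMT$, $\StoneFrm$, and $\Stone$ are by definition the full subcategories of $\LStoneMT$, $\LStoneFrm$, and $\LStone$ on the compact objects, no new morphisms need to be checked; it suffices to verify that each of the four functors $\O$, $\pt$, $\at$, $\P$ preserves compactness in the appropriate sense.

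For the equivalence $\O : \StoneMT \to \StoneFrm$, I would appeal to the fact used in the preamble to \cref{StKMT and StKFRm} that an MT-algebra $M$ is compact if and only if $\O(M)$ is a compact frame. This shows both that $\O$ restricts to a well-defined functor $\StoneMT \to \StoneFrm$ and, using essential surjectivity of $\O : \LStoneMT \to \LStoneFrm$, that every compact object in $\StoneFrm$ lifts to one in $\StoneMT$. Combined with fullness and faithfulness already proved in \cref{LStoneMT to LStoneFrm}, this gives the first equivalence.

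For the dual equivalence $\StoneMT \simeq \Stone^{\mathrm{op}}$, I would combine \cref{LStoneMT to LStone} with \cref{compactness of atoms}. Since every $M \in \LStoneMT$ lies in $\LCSobMT$, \cref{compactness of atoms} gives that $M$ is compact iff $\at(M)$ is compact; thus $\at$ restricts to $\StoneMT \to \Stone$. In the reverse direction, if $X \in \Stone$, then $X$ is compact as a topological space, so $\P(X) = (\P(X), \int)$ is a compact MT-algebra by the very definition of compactness in an MT-algebra (covers of $1 \in \P(X)$ by open elements are exactly open covers of $X$). Hence $\P$ restricts to $\Stone \to \StoneMT$. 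The unit and counit of the dual equivalence in \cref{LStoneMT to LStone} then restrict to natural isomorphisms between the identity functors on $\StoneMT$ and on $\Stone$, yielding the dual equivalence.

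There is no substantive obstacle; the only care needed is to assemble the bridging facts in the right order. The content that does the real work is the three-way equivalence of compactness for $M$, for $\O(M)$, and (in the locally compact sober setting) for $\at(M)$, all of which have been established earlier in the paper.
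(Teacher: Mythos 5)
Your proposal is correct and takes essentially the same route as the paper, which obtains the theorem by restricting the (dual) equivalences of \cref{LStoneMT to LStoneFrm,LStoneMT to LStone} to the full subcategories of compact objects (the paper additionally notes the equivalent alternative of restricting the compact Hausdorff results to zero-dimensional objects). The bridging facts you invoke---that $M$ is compact iff $\O(M)$ is compact, \cref{compactness of atoms}, and the preservation of compactness by $\P$---are precisely the ones the paper's citation chain relies on.
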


Thus, the following diagram commutes up to natural isomorphism: 
\begin{center}
      \begin{tikzcd}[row sep=2em]
      \StoneMT \arrow[->]{rr}{\O}
      \ar[shift right=.5ex,"\at"',  shorten <= 7pt]{dr}
      & & \StoneFrm 
      \ar[dl, "pt", shift left=.5ex, shorten <= 7pt]
      \\
      &\Stone 
      \ar[shift right=.5ex,"\P"', shorten >= 7pt]{ul} 
      \arrow[ur, "\Omega", shift left=.5ex,  shorten >= 7pt] &
      \end{tikzcd}
\end{center}

We conclude the paper by showing how Stone duality for boolean algebras fits in the above diagram. For this we recall the notion of the canonical extension of a boolean algebra \cite[p.~908]{jonsson1951boolean}: 

\begin{definition}
The {\em canonical extension} of a boolean algebra $B$ is a pair $(B^\sigma,e)$ such that $B^\sigma$ is a complete boolean algebra and $e:B \to B^\sigma$ is a boolean embedding such that 
the following holds 
\begin{enumerate}[ref=\thedefinition(\arabic*)]
\item each element of $B^\sigma$ is a join of meets of elements of $e[B]$.\label[definition]{dense} 
\item for each $S, T \subseteq B$, from $\bigwedge e[S] \leq \bigvee e[T]$ it follows that $\bigwedge S_0 \le \bigvee T_0$ for some finite $S_0 \subseteq S$ and $T_0 \subseteq T$.\label[definition]{compact}
\end{enumerate}
\end{definition}

We identify $B$ with its image $e[B]$ and view $B$ as a subalgebra of $B^\sigma$. 
It is well known \cite[p.~910]{jonsson1951boolean}
that the canonical extension of $B$ is unique, up to isomorphism, and that $B^\sigma$ can be constructed as the powerset of the set of ultrafilters of $B$. Other (choice-free and point-free) constructions of $B^\sigma$ can be found in \cite{GehrkeHarding2001,Holliday,bezhanishvili2023point}.

Following \cite[Def.~2.4]{BMM},
for a boolean algebra $B$, define $\square: B^{\sigma} \to B^{\sigma}$ by 
\[
\square a=\bigvee \{b \in B \mid b \leq a\}. \tag{\dag} \label{topo completion}
\] 
By \cite[Prop.~2.5]{BMM}, $(B^{\sigma},\square)\in \MT$
and $\diamond a= \bigwedge \{c \in B \mid a \leq c\}$.

\begin{lemma} \label{lem: BA to MT}
For a boolean algebra $B$ we have that $M:=(B^{\sigma},\square)$ is a Stone MT-algebra and $\CL(M)$ is isomorphic to $B$.
\end{lemma}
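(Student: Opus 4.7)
The plan is to verify directly from the two defining axioms of the canonical extension that $M$ is compact, is a $T_1$-algebra, is zero-dimensional, and is Hausdorff, identifying $\CL(M)$ with $B$ along the way.

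First I would identify $\CL(M)$ with $B$. The inclusion $B \subseteq \CL(M)$ is immediate: for $b \in B$, $b$ witnesses itself in the joins/meets defining $\square b$ and $\diamond b$, so $\square b = b = \diamond b$. For the converse, suppose $a \in \CL(M)$, so
\[
\bigvee\{b \in B \mid b \le a\} = \square a = a = \diamond a = \bigwedge\{c \in B \mid a \le c\}.
\]
In particular $\bigwedge\{c \in B \mid a \le c\} \le \bigvee\{b \in B \mid b \le a\}$, and applying \cref{compact} yields finite subsets with $a \le \bigwedge T_0 \le \bigvee S_0 \le a$, whence $a = \bigvee S_0 \in B$. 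Since $B$ is a boolean subalgebra of $B^\sigma$, this gives $\CL(M) \cong B$.

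Next, for compactness: every $s \in \O(M)$ satisfies $s = \square s = \bigvee\{b \in B \mid b \le s\}$, so a cover $1 = \bigvee S$ with $S \subseteq \O(M)$ refines to a cover $1 = \bigvee S'$ with $S' \subseteq B$; applying \cref{compact} to $\bigwedge \varnothing = 1 \le \bigvee S'$ produces a finite $S'_0 \subseteq S'$ with $\bigvee S'_0 = 1$, which refines in turn to a finite subcover drawn from $S$. For the $T_1$ axiom: by \cref{dense}, every element of $B^\sigma$ is a join of meets of elements of $B$, and any meet $x = \bigwedge T$ with $T \subseteq B$ satisfies $\diamond x = \bigwedge\{c \in B \mid x \le c\} \le \bigwedge T = x$, so $x \in \C(M)$. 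Hence $\C(M)$ join-generates $M$, so $M$ is $T_1$. Zero-dimensionality is then immediate from the identification of $\CL(M)$: for $a \in \O(M)$, $a = \square a = \bigvee\{b \in B \mid b \le a\} = \bigvee\{b \in \CL(M) \mid b \le a\}$.

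Finally, I would deduce Hausdorffness through the frame of opens. Since $M$ is $T_1$, and hence $T_{1/2}$ by \cref{T1 implies T1/2}, \cref{T_1/2} gives $M \cong \overline{B(\O(M))}$. By \cref{zero-dimensional}, $\O(M)$ is zero-dimensional, and it is compact since $M$ is. A compact zero-dimensional frame is a Stone frame, hence compact regular, so by \cref{regular frm to algebra} the MT-algebra $\overline{B(\O(M))}$ is regular; thus $M$ is regular, and \cref{T3 implies T2} yields $M$ Hausdorff. Combined with compactness and zero-dimensionality, this shows $M \in \StoneMT$. The main work lies in the identification $\CL(M) = B$, where the compactness axiom \cref{compact} of the canonical extension does the essential sandwiching; everything afterwards flows routinely from \cref{dense}, \cref{compact}, and the machinery already developed in the paper.
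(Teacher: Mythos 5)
Your proof is correct, and for most of the statement it follows the same path as the paper: the identification $\CL(M)=B$ via the sandwich $a=\bigwedge C_0\le\bigvee A_0=a$ coming from \cref{compact}, the compactness argument by refining an open cover to a cover from $B$, and zero-dimensionality read off from $\square a=\bigvee\{b\in B\mid b\le a\}$ are all exactly the paper's argument (you are in fact slightly more careful than the paper in passing from the finite subcover in $B$ back to a finite subcover drawn from $S$). Where you genuinely diverge is the Hausdorff axiom. The paper proves it directly: by \cref{dense} every element is a join of meets of elements of $B$, and any such meet $x=\bigwedge T$ with $T\subseteq B$ lies in $\GC(M)$ because $\diamond\square t=t$ for $t\in B$, so $\GC(M)$ join-generates $M$. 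You instead first establish $T_1$ directly (showing meets of elements of $B$ are closed — an observation the paper never makes explicit, since it gets $T_1$ as a consequence of Hausdorffness via \cref{T2 implies T1}), then route through $\O(M)$: $T_1\Rightarrow T_{1/2}$, so $M\cong\overline{B(\O(M))}$ by \cref{T_1/2}, and $\O(M)$ is a zero-dimensional frame by \cref{zero-dimensional}, hence regular, so \cref{regular frm to algebra} and \cref{T3 implies T2} give Hausdorffness. This is valid, and it buys you the stronger conclusion that $M$ is regular; its only cost is reliance on the standard fact that zero-dimensional frames are regular, which the paper uses implicitly elsewhere but never proves — you should supply the one-line justification that a complemented $c\le a$ satisfies $c^*\vee a\ge c^*\vee c=1$, i.e.\ $c$ is well-inside $a$. (Note that compactness plays no role here; zero-dimensionality alone suffices for regularity.) The paper's direct $\GC(M)$ argument is shorter and self-contained, while yours better isolates which separation properties come from which axiom of the canonical extension.
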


\begin{proof}
We identify $B$ with its image in $B^{\sigma}$ and show that $\CL(M)=B$. Since for $a \in B$, we have $\square a = a = \diamond a$, we see that $B \subseteq \CL(M)$. For the reverse inclusion, let $a \in \CL(M)$. Then 
\[
a=\square a = \bigvee A \quad \mbox{and} \quad a =\diamond a = \bigwedge C,
\] 
where $A=\{b \in B \mid b \leq a\}$ and $C= \{c \in B \mid a \leq c\}$. By \cref{compact}, 
there are finite $A_0 \subseteq A$ and $C_0 \subseteq C$ such that 
$a=\bigwedge C \leq \bigwedge C_0 \le \bigvee A_0\leq \bigvee A=a$. Thus, $a \in B$. 

We next show that $M$ is a Stone MT-algebra. 
To see that $M$ is compact, let $1=\bigvee S$ where $S \subseteq \O(M)$. Since each element of $S$ is a join from $B$, we obtain that $1=\bigvee T$ for some $T\subseteq B$. By \cref{compact}, $1=\bigvee T_0$ for some finite $T_0 \subseteq T$.
Therefore, $M$ is compact. We show that $M$ is Hausdorff. By \cref{dense},
each $a \in M$ is a join of meets of elements of $B$.
Since $\diamond \square b=b$ for each $b \in B$, we see that $a$ is a join of elements of $\GC(M)$. 
Thus, $M$ is Hausdorff. Finally, $M$ is zero-dimensional because $\CL(M)=B$ and each element of $\O(M)$ is a join of elements of $B$.
\end{proof}

Let $\BA$ be the category of boolean algebras and boolean homomorphisms. Associating with each MT-algebra $M$ the boolean algebra $\CL(M)$ of clopen elements of $M$ and with each MT-morphism $f: M\to N$ its restriction $f| _{\CL(M)} : \CL(M)\to\CL(N)$ defines a covariant functor $\CL:\StoneMT \to \BA$.

\begin{theorem}\label{StoneMT and BA}
$\CL:\StoneMT \to \BA$ is an equivalence.
\end{theorem}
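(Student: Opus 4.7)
The plan is to verify that $\CL$ is essentially surjective, faithful, and full. Essential surjectivity is immediate from \cref{lem: BA to MT}: for each $B \in \BA$, the canonical extension $(B^\sigma, \square)$ lies in $\StoneMT$ and satisfies $\CL(B^\sigma,\square) \cong B$. That $\CL$ is well-defined on morphisms is straightforward, since any MT-morphism is a boolean homomorphism whose restriction to opens is a frame homomorphism (\cref{restriction to frame homomorphism}), and hence it sends closed elements to closed elements and open elements to open elements, so clopens to clopens.

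The key observation for the remaining two conditions is that each $M \in \StoneMT$ is canonically isomorphic to $\CL(M)^\sigma$. Since $\StoneMT$ is a full subcategory of $\LCHMT$, \cref{LCHMT implies spatial} gives that $M$ is spatial, so $\eta_M : M \to \P(\at(M))$ is an isomorphism; \cref{clopens in M and at(M)} restricts this to a boolean isomorphism $\CL(M) \cong \CLP(\at(M))$. The space $\at(M)$ is Stone by \cref{StoneMT and StoneFrm}, so classical Stone duality yields $\at(M) \cong \Uf(\CLP(\at(M)))$, whence $M \cong \P(\Uf(\CL(M))) \cong \CL(M)^\sigma$. From this together with \cref{dense}, every element of $M$ is a join of meets of clopens. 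Faithfulness now follows immediately: if $f,g : M \to N$ are MT-morphisms that agree on $\CL(M)$, they agree on all joins of meets of clopens, hence on all of $M$, because both preserve arbitrary joins and meets as complete boolean homomorphisms.

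For fullness, given a boolean homomorphism $h : \CL(M) \to \CL(N)$, I would extend it to an MT-morphism $\tilde h : M \to N$ via the canonical extension, concretely realized as $\P(\Uf(h))$ after the identifications $M = \CL(M)^\sigma$ and $N = \CL(N)^\sigma$. Then $\tilde h$ is a complete boolean homomorphism whose restriction to $\CL(M)$ is $h$. To verify that $\tilde h$ is an MT-morphism, I would use that for each $a \in M$, zero-dimensionality applied to $\square a \in \O(M)$ gives
\[
\square a = \bigvee \{ c \in \CL(M) \mid c \leq \square a \} = \bigvee \{ c \in \CL(M) \mid c \leq a \},
\]
so $\tilde h(\square a) = \bigvee \{ h(c) \mid c \in \CL(M),\ c \leq a \}$; each $h(c)$ is clopen and satisfies $h(c) \leq \tilde h(a)$, so $h(c) = \square h(c) \leq \square \tilde h(a)$, yielding $\tilde h(\square a) \leq \square \tilde h(a)$. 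The main step to watch out for is the bookkeeping of the identification $M \cong \CL(M)^\sigma$ and the verification that the canonical extension of a boolean homomorphism automatically respects the MT-structure; the substantive content has already been built up in \cref{lem: BA to MT} together with the Stone structure of $\at(M)$ for $M \in \StoneMT$.
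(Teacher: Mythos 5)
Your proof is correct, and it follows the same overall decomposition as the paper (essential surjectivity via \cref{lem: BA to MT}, then faithfulness, then fullness), but the middle and final steps are routed differently. For faithfulness, the paper works directly inside $M$: it uses the $T_1$ property to write $a=\bigvee C$ with $C\subseteq\C(M)$ and \cref{CZD} to write each closed element as a meet of clopens, then pushes $f$ and $g$ through these joins and meets. You instead first establish the identification $M\cong\P(\Uf(\CL(M)))\cong\CL(M)^\sigma$ via spatiality, \cref{clopens in M and at(M)}, and classical Stone duality, and then invoke the density property \cref{dense} of canonical extensions; the two arguments land on the same fact (every element is a join of meets of clopens) and the completeness argument is identical. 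For fullness, the paper cites J\'onsson--Tarski for the existence of the complete extension $h^\sigma$ and an external result of [BMM] for the fact that $h^\sigma$ is an MT-morphism, whereas you realize the extension concretely as $\P(\Uf(h))$ and verify the inequality $\tilde h(\square a)\leq\square\tilde h(a)$ by hand, using the (correct) observation that $\{c\in\CL(M)\mid c\leq\square a\}=\{c\in\CL(M)\mid c\leq a\}$. Your version is more self-contained and makes transparent exactly where zero-dimensionality enters; the paper's version is shorter but leans on canonical-extension machinery from the literature. The only point you should spell out fully is the one you flag yourself: that the boolean isomorphism $M\cong\P(\Uf(\CL(M)))$ carries $\square$ to the interior operator of the Stone topology (equivalently, to the operator of (\ref{topo completion})), which is a routine but necessary check for both your faithfulness and fullness arguments.
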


\begin{proof}
Let $B \in \BA$. By \cref{lem: BA to MT}, 
$M:=(B^{\sigma},\square)\in \StoneMT$ and $\CL(M) \cong B$. 
Therefore, $\CL$ is essentially surjective. To see that $\CL$ is faithful, 
let $f,g :M\to N$ be MT-morphisms between $M,N\in\StoneMT$ such that
$f|_{\CL(M)}= g|_{\CL(M)}$. We show that $f=g$.
    Let $a \in M$. 
        Since $M$ is a $T_1$-algebra, $a=\bigvee C$ for some $C\subseteq\C(M)$. Thus, 
    \begin{align*}
    f(a) &= f\left(\bigvee C\right) = \bigvee f[C] \\ 
            &= \bigvee \left\{f\left(\bigwedge \{b \in \CL(M) \mid b \ge c \right) \biggm| c \in C \right\}&\text{by \cref{CZD}} \\
    &= \bigvee \bigwedge \left\{ f(b) \mid b \in \CL(M), b \ge c, c \in C \right\}  \\
    &= \bigvee \bigwedge \{g(b) \mid  b \in \CL(M), b \ge c, c \in C \} \\
    &= \bigvee \left\{ g\left(\bigwedge \{b \in \CL(M) \mid  b \ge c \}\right) \biggm| c \in C \right\} &\text{by \cref{CZD}} \\
    &= \bigvee g[C] = g\left(\bigvee C\right) \\         &= g(a),
    \end{align*} 
        yielding that $\CL$  is faithful. 
        It is left to show that $\CL$ is full. Let $h:A \to B$ be a boolean homomorphism. We identify $A$ and $B$ with their images in $A^{\sigma}$ and $B^{\sigma}$, respectively. By \cite[Thm.~2.16]{jonsson1951boolean}, there is a complete boolean homomorphism $h^{\sigma}:A^{\sigma} \to B^{\sigma}$ such that $h$ is the restriction of $h^\sigma$ to $A$. Let $M=(A^{\sigma},\square_A)$ and $N= (B^{\sigma},\square_B)$ be the corresponding Stone MT-algebras (see \cref{lem: BA to MT}). By \cite[Thm.~2.7(2)]{BMM}, $h^{\sigma}:M \to N$ is an MT-morphism. 
            Consequently, $\CL$ is an equivalence.
\end{proof}

\begin{remark}
A quasi-inverse of 
$\CL:\StoneMT\to\BA$ 
is the functor $(-)^\sigma:\BA\to\StoneMT$, which associates with each boolean algebra $B$ the Stone MT-algebra $M=(B^\sigma,\square)$ 
and with each boolean homomorphism $h : B_1 \to B_2$ the MT-morphism $h^\sigma : M_1 \to M_2$.
\end{remark}

Putting \cref{StoneMT and StoneFrm,StoneMT and BA} together yields: 

\begin{corollary}[Stone duality]
$\BA$ is dually equivalent to $\Stone$.
\end{corollary}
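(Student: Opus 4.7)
The plan is to obtain Stone duality purely by composing the two equivalences already established in this section. By \cref{StoneMT and BA}, the clopen-element functor $\CL:\StoneMT\to\BA$ is an equivalence of categories, with quasi-inverse $(-)^\sigma:\BA\to\StoneMT$ as noted in the remark following that theorem. By \cref{StoneMT and StoneFrm}, the functors $\at:\StoneMT\to\Stone$ and $\P:\Stone\to\StoneMT$ give a dual equivalence between $\StoneMT$ and $\Stone$. Since composing an equivalence with a dual equivalence yields a dual equivalence, I would simply write down the composites $\at\circ(-)^\sigma:\BA\to\Stone$ and $\CL\circ\P:\Stone\to\BA$ and invoke the two cited theorems to conclude.

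Concretely, the one-line proof would read: apply \cref{StoneMT and StoneFrm,StoneMT and BA}. There is no real obstacle here, since all the work has already been done, in particular the construction $B\mapsto(B^\sigma,\square)$ of \cref{lem: BA to MT} is what reconnects a boolean algebra to its Stone MT-algebra, and the dual equivalence $\at\dashv\P$ of \cref{dual equivalence of compact hausdorff spaces and MT} (restricted in \cref{StoneMT and StoneFrm}) supplies the passage to topological spaces. If one wanted to make the composition explicit, the functor $\BA\to\Stone$ sends a boolean algebra $B$ to $\at(B^\sigma)$, which is naturally homeomorphic to the Stone space of ultrafilters of $B$, recovering the classical Stone duality.
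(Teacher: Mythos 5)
Your proposal is correct and matches the paper's own proof, which likewise obtains the corollary by composing the equivalence $\CL:\StoneMT\to\BA$ of \cref{StoneMT and BA} with the dual equivalence between $\StoneMT$ and $\Stone$ from \cref{StoneMT and StoneFrm}. The explicit identification of the composite $\at\circ(-)^\sigma$ with the classical ultrafilter-space functor is a nice addition but not needed for the statement.
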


We thus arrive at the following diagram that commutes up to natural isomorphism: 

\begin{center}
      \begin{tikzcd}[row sep=4em]
      \StoneMT \arrow[->]{rr}{\O}
      \arrow[drr, shift right=.5ex, "\at"', shorten <= 7pt]
      \ar[d, shift right=.5ex, "\CL"']
      \ar[d, <-, shift left=.5ex, "(-)^\sigma"]& & \StoneFrm
      \ar[d, "\pt", shift left=.5ex]\\
       \BA \ar[rr, shift right=.5ex, ->, "\Uf"']
       \ar[rr, shift left=.5ex, <-, "\CLP"]  &&\Stone 
       \arrow[shift right=.5ex,"\P"' , shorten >= 7pt]{ull}
       \arrow[u, "\Omega", shift left=.5ex]
      \end{tikzcd}
\end{center}

In the above diagram, $\CLP$ and $\Uf$ are the contravariant functors of Stone duality, so $\Uf$ maps a boolean algebra $B$ to its space of ultrafilters.
We already saw that the upper triangle of the diagram commutes up to natural isomoprhism.
To see that the lower triangle commutes, observe that 
$\P \circ \Uf \cong (-)^\sigma$ (since the canonical extension is isomorphic to
the powerset of the set of ultrafilters)
and $\CLP \circ \at \cong \CL$ (by \cref{clopens in M and at(M),StoneMT and StoneFrm}).

We conclude with the diagram that summarizes all equivalences and dual equivalences discussed in this paper. The notation `$A \hookrightarrow B$' stands for `$A$ is a full subcategory of $B$,'
`$A \longleftrightarrow B$'  
for `$A$ is equivalent to $B$,' and `$A \mathrel{\tikz[baseline]{\draw[<->](0,.58ex)-- node[label={[label distance=-5pt]$d$}]{} (.7,.58ex)}}B$' 
for `$A$ is dually equivalent to $B$.'\\

\begin{center}
\begin{tikzcd}[ampersand replacement=\&,row sep={30,between origins}, column sep={50,between origins}]
 \LCSobSMT \ar[hookleftarrow]{dd} \ar[<->]{rr}    
    \&\& \CFrm \ar[hookleftarrow]{dd}\ar[<->,"d"]{rr}
    \&\& \LCSob \ar[hookleftarrow]{dd}  \\\\
 \StLCSMT \ar[hookleftarrow]{dr} \ar[hookleftarrow]{dd}\ar[<->]{rr}
    \&\& \StCFrm \ar[hookleftarrow]{dr} \ar[hookleftarrow]{dd}\ar[<->,"d"]{rr}
    \&\& \StLCSob \ar[hookleftarrow]{dr} \ar[hookleftarrow]{dd} \\
 \& \StKSMT \ar[<->, crossing over]{rr}
    \&\& \StKFrm \ar[<->, crossing over, "d"]{rr}
    \&\& \StKSob \ar[hookleftarrow]{dd} \\
 \LCHMT \ar[hookleftarrow]{dr}\ar[<->]{rr}
    \&\& \CRegFrm \ar[hookleftarrow]{dr}\ar[<->,"d"]{rr}
    \&\& \LCHaus \ar[hookleftarrow]{dr} \\
 \& \KHMT \ar[hookleftarrow]{dd}\ar[<->]{rr}
    \&\& \KRegFrm \ar[hookleftarrow]{dd}\ar[<->,"d"]{rr}
    \&\& \KHaus \ar[hookleftarrow]{dd}  \\\\
 \& \StoneMT \ar[<->]{rr}
    \&\& \StoneFrm\ar[<->,"d"]{rr}
    \&\& \Stone
\ar[hookleftarrow, crossing over, from=4-2, to=6-2]
\ar[hookleftarrow, crossing over, from=4-4, to=6-4]
\end{tikzcd}
\end{center}

In the tables below we summarize the categories of frames, topological spaces, and MT-algebras considered in this paper.

\begin{table}[H]
    \begin{tabular}{lll}
        \toprule
        \bf Category & \bf Objects & \bf Morphisms \\ \midrule
        \Frm & Frames & Frame homomorphisms\\
        \SFrm& Spatial frames & Frame homomorphisms\\
        \CFrm  & Continuous frames        & Proper frame homomorphisms \\
        \StCFrm & Stably continuous frames & Proper frame homomorphisms \\
        \StKFrm & Stably compact frames    & Proper frame homomorphisms \\
        \CRegFrm & Continuous regular frames    & Proper frame homomorphisms \\
        \KRegFrm & Compact regular frames    & Frame homomorphisms \\
        \LStoneFrm & Locally Stone frames    & Proper frame homomorphisms \\
        \StoneFrm & Stone frames    & Frame homomorphisms \\
 \bottomrule
    \end{tabular}
    \caption{Categories of frames.\label{table:frames}}
\end{table}

\begin{table}[H]
\begin{tabular}{lll}
    \toprule 
    \bf Category & \bf Objects & \bf Morphisms \\ \midrule
    \Top & Topological spaces & Continuous maps\\
    \Sob & Sober spaces & Continuous maps\\
    \LCSob & Locally compact sober spaces  & Proper maps \\
    \StLCSob & Stably locally compact spaces & Proper maps \\
    \StKSob  & Stably compact spaces         & Proper maps \\ 
    \LCHaus  & Locally compact Hausdorff spaces         & Proper maps \\ 
    \KHaus  & Compact Hausdorff spaces         & Continuous maps \\
    \LStone  & Locally Stone spaces         & Proper maps \\  
    \Stone  & Stone spaces         & Continuous maps \\ 
    \bottomrule
\end{tabular}
\caption{Categories of spaces.\label{table:spaces}}
\end{table}

\begin{table}[H]
\begin{tabular}{lll}
    \toprule 
    \bf Category & \bf Objects & \bf Morphisms \\ \midrule
    \MT & MT-algebras & MT-morphisms\\
    \SMT & Spatial MT-algebras & MT-morphisms\\
    \SobMT & Sober MT-algebras & MT-morphisms\\
    \LCSobMT & Locally compact sober MT-algebras  & Proper MT-morphisms \\
    \StLCMT & Stably locally compact MT-algebras & Proper MT-morphisms \\
    \StKMT  & Stably compact MT-algebras        & Proper MT-morphisms \\ 
    \LCHMT & Locally compact Hausdorff MT-algebras & Proper MT-morphisms\\
    \KHMT & Compact Hausdorff MT-algebras & MT-morphisms\\
    \LStoneMT & Locally Stone MT-algebras & Proper MT-morphisms\\
    \StoneMT & Stone MT-algebras & MT-morphisms\\  
    \bottomrule
\end{tabular}
\caption{Categories of MT-algebras.\label{table:algebras}}
\end{table}

We note that the dualities established in this paper that involve proper maps have obvious counterparts, where proper frame homomorphisms are replaced with frame homomorphisms, proper maps with continuous maps, and proper MT-morphisms with MT-morphisms.  

\bibliographystyle{alpha-init}
\bibliography{references}

\end{document}